\theoremstyle{plain}
\newtheorem{theorem}{Theorem}[section]
\newtheorem{corollary}[theorem]{Corollary}
\newtheorem{lemma}[theorem]{Lemma}
\newtheorem{proposition}[theorem]{Proposition}
\theoremstyle{definition}
\newtheorem{definition}[theorem]{Definition}
\newtheorem*{question*}{Question}
\newtheorem*{questions*}{Questions}
\newtheorem*{theorem*}{Theorem}
\newtheorem*{corollary*}{Corollary}
\theoremstyle{remark}
\newtheorem{remark}[theorem]{Remark}
\newtheorem{example}[theorem]{Example}
\newcommand\blfootnote[1]{%
  \begingroup
  \renewcommand\thefootnote{}\footnote{#1}%
  \addtocounter{footnote}{-1}%
  \endgroup}
\newcommand{\wt}{\widetilde}
\newcommand{\xr}{\xrightarrow}
\newcommand{\td}[1]{\tilde{#1}}
\newcommand{\into}{\hookrightarrow}
\newcommand{\Z}{\mathbb{Z}}
\newcommand{\R}{\mathbb{R}}
\newcommand{\C}{\mathbb{C}}
\newcommand{\bd}{\partial}
\newcommand{\mc}[1]{\mathcal{#1}}
\newcommand{\ms}[1]{\mathscr{#1}}
\newcommand{\Ext}{\text{Ext}}
\newcommand{\mf}{\mathfrak}
\newcommand{\e}{\mathbf{e}}
\newcommand{\Aut}{\operatorname{Aut}}
\newcommand{\Hom}{\operatorname{Hom}}
\newcommand{\ran}{\operatorname{ran}}
\begin{document}

\title{Unitary equivalence of normal matrices over topological spaces} 
\author{Greg Friedman and Efton Park}
\date\today
\maketitle

\blfootnote{The first author was partially supported by National Science Foundation Grant DMS-1308306 and Simons Foundation Grant \#209127.}
\blfootnote{\textbf{2010 Mathematics Subject Classification:} Primary: 47B15, 55S35; Secondary: 55R40, 15A18}
\blfootnote{\textbf{Keywords:} normal matrices, unitary equivalence, obstruction theory, characteristic classes, eigenvalues, eigenvectors}

\begin{abstract}
Let $A$ and $B$ be normal matrices with coefficients that are continuous complex-valued functions on a topological space $X$ 
 that has the homotopy type of a CW complex, and suppose these matrices have the same distinct eigenvalues at each point of $X$.  
We use obstruction theory to establish a necessary and sufficient condition for $A$ and $B$ to be unitarily equivalent. We also determine
bounds on the number of possible unitary equivalence classes in terms of cohomological invariants of $X$. 
\end{abstract}

\tableofcontents

\section{Introduction}
One of the most striking theorems in linear algebra is the spectral theorem: every normal matrix with
complex entries is diagonalizable.  An immediate consequence of the spectral theorem is that a 
normal matrix over $\mathbb{C}$ is determined up to unitary equivalence by its eigenvalues, counting multiplicities.  

Given the importance of the spectral theorem, it is natural to ask  whether it holds in more general situations.  
Suppose $X$ is a topological space.  Let $C(X)$ denote the 
$\mathbb{C}$-algebra of complex-valued continuous functions on $X$, and let $M_n(C(X))$ be the ring
of $n$-by-$n$ matrices with entries in $C(X)$.  By a slight abuse of terminology, we will refer to elements of
$M_n(C(X))$ as matrices over $X$.  Given $A$ in $M_n(C(X))$ and $x$ in $X$, we can evaluate at $x$
to obtain an element $A(x)$ of $M_n(\mathbb{C})$.  Define the adjoint of $A$ pointwise: $A^*(x) = (A(x))^*$.  
We can define normal matrices in $M_n(C(X))$ as those matrices that commute with their adjoint,
and we can also consider the set $U_n(C(X))$ of unitary matrices; that is, the set of matrices $U$ 
in $M_n(C(X))$ with the property that $UU^* = U^*U = I$.  Then two matrices $A,B\in M_n(C(X))$ are unitarily equivalent if there exists such a $U\in U_n(C(X))$ such that $B=U^*AU$, i.e. if $B(x)=U^*(x)A(x)U(x)$ for all $x\in X$. One can then ask the following question: 

\begin{question*}
Given a topological space $X$, what are the unitary equivalence classes of normal matrices in $M_n(C(X))$? In particular, is every such matrix diagonalizable, in which case there is only one equivalence class for each $n$?
\end{question*}

The question of diagonalizability has been considered before by previous authors. In \cite{Kad2}, R. Kadison gave an example of a
normal element of $M_2(C(S^4))$ that is 
not diagonalizable.   In \cite{GP}, K. Grove and G. K. Pedersen considered diagonalizability of matrices over compact Hausdorff spaces more generally.  
In that paper, they determined which compact Hausdorff spaces $X$ have the property
that every normal matrix over $X$ is diagonalizable.  Such topological spaces $X$ are rather exotic; for example,
no infinite first countable compact Hausdorff space has this property.   

The following simple example, which is a modification of Example 1.1 in \cite{GP}, illustrates one of the main obstructions to diagonalizability over more reasonable spaces.  Let $X$ be  $\mathbb{R}$ in its usual topology, and define
\[
A(x) = 
\begin{cases} 
\begin{pmatrix} x & 0 \\ 0 & 0 \end{pmatrix} \quad x \leq 0\\
\\
\begin{pmatrix} x & x \\ x & x \end{pmatrix} \quad x \geq 0.
\end{cases}
\]
The matrix $A(x)$ is normal for every real number $x$.  However, a direct calculation shows that there is no element $U$ in $U_2(C(\mathbb{R}))$ with the 
property that $U^*(x)A(x)U(x)$ is diagonal for all $x$.  Indeed, if such a $U$ existed, one of its columns would provided a continuously varying family of
eigenspaces associated to the eigenvalue $0$, and a close examination of $A$ shows that such a family cannot exist.  The real line $\mathbb{R}$ is contractible, so we see that lack of diagonalizability in this case cannot be detected by algebraic topological invariants.  Rather, the issue is that the multiplicity of the eigenvalue $0$ jumps at the origin.

By contrast, we will see in this paper that algebraic topology does, somewhat surprisingly given the analytic/algebraic nature of the problem, have something to say 
if we restrict our attention to
 \emph{multiplicity-free} normal matrices. A matrix $A\in M_n(C(X))$ is multiplicity free if, for each  $x$ in $X$, the eigenvalues of $A(x)$ are distinct. Grove and Pedersen showed that such matrices can be guaranteed to be diagonalizable over less exotic classes of spaces than those that are required for diagonalizability of all normal matrices. 
In fact, they proved \cite[Theorem 1.4]{GP} that if $X$ is a $2$-connected compact CW-complex, then every normal multiplicity-free matrix over $X$ is diagonalizable.
They also gave examples to show that the spectral theorem fails in general for multiplicity-free normal matrices over CW complexes that are not $2$-connected. 

Given this failure of diagonalizability, in general, even for multiplicity-free normal matrices,  we can  return to the more general part of our question, now restricted to multiplicity-free normal matrices,  and ask what we can say about the unitary equivalence classes\footnote{One immediate observation is that in order for $A, B\in M_n(C(X))$ to be unitarily equivalent, they must be unitarily equivalent over every $x\in X$ and so must have the same eigenvalues at every $x\in X$. In fact, $A$ and $B$ must have the same characteristic polynomials in $C(X)[\lambda]$, the ring of polynomials with coefficients in $C(X)$. It follows that no multiplicity-free normal matrix can be unitarily equivalent to a matrix that is not multiplicity free, and so multiplicity-free and non-multiplicity-free matrices really can be studied independently.}. 
As the above examples and results already demonstrate, and as will be borne out below, the multiplicity-free normal matrices provide a tractable class for exploration with a rich theory even on the reasonable class of spaces homotopy equivalent to CW complexes. In this setting, we will see that algebraic topology can be used as a tool to provide some answers to the following questions:

\paragraph{Questions:} 

\begin{enumerate}
\item Given two multiplicity-free normal matrices $A,B\in M_n(C(X))$, are $A$ and $B$ unitarily equivalent?

\item What can we say about the number of unitary equivalence classes of multiplicity-free normal matrices in $M_n(C(X))$?
\end{enumerate}

Our approach to these questions utilizes the algebraic topology notion of obstruction theory.  We begin by constructing a fiber bundle that encodes unitary equivalence information for matrices with
complex entries; i.e., matrices over a point.  Then, given normal multiplicity free matrices $A,B\in M_n(C(X))$  that have the same characteristic polynomial,
we associate to the matrices a continuous map from $X$ into the base of the fiber bundle, and we prove that $A$ and $B$ are unitarily equivalent if and only if
this map lifts to the total space.  We next construct a cohomology class $[\theta(A, B)]$ that lives in $H^2(X; \Pi_{A,B})$, where $\Pi_{A,B}$ is a system
of local coefficients determined by the monodromy of the eigenvalues of $A$ and $B$. This system $\Pi_{A,B}$  has fiber $\Z^n$, and the action of $[\gamma]\in \pi_1(X)$ permutes the $\Z$ factors according to the monodromy of the common eigenvalues of $A$ and $B$ as we travel around a loop representing $[\gamma]$. 
The cohomology class $[\theta(A, B)]$ is the complete obstruction to $A$ and $B$ being
unitarily equivalent. Specifically, we prove the following theorem (see Theorem \ref{T: U equiv} and Proposition \ref{P: nonCW ob}):

\begin{theorem*}
Let $X$ be (homotopy equivalent to) a CW complex, and let $A$ and $B$ be normal multiplicity-free matrices
in $M_n(C(X))$ that have the same characteristic polynomial.  Then there exists a  unique cohomology class 
$[\theta(A,B)] \in H^2(X; \Pi_{A,B})$ such that $A$ and $B$ are unitarily equivalent if and only if $[\theta(A,B)]=0$. 
\end{theorem*}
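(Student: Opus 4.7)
The plan is to reformulate unitary equivalence as a section problem for a fiber bundle over $X$ and then apply primary obstruction theory. First I would construct a bundle $\mathcal{U} \to X$ whose fiber over $x$ is $\mathcal{U}_x = \{U \in U(n) : U^* A(x) U = B(x)\}$. By the pointwise spectral theorem, and because $A(x)$ and $B(x)$ are multiplicity-free with the same characteristic polynomial, the fiber $\mathcal{U}_x$ is nonempty and is a right torsor for the commutant of $A(x)$ in $U(n)$; for multiplicity-free normal $A(x)$ this commutant is precisely the maximal torus $T_{A(x)}$ of unitaries diagonal in any eigenbasis of $A(x)$. Continuous local eigenframes for $A$, which exist near every point by multiplicity-freeness together with standard perturbation theory for simple eigenvalues, then trivialize $\mathcal{U}$ locally and exhibit it as a fiber bundle with fiber homeomorphic to the $n$-torus $T^n$. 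Tautologically, a global section of $\mathcal{U}$ is exactly a unitary $U \in U_n(C(X))$ with $B = U^* A U$.

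With the lifting problem set up, I would invoke primary obstruction theory for sections of $\mathcal{U} \to X$. Since $\pi_0(T^n) = 0$, $\pi_1(T^n) = \mathbb{Z}^n$, and $\pi_k(T^n) = 0$ for $k \geq 2$, a cellular section can be built freely on the $0$- and $1$-skeleta of $X$; the first possibly-nontrivial obstruction to extension over the $2$-skeleton is a single primary class in $H^2(X; \Pi_\mathcal{U})$, where $\Pi_\mathcal{U}$ is the local coefficient system assigning to each $x$ the group $\pi_1(\mathcal{U}_x) = \pi_1(T_{A(x)}) \cong \mathbb{Z}^n$; and all higher obstructions vanish automatically. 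The standard theory of primary obstructions ensures this class is well-defined, independent of the section chosen over the $1$-skeleton, and its vanishing is equivalent to the existence of a global section of $\mathcal{U}$.

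The key remaining step is the identification $\Pi_\mathcal{U} \cong \Pi_{A,B}$. I would argue this by tracking monodromy: once a local continuous eigenframe of $A$ near a basepoint $x_0$ identifies $T_{A(x_0)}$ with the standard $T^n$, parallel transport of this identification around a loop $\gamma$ in $X$ returns it to $T^n$ with the circle factors permuted exactly according to how the eigenvalues of $A$ permute along $\gamma$. Since this permutation action is, by definition, the $\pi_1(X)$-action on $\mathbb{Z}^n$ giving $\Pi_{A,B}$, the two local systems coincide, and so the complete obstruction lives in $H^2(X; \Pi_{A,B})$.

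The hard part will be making the monodromy identification completely rigorous: one must verify that the local trivializations of $\mathcal{U}$ can be chosen compatibly enough that the induced transition $1$-cocycle on $T^n$ encodes only the eigenvalue permutation and no extraneous twisting coming from phase ambiguities in the chosen eigenframes. Once that is settled, extending from honest CW complexes to spaces of the homotopy type of a CW complex is routine: pull the entire construction back along a homotopy equivalence, invoking naturality of the bundle in $X$ together with the homotopy invariance of cohomology with local coefficients to transfer both the class $[\theta(A,B)]$ and the equivalence between its vanishing and the unitary equivalence of $A$ and $B$.
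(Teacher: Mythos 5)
Your approach is correct and lands on essentially the same argument as the paper: reformulate unitary equivalence as a section problem for a $T^n$-bundle over $X$ and take the primary obstruction to extending a section from the $1$-skeleton to the $2$-skeleton; since $\pi_k(T^n)=0$ for $k\geq 2$, this is the only obstruction, and $1$-simplicity of the fiber gives a well-defined class in cohomology with local coefficients in $\pi_1$ of the fibers. The genuine difference is that you build the bundle $\mathcal U$ directly over $X$, whereas the paper first constructs a universal $T^n$-bundle $p:E_n\to B_n$ over a space $B_n$ of triples (projection sets plus a bijection), proves local triviality by hand via an explicit metric on $B_n$ and a technical projection lemma, defines a classifying map $\Phi_{A,B}:X\to B_n$, and then works with the pullback $\Phi_{A,B}^*E_n$. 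Your $\mathcal U$ is exactly $\Phi_{A,B}^*E_n$, so for this one theorem you save the global construction; what the paper buys with $E_n\to B_n$ is a fixed universal object through which the naturality and comparison results of Sections~4 and~6 can be routed, which is where the obstruction relations $m_{A,B,C*}([\theta(A,B)],[\theta(B,C)])=[\theta(A,C)]$ and the non-CW extension actually get proved. Also worth flagging: what you describe as the ``hard part'' --- identifying $\Pi_{\mathcal U}$ with $\Pi_{A,B}$ by tracking eigenvalue monodromy --- is not needed for this theorem at all. In the paper $\Pi_{A,B}$ is \emph{defined} to be the bundle of groups with fiber $\pi_1(F_x)$ for $F_x$ the fibers of $\Phi_{A,B}^*E_n$, so $\Pi_{\mathcal U}=\Pi_{A,B}$ is a tautology. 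The computation showing that this local system is isomorphic to $\Z^n$ twisted by the eigenvalue-permutation representation is a separate result (Proposition~\ref{clean}), proved later and used for explicit calculations, not for Theorem~\ref{T: U equiv} itself. One small slip: $\mathcal U_x$ is a \emph{left} torsor under the unitary commutant of $A(x)$ (equivalently a right torsor under the commutant of $B(x)$), not a right torsor under the commutant of $A(x)$; since both commutants are maximal tori this does not affect the argument.
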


An immediate consequence of our theorem, which  is  not at all obvious from a strictly operator-theoretic perspective, is that if $X$ contains no $2$-cells and $A$ and $B$ are normal multiplicity
free matrices over $X$, then $A$ and $B$ are unitarily equivalent if and only if they have the same characteristic polynomial.
Another fairly direct consequence is a generalization of Grove and Pedersen's \cite[Theorem 1.4]{GP}; this is the theorem that states that if $X$ is a $2$-connected compact 
CW complex then any multiplicity-free normal matrix $A$ in $M_n(C(X))$ can be diagonalized. The following is our Corollary \ref{C: GP cor}:

\begin{corollary*}
Suppose that $X$ is a simply-connected (not necessarily compact) CW complex and that $\Hom(H_2(X),\Z)=0$ (in particular, when $H_2(X)$ is torsion).
Then any two normal multiplicity-free matrices $A$ and $B$ in $M_n(C(X))$ with the same eigenvalues at each point are unitarily equivalent. 
In particular, any normal multiplicity-free matrix in $M_n(C(X))$ is diagonalizable.
\end{corollary*}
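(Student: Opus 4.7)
The plan is to apply the main theorem and show that the obstruction class $[\theta(A,B)]$ must vanish under the given hypotheses, by computing the relevant cohomology group.

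First I would observe that since $X$ is simply-connected, $\pi_1(X) = 0$, and so the monodromy action on the eigenvalues of $A$ and $B$ is trivial. Consequently the local coefficient system $\Pi_{A,B}$ is the constant system with fiber $\Z^n$, and we have $H^2(X; \Pi_{A,B}) \cong H^2(X; \Z^n) \cong H^2(X;\Z)^n$. By the universal coefficient theorem,
\[
H^2(X;\Z) \cong \Hom(H_2(X),\Z) \oplus \Ext(H_1(X),\Z).
\]
Simple connectivity gives $H_1(X) = 0$, killing the Ext summand, and the hypothesis $\Hom(H_2(X),\Z)=0$ kills the Hom summand (this hypothesis holds in particular when $H_2(X)$ is torsion). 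Hence $H^2(X;\Pi_{A,B})=0$, so $[\theta(A,B)]=0$, and the main theorem yields that $A$ and $B$ are unitarily equivalent.

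For the final assertion about diagonalizability, I would show that any multiplicity-free normal matrix $A \in M_n(C(X))$ is unitarily equivalent to a diagonal matrix $D$ with the same eigenvalues at each point, to which the first part then applies. The key point is that the set $\{(x,\lambda) \in X \times \C : \lambda \text{ is an eigenvalue of } A(x)\}$, being the zero locus of the characteristic polynomial of a multiplicity-free matrix, forms an $n$-sheeted covering space of $X$. Since $X$ is simply-connected, this covering is trivial; that is, it splits as a disjoint union of $n$ copies of $X$. Choosing a splitting gives continuous functions $d_1,\dots,d_n \in C(X)$ whose values at each $x$ are precisely the eigenvalues of $A(x)$. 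The diagonal matrix $D = \operatorname{diag}(d_1,\dots,d_n)$ then has the same characteristic polynomial as $A$ and is multiplicity-free, so the first part of the corollary produces a unitary $U \in U_n(C(X))$ with $A = U^*DU$.

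The only genuine obstacle is the trivialization of the eigenvalue covering in the second part, but this is a standard consequence of simple connectivity together with the fact that multiplicity-freeness makes the eigenvalue locus a bona fide covering (rather than merely a branched cover); no further work is required, since the cohomological computation collapses immediately under the stated hypotheses.
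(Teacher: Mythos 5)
Your proof is correct, and the first half is essentially identical to the paper's argument: trivialize the coefficient system using simple connectivity, apply the universal coefficient theorem, use $H_1(X)=0$ to kill the $\Ext$ term and the hypothesis $\Hom(H_2(X),\Z)=0$ to kill the $\Hom$ term, and conclude via the main obstruction theorem. The only divergence is in the diagonalizability step. The paper obtains the global splitting of the characteristic polynomial by citing Gorin and Lin \cite[Theorem 1.6]{GL}, which gives splitting under the weaker hypothesis that $\pi_1(X)$ admits no nontrivial homomorphism to the braid group $\mc B_n$. You instead prove the splitting directly: multiplicity-freeness makes the eigenvalue locus $\{(x,\lambda):\mu(x,\lambda)=0\}$ an honest $n$-sheeted covering of $X$ (local triviality coming from the continuous dependence of the distinct roots on the coefficients, e.g.\ via Rouch\'e), and simple connectivity plus local path-connectedness of a CW complex forces this covering to be trivial, yielding the eigenvalue functions $d_1,\dots,d_n$. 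This is a self-contained and more elementary route to the special case you need; the citation to Gorin--Lin buys generality (splitting over spaces that are not simply connected but whose fundamental groups admit no braid-group representations), which the paper exploits elsewhere. The one point you gesture at rather than prove --- that the root locus is locally trivial over $X$ --- is indeed standard, and your appeal to it is acceptable.
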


Less obviously, our obstruction also begins to provide answers to our second question, concerning the number of unitary equivalence classes of multiplicity-free normal matrices over $X$. In Section \ref{S: OR}, we demonstrate the following as Corollary \ref{C: counting}, slightly rephrased here for the introduction:

\begin{corollary*}
Given a connected CW complex $X$ and a multiplicity-free  polynomial $\mu\in C(X)[\lambda]$, the number of unitary equivalence classes of normal matrices in $M_n(C(X))$ with 
characteristic polynomial $\mu$ is less than or equal to the cardinality of $H^2(X; \Z^n_\rho)$, where $\Z^n_\rho$ is the system of local coefficients with fiber $\Z^n$ and representation of $\pi_1(X)$  determined by the monodromy of the roots of $\mu$.
In particular, if $H^2(X; \Z^n_\rho)$ is finite, there are a finite number of such equivalence classes, and if $X$ contains a countable number of cells, there are 
a countable number of such equivalence classes.
\end{corollary*}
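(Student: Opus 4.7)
The plan is to construct an injection from the set of unitary equivalence classes of normal multiplicity-free matrices in $M_n(C(X))$ with characteristic polynomial $\mu$ into the cohomology group $H^2(X; \Z^n_\rho)$, using the obstruction class from the main theorem. First, if no normal matrix in $M_n(C(X))$ has characteristic polynomial $\mu$, the bound is trivial, so I may assume such a matrix exists; fix one and call it $A_0$. Since any two matrices $A, B$ with characteristic polynomial $\mu$ share the same eigenvalue functions pointwise, the monodromy representation of $\pi_1(X)$ permuting eigenvalues is determined by $\mu$, so the local coefficient system $\Pi_{A, B}$ of Theorem~\ref{T: U equiv} coincides with $\Z^n_\rho$ and depends only on $\mu$. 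I then define
$$\Phi\colon \{[A]\}_\mu \to H^2(X; \Z^n_\rho), \qquad [A]\mapsto [\theta(A_0, A)],$$
where $\{[A]\}_\mu$ denotes the set of unitary equivalence classes with characteristic polynomial $\mu$. The main theorem gives $\Phi([A])=0$ if and only if $A\sim A_0$.

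To promote this to a full injection, I would establish a cocycle relation
$$[\theta(A, C)] = [\theta(A, B)] + [\theta(B, C)]$$
for any three normal multiplicity-free matrices $A, B, C$ with characteristic polynomial $\mu$. With this identity in hand, well-definedness of $\Phi$ follows automatically (if $A\sim A'$, then $\theta(A, A')=0$, so $[\theta(A_0, A)] = [\theta(A_0, A')]$), and injectivity follows as well: if $\Phi([A])=\Phi([B])$, then $[\theta(A, B)] = -[\theta(A_0, A)] + [\theta(A_0, B)] = 0$, whence $A\sim B$ by the main theorem. The main obstacle is establishing the cocycle relation. I expect it to follow directly from the construction of $\theta$ as a cellular $2$-cochain: one builds $\theta(A, B)$ by choosing a lift of the classifying map into the fiber bundle over the $1$-skeleton of $X$ and measuring the failure to extend over each $2$-cell, with values in the fiber $\Z^n$. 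The cocycle relation then reduces to the additivity of such defects in the abelian fiber group, up to a $1$-coboundary coming from the choices of lifts.

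The counting consequences are immediate from injectivity of $\Phi$. If $H^2(X; \Z^n_\rho)$ is finite, so is the set of equivalence classes. If $X$ has countably many cells, then in each dimension the cellular $2$-cochains supported on finitely many cells form a countable group, and the cohomology classes in the image of $\Phi$ are realized by such finitely-supported data (since $\theta(A_0, A)$ is constructed from local comparisons of lifts), yielding countably many equivalence classes.
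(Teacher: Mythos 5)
Your overall strategy is the same as the paper's: fix a reference matrix $A_0$ with characteristic polynomial $\mu$, send the class of $B$ to $[\theta(A_0,B)]$, and deduce injectivity from the additive (cocycle) relation among the obstructions; this is exactly Proposition \ref{P: Os} and its proof via Lemmas \ref{L: sums} and \ref{L: sum}. However, the step you defer --- the relation $[\theta(A,C)]=[\theta(A,B)]+[\theta(B,C)]$ --- is where essentially all of the work lies, and your justification (``additivity of such defects in the abelian fiber group'') skips two genuine issues. First, the three classes a priori live in three \emph{different} coefficient systems $\Pi_{A,B}$, $\Pi_{B,C}$, $\Pi_{A,C}$, which are isomorphic but not equal; before one can add anything one must fix basing isomorphisms $\mf o_{A,B}\colon \Z^n_\rho \to \Pi_{A,B}$ (determined by an ordering of the eigenvalues at the basepoint) and check that the proposed addition is compatible with them. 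Second, the addition itself is not a formal consequence of the fibers being abelian: it is induced by a specific bundle morphism $\Phi^*_{A,B}E_n\oplus\Phi^*_{B,C}E_n\to\Phi^*_{A,C}E_n$ given fiberwise by composition of unitaries, $(U,V)\mapsto VU$, and one must verify both that this map sends the $i$th generator of $\pi_1$ of each torus fiber to the $i$th generator of the target (Lemma \ref{L: sums}) and that it carries the pair of obstruction cochains to an obstruction cochain for $\Phi_{A,C}$ (Lemma \ref{L: sum}). These verifications are the substance of the paper's Section \ref{S: OR} and should not be treated as automatic.

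A smaller issue: your argument for the countability clause does not work as stated. The obstruction cochain $\theta^2(\td f^1)$ assigns a value to \emph{every} $2$-cell, and there is no reason for it to be finitely supported, so you cannot conclude that the classes in the image of your map are represented by finitely supported cochains; moreover, the group of all cellular $2$-cochains on a complex with countably many cells is in general uncountable, so the cardinality of $H^2(X;\Z^n_\rho)$ alone does not yield countability. (The paper asserts this clause without further argument, so the gap is not yours alone, but the finite-support reasoning should be removed or replaced.) The rest of your write-up --- the reduction to the nonempty case, the identification of $\Pi_{A_0,B}$ with $\Z^n_\rho$ via the monodromy of the roots of $\mu$, the use of Theorem \ref{T: U equiv} for the vanishing criterion, and the derivation of well-definedness and injectivity from the cocycle relation --- matches the paper's argument.
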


Even the final statement that if $X$ has a countable number of cells  then there are a countable number of 
unitary equivalence classes is not obvious; \emph{a priori}, there could be an uncountable number of equivalence classes.   

\paragraph{Organization.}
The paper is organized as follows: In Section 2, we construct, for each natural number $n$, an $n$-torus fiber bundle $p: E_n \longrightarrow B_n$;
this bundle captures information about various ways one set of one-dimensional orthogonal spanning projections can be unitarily conjugated to another set.  
In Section 3, we show that given two normal multiplicity free matrices $A$ and $B$ over $X$ that have the same characteristic polynomial, 
there is a continuous map $\Phi_{A,B}: X \longrightarrow B_n$ with the feature that $A$ and $B$ are unitarily equivalent if and only if $\Phi_{A,B}$ lifts
to a map to $E_n$.    By replacing the unitary equivalence question into one involving the lifting of maps, we establish the aforementioned theorem and corollary.  
 In Section 4, we explore the functorial and naturality properties of our invariant, and extend $[\theta(A, B)]$ to certain topological spaces that are not CW complexes.  In Section 5, we examine monodromy issues and show that the coefficient system $\Pi_{A,B}$ 
only depends on the common characteristic polynomial of $A$ and $B$, not on the matrices themselves.  In Section 6, we consider how $[\theta(A,B)]$ 
behaves when we vary $A$ and $B$, and we also explore how $[\theta(A,B)]$, $[\theta(B,C)]$, and $[\theta(A,C)]$ are related when $A$, $B$, and $C$ are
normal multiplicity free matrices with the same characteristic polynomial. This leads to our bounds on the cardinality of the set of unitary equivalence classes.  In Section 7, we show that if the characteristic polynomial globally factors into 
linear factors, then we can write our invariant in terms of Chern classes, and we look at some examples.  In the final section, we close with some open questions.

\section{A useful fiber bundle}

We construct a fiber bundle $p:E_n\longrightarrow B_n$, starting with the base.  Let $\mathcal{P}$ and $\mathcal{Q}$ be sets of $n$ pairwise orthogonal
projections in $M_n(\mathbb{C})$; it is important to observe that we do not assume any ordering of the elements of $\mathcal{P}$ and $\mathcal{Q}$. 
Note that each projection in $\mathcal{P}$ has rank one and that $\sum_{P \in \mathcal{P}} P$ is the identity matrix $I_n$.  Similarly, each
projection in $\mathcal{Q}$ has rank one and $\sum_{Q \in \mathcal{Q}} Q = I_n$.  Set
\[
B_n = \bigl\{(\mathcal{P}, \mathcal{Q}, \sigma) : \text{ $\sigma$ is a bijection from $\mathcal{P}$ to $\mathcal{Q}$}\bigr\}.
\]

We will construct a metric on $B_n$. Let $\Vert \cdot \Vert_2$ be the usual Hilbert space norm on $\mathbb{C}^n$; i.e., if 
$\{\e_i\}$ is an orthonormal basis of  $\mathbb{C}^n$ in its standard inner product
and $v = \sum_{i=1}^n \lambda_i\e_i$, then $\Vert v \Vert_2 = \sqrt{\sum_{i=1}^n |\lambda_i |^2}$.
Let $\Vert \cdot \Vert$ denote the operator norm on $M_n(\mathbb{C})$:
\[
\Vert A \Vert = \sup \left\{\frac{\Vert Av \Vert_2}{\Vert v \Vert_2} : v \neq 0\right\}.
\]
For each pair of elements of $B_n$, define 
\begin{multline*}
d\bigl((\mathcal{P}, \mathcal{Q}, \sigma), (\widetilde{\mathcal{P}}, \widetilde{\mathcal{Q}}, \widetilde{\sigma})\bigr) = \\
\min\Bigl\{ \max\Bigl\{\Vert P - \tau(P)\Vert, \Vert \sigma(P) - \widetilde{\sigma}\tau(P)\Vert : P\in\mathcal{P}\Bigr\}
: \text{ $\tau$ is a bijection from $\mathcal{P}$ to $\widetilde{\mathcal{P}}$}\Bigr\}.
\end{multline*}
Roughly speaking, the idea of the definition is that we measure the distance between sets of projections by looking at the distances among individual pairs of 
projections after using $\tau$ to match up the pairs as closely as possible.

\begin{proposition} The function $d$ is a metric (distance function) on $B_n$.
\end{proposition}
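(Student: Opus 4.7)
The plan is to verify the four axioms of a metric: non-negativity, identity of indiscernibles, symmetry, and the triangle inequality. Non-negativity is immediate from the non-negativity of the operator norm. Also note that since $\mathcal{P}$ and $\widetilde{\mathcal{P}}$ each have exactly $n$ elements, there are only $n!$ bijections $\tau: \mathcal{P} \to \widetilde{\mathcal{P}}$, so the minimum in the definition of $d$ is attained.

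For reflexivity, taking $\tau$ to be the identity on $\mathcal{P}$ makes every term $\|P - \tau(P)\|$ and $\|\sigma(P) - \sigma\tau(P)\|$ equal to zero, so $d((\mathcal{P},\mathcal{Q},\sigma),(\mathcal{P},\mathcal{Q},\sigma)) = 0$. For the identity of indiscernibles, if $d((\mathcal{P},\mathcal{Q},\sigma),(\widetilde{\mathcal{P}},\widetilde{\mathcal{Q}},\widetilde{\sigma})) = 0$, there is some bijection $\tau$ with $\|P - \tau(P)\| = 0$ for every $P \in \mathcal{P}$, forcing $\tau(P) = P$; thus $\mathcal{P} = \widetilde{\mathcal{P}}$ as sets. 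The vanishing of $\|\sigma(P) - \widetilde{\sigma}\tau(P)\| = \|\sigma(P) - \widetilde{\sigma}(P)\|$ then gives $\sigma = \widetilde{\sigma}$, whence $\mathcal{Q} = \sigma(\mathcal{P}) = \widetilde{\sigma}(\widetilde{\mathcal{P}}) = \widetilde{\mathcal{Q}}$.

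Symmetry follows by replacing any bijection $\tau: \mathcal{P} \to \widetilde{\mathcal{P}}$ with its inverse $\tau^{-1}: \widetilde{\mathcal{P}} \to \mathcal{P}$. Setting $\widetilde{P} = \tau(P)$, we have $\|\widetilde{P} - \tau^{-1}(\widetilde{P})\| = \|\tau(P) - P\|$ and $\|\widetilde{\sigma}(\widetilde{P}) - \sigma\tau^{-1}(\widetilde{P})\| = \|\widetilde{\sigma}\tau(P) - \sigma(P)\|$, so the bijections $\tau$ and $\tau^{-1}$ give the same max, and hence the minima coincide.

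The main technical step is the triangle inequality. Given three points $x=(\mathcal{P},\mathcal{Q},\sigma)$, $y=(\mathcal{P}',\mathcal{Q}',\sigma')$, $z=(\mathcal{P}'',\mathcal{Q}'',\sigma'')$ in $B_n$, let $\tau_1: \mathcal{P} \to \mathcal{P}'$ and $\tau_2: \mathcal{P}' \to \mathcal{P}''$ be bijections realizing $d(x,y)$ and $d(y,z)$ respectively, and consider the composite $\tau = \tau_2 \circ \tau_1: \mathcal{P} \to \mathcal{P}''$. For each $P \in \mathcal{P}$, the triangle inequality for the operator norm gives
\[
\|P - \tau(P)\| \leq \|P - \tau_1(P)\| + \|\tau_1(P) - \tau_2\tau_1(P)\| \leq d(x,y) + d(y,z),
\]
and similarly
\[
\|\sigma(P) - \sigma''\tau(P)\| \leq \|\sigma(P) - \sigma'\tau_1(P)\| + \|\sigma'\tau_1(P) - \sigma''\tau_2\tau_1(P)\| \leq d(x,y) + d(y,z).
\]
Taking the max over $P \in \mathcal{P}$ and then noting that $d(x,z)$ is defined as a minimum over all bijections $\mathcal{P} \to \mathcal{P}''$, and in particular is bounded above by the value attained by $\tau$, yields $d(x,z) \leq d(x,y) + d(y,z)$. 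I expect the triangle inequality to be the only step that requires a genuine idea, namely the use of composition of bijections to transport the optimal matchings; the remaining axioms are essentially bookkeeping.
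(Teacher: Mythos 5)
Your proof is correct and follows essentially the same approach as the paper: identity of indiscernibles via a bijection forcing $P = \tau(P)$, symmetry via inverting an optimal bijection, and the triangle inequality via composing the bijections realizing the minima for $d(x,y)$ and $d(y,z)$. The only cosmetic difference is that you make the finiteness of the set of bijections (hence attainment of the minimum) and reflexivity explicit, whereas the paper leaves these implicit.
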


\begin{proof}
Clearly $d\bigl((\mathcal{P}, \mathcal{Q}, \sigma), (\widetilde{\mathcal{P}}, \widetilde{\mathcal{Q}}, \widetilde{\sigma})\bigr)$
is always nonnegative; suppose this quantity equals $0$.  Then there exists a bijection 
$\tau: \mathcal{P} \longrightarrow \widetilde{\mathcal{P}}$ with the property that $P = \tau(P)$ for every $P$ in $\mathcal{P}$.
Thus $\mathcal{P} = \widetilde{\mathcal{P}}$ and $\tau$ is the identity map.  Next,
$\sigma(P) = \widetilde{\sigma}\tau(P) = \widetilde{\sigma}(P)$ for all $P$ in $\mathcal{P}$, so  $\sigma = \widetilde{\sigma}$ and thus $\mathcal{Q} = \widetilde{\mathcal{Q}}$.

Next, let $(\mathcal{P}, \mathcal{Q}, \sigma)$ and $(\widetilde{\mathcal{P}}, \widetilde{\mathcal{Q}}, \widetilde{\sigma})$ be
arbitrary elements of $B_n$ and choose 
$\tau: \mathcal{P} \longrightarrow \widetilde{\mathcal{P}}$ so that the minimum in 
$d\bigl((\mathcal{P}, \mathcal{Q}, \sigma), (\widetilde{\mathcal{P}}, \widetilde{\mathcal{Q}}, \widetilde{\sigma})\bigr)$ 
is realized.  Then
\begin{align*}
d\bigl((\mathcal{P}, \mathcal{Q}, \sigma), (\widetilde{\mathcal{P}}, \widetilde{\mathcal{Q}}, \widetilde{\sigma})\bigr) 
&= \max\Bigl\{\Vert P - \tau(P)\Vert, \Vert \sigma(P) - \widetilde{\sigma}\tau(P)\Vert : P\in\mathcal{P}\Bigr\} \\
&= \max\Bigl\{\Vert \tau^{-1}(\widetilde{P}) - \widetilde{P}\Vert, 
\Vert \sigma\tau^{-1}(\widetilde{P}) - \widetilde{\sigma}(\widetilde{P})\Vert : \widetilde{P} \in \widetilde{\mathcal{P}} \Bigr\} \\
&= \max\Bigl\{\Vert \widetilde{P} - \tau^{-1}(\widetilde{P})\Vert, 
\Vert \widetilde{\sigma}(\widetilde{P}) - \sigma\tau^{-1}(\widetilde{P}) \Vert : \widetilde{P} \in \widetilde{\mathcal{P}}\Bigr\} \\
&\geq d\bigl((\widetilde{\mathcal{P}}, \widetilde{\mathcal{Q}}, \widetilde{\sigma}), (\mathcal{P}, \mathcal{Q}, \sigma)\bigr).
\end{align*}
Reversing the roles of $(\mathcal{P}, \mathcal{Q}, \sigma)$ and 
$(\widetilde{\mathcal{P}}, \widetilde{\mathcal{Q}}, \widetilde{\sigma})$ establishes the symmetry of $d$.

 Finally, for three arbitrary elements $(\mathcal{P}, \mathcal{Q}, \sigma)$, 
 $(\widetilde{\mathcal{P}}, \widetilde{\mathcal{Q}}, \widetilde{\sigma})$,
 and  $(\widehat{\mathcal{P}}, \widehat{\mathcal{Q}}, \widehat{\sigma})$, in $B_n$, 
 choose $\tau: \mathcal{P} \longrightarrow \widetilde{\mathcal{P}}$ and 
 $\nu: \widetilde{\mathcal{P}} \longrightarrow \widehat{\mathcal{P}}$ so that the minima in the definitions of 
 $d\bigl((\mathcal{P}, \mathcal{Q}, \sigma), (\widetilde{\mathcal{P}}, \widetilde{\mathcal{Q}}, \widetilde{\sigma})\bigr)$ 
 and 
 $d\bigl((\widetilde{\mathcal{P}}, \widetilde{\mathcal{Q}}, \widetilde{\sigma}), 
 (\widehat{\mathcal{P}}, \widehat{\mathcal{Q}}, \widehat{\sigma})\bigr)$ are realized.  Then
 \begin{align*}
&d\bigl((\mathcal{P}, \mathcal{Q}, \sigma), (\widetilde{\mathcal{P}}, \widetilde{\mathcal{Q}}, \widetilde{\sigma})\bigr) + 
d\bigl((\widetilde{\mathcal{P}}, \widetilde{\mathcal{Q}}, \widetilde{\sigma}), 
 (\widehat{\mathcal{P}}, \widehat{\mathcal{Q}}, \widehat{\sigma})\bigr) \\
&= \max\Bigl\{\Vert P - \tau(P)\Vert, \Vert \sigma(P) - \widetilde{\sigma}\tau(P)\Vert : P\in\mathcal{P}\Bigr\} \\
&{\hskip180pt}+  \max\Bigl\{\Vert \widetilde{P} - \nu(\widetilde{P})\Vert, 
\Vert \widetilde{\sigma}(\widetilde{P}) - \widehat{\sigma}\nu(\widetilde{P})\Vert : 
\widetilde{P} \in \widetilde{\mathcal{P}}\Bigr\} \\
&\geq \max\Bigl\{\Vert P - \tau(P)\Vert + \Vert \widetilde{P} - \nu(\widetilde{P})\Vert,
\Vert \sigma(P) - \widetilde{\sigma}\tau(P)\Vert + \Vert \widetilde{\sigma}(\widetilde{P}) - \widehat{\sigma}\nu(\widetilde{P})\Vert :
P\in\mathcal{P}, \widetilde{P} \in \widetilde{\mathcal{P}}\Bigr\} \\
&\geq \max\Bigl\{\Vert P - \tau(P)\Vert + \Vert \tau(P) - \nu\tau(P)\Vert, \Vert \sigma(P) - \widetilde{\sigma}\tau(P)\Vert + 
 \Vert \widetilde{\sigma}\tau(P) - \widehat{\sigma}\nu\tau(P)\Vert :  P\in\mathcal{P} \Bigr\}\\
&\geq \max\Bigl\{\Vert P - \nu\tau(P)\Vert, \Vert \sigma(P) - \widehat{\sigma}\nu\tau(P)\Vert :  P\in\mathcal{P}\Bigr\}\\
&\geq d\bigl((\mathcal{P}, \mathcal{Q}, \sigma), (\widehat{\mathcal{P}}, \widehat{\mathcal{Q}}, \widehat{\sigma})\bigr).
\end{align*}
\end{proof}
Endow $B_n$ with the metric topology associated to $d$.  

\begin{lemma}\label{technical}
Let $(\mathcal{P}, \mathcal{Q}, \sigma)$, $(\widetilde{\mathcal{P}}, \widetilde{\mathcal{Q}}, \widetilde{\sigma})$,
and $(\widehat{\mathcal{P}}, \widehat{\mathcal{Q}}, \widehat{\sigma})$ be elements of $B_n$.
\begin{enumerate}
\item Suppose there exists a bijection $\widetilde{\tau}: \mathcal{P} \longrightarrow \widetilde{\mathcal{P}}$ with the property that 
\[
\max\Bigl\{\Vert P - \widetilde{\tau}(P)\Vert, \Vert \sigma(P) - \widetilde{\sigma}\widetilde{\tau}(P)\Vert : P\in\mathcal{P}\Bigr\} <  \frac{1}{2}.
\]
Then 
\[
d\bigl((\mathcal{P}, \mathcal{Q}, \sigma), (\widetilde{\mathcal{P}}, \widetilde{\mathcal{Q}}, \widetilde{\sigma})\bigr) 
= \max\Bigl\{\Vert P - \widetilde{\tau}(P)\Vert, \Vert \sigma(P) - \widetilde{\sigma}\widetilde{\tau}(P)\Vert : P\in\mathcal{P}\Bigr\}.
\]
In other words, $\widetilde{\tau}$ realizes the minimum in the definition of 
$d\bigl((\mathcal{P}, \mathcal{Q}, \sigma), (\widetilde{\mathcal{P}}, \widetilde{\mathcal{Q}}, \widetilde{\sigma})\bigr)$. 
Furthermore, $\widetilde{\tau}$ is the unique bijection with this property.

\item Suppose that 
$d\bigl((\mathcal{P}, \mathcal{Q}, \sigma), (\widetilde{\mathcal{P}}, \widetilde{\mathcal{Q}}, \widetilde{\sigma})\bigr)$
and $d\bigl((\mathcal{P}, \mathcal{Q}, \sigma), (\widehat{\mathcal{P}}, \widehat{\mathcal{Q}}, \widehat{\sigma})\bigr)$
are less than $1\slash 4$
and let $\widetilde{\tau}$ and $\widehat{\tau}$ be the bijections that realize the minima 
for $d$ in these two cases, respectively.  If
\[
d\bigl((\widetilde{\mathcal{P}}, \widetilde{\mathcal{Q}}, \widetilde{\sigma}), 
(\widehat{\mathcal{P}}, \widehat{\mathcal{Q}}, \widehat{\sigma})\bigr) < \epsilon,
\]
then $\Vert \widetilde{\tau}(P) - \widehat{\tau}(P)\Vert < \epsilon$ for all $P$ in $\mathcal{P}$.
\end{enumerate}
\end{lemma}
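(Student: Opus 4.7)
The linchpin of both parts is an elementary fact about rank-one projections: if $P$ and $P'$ are distinct rank-one projections with $PP'=0$, then $(P-P')^2 = P + P'$ is a rank-two projection, so the self-adjoint operator $P-P'$ has eigenvalues $\pm 1$ together with zeros, and therefore $\|P - P'\| = 1$. Since the elements of $\mathcal{P}$, $\widetilde{\mathcal{P}}$, $\widehat{\mathcal{P}}$, $\mathcal{Q}$, $\widetilde{\mathcal{Q}}$, and $\widehat{\mathcal{Q}}$ are pairwise orthogonal rank-one projections, any two distinct elements within each such set are at operator-norm distance exactly $1$. With this in hand, both parts reduce to triangle-inequality bookkeeping.

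For Part (1), I plan to show that every bijection $\tau : \mathcal{P} \to \widetilde{\mathcal{P}}$ different from $\widetilde{\tau}$ produces a maximum strictly greater than $1/2$. Indeed, pick $P \in \mathcal{P}$ with $\tau(P) \neq \widetilde{\tau}(P)$; these are two distinct elements of $\widetilde{\mathcal{P}}$, so the triangle inequality
\[
1 = \|\tau(P) - \widetilde{\tau}(P)\| \leq \|P - \tau(P)\| + \|P - \widetilde{\tau}(P)\|,
\]
combined with $\|P - \widetilde{\tau}(P)\| < 1/2$, forces $\|P - \tau(P)\| > 1/2$. Consequently $\widetilde{\tau}$ realizes the minimum in the definition of $d$, and no other bijection attains a maximum smaller than $1/2$, giving uniqueness.

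For Part (2), I will consider the composite $\mu := \widehat{\tau} \circ \widetilde{\tau}^{-1} : \widetilde{\mathcal{P}} \to \widehat{\mathcal{P}}$ and verify that it satisfies the hypothesis of Part (1) for the pair $(\widetilde{\mathcal{P}}, \widetilde{\mathcal{Q}}, \widetilde{\sigma})$ and $(\widehat{\mathcal{P}}, \widehat{\mathcal{Q}}, \widehat{\sigma})$. Using $P$ as an intermediate point gives
\[
\|\widetilde{\tau}(P) - \widehat{\tau}(P)\| \leq \|\widetilde{\tau}(P) - P\| + \|P - \widehat{\tau}(P)\| < \tfrac{1}{4} + \tfrac{1}{4} = \tfrac{1}{2},
\]
and routing through $\sigma(P)$ produces the companion bound on $\|\widetilde{\sigma}\widetilde{\tau}(P) - \widehat{\sigma}\widehat{\tau}(P)\|$. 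Part (1) then identifies $\mu$ with the unique bijection realizing $d\bigl((\widetilde{\mathcal{P}}, \widetilde{\mathcal{Q}}, \widetilde{\sigma}), (\widehat{\mathcal{P}}, \widehat{\mathcal{Q}}, \widehat{\sigma})\bigr)$. Setting $\widetilde{P} = \widetilde{\tau}(P)$, we conclude that $\|\widetilde{\tau}(P) - \widehat{\tau}(P)\| = \|\widetilde{P} - \mu(\widetilde{P})\|$ is bounded by this distance, which by hypothesis is less than $\epsilon$.

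The only genuine obstacle is the norm-$1$ fact about orthogonal rank-one projections; once it is in hand, everything else is bookkeeping. The factor $1/4$ in the hypothesis of Part (2) is precisely calibrated so that the two $1/4$-estimates combine through the triangle inequality to produce the strict $1/2$-gap required to invoke Part (1) on $\mu$, and I expect this calibration to be the only place where one must resist the temptation to weaken the hypothesis.
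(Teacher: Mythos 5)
Your proof is correct and follows essentially the same route as the paper's: both parts rest on the fact that distinct mutually orthogonal rank-one projections are at operator-norm distance at least $1$, combined with the triangle inequality to rule out competing bijections in part (1) and to verify that $\widehat{\tau}\circ\widetilde{\tau}^{-1}$ satisfies the hypothesis of part (1) in part (2). The only cosmetic difference is that the paper bounds $\Vert P - \widetilde{P}\Vert$ from below via the reverse triangle inequality through $\widetilde{\tau}(P)$, while you bound $\Vert P - \tau(P)\Vert$ through the same pair of points; these are the same estimate.
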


\begin{proof}(i). From the definition of $d$, we see that $\Vert P - \widetilde{\tau}(P)\Vert < 1\slash2$ for every
$P$ in $\mathcal{P}$.  Select one such $P$ and let $\widetilde{P}$ be any
element of $\widetilde{\mathcal{P}}$ other than $\widetilde{\tau}(P)$.  The ranges of the elements of $\widetilde{\mathcal{P}}$
are pairwise orthogonal and span $\mathbb{C}^n$, whence $\ran \widetilde{P} \subseteq \ran (\tau(P))^\perp$.
Therefore for any unit vector $v$ in $\ran \widetilde{P}$, 
\[
\bigl(\widetilde{P} - \widetilde{\tau}(P)\bigr)v = \widetilde{P}v - \widetilde{\tau}(P)v = \widetilde{P}v = v,
\]
and thus $\Vert \widetilde{P}- \widetilde{\tau}(P)\Vert \geq 1$.  The triangle inequality then yields
\[
\Vert P - \widetilde{P}\Vert \geq \Vert \widetilde{P} - \widetilde{\tau}(P)\Vert - \Vert P - \widetilde{\tau}(P)\Vert > 1 - \frac{1}{2} = \frac{1}{2},
\]
and hence any other choice of bijection from $\mathcal{P}$ to $\widetilde{\mathcal{P}}$ will not achieve the minimum
in the definition of $d\bigl((\mathcal{P}, \mathcal{Q}, \sigma), (\widetilde{\mathcal{P}}, \widetilde{\mathcal{Q}}, \widetilde{\sigma})\bigr)$.
\newline
(ii).  Because $\widetilde{\tau}$ is a bijection,
\begin{align*}
\max\Bigl\{\Vert \widetilde{P} - \widehat{\tau}\widetilde{\tau}^{-1}(\widetilde{P})\Vert : \widetilde{P} \in \widetilde{\mathcal{P}}\Bigr\} 
&\leq \max\Bigl\{\Vert \widetilde{P} - \widetilde{\tau}^{-1}(\widetilde{P})\Vert + 
\Vert \widetilde{\tau}^{-1}(\widetilde{P}) - \widehat{\tau}\widetilde{\tau}^{-1}(\widetilde{P})\Vert: \widetilde{P} \in \widetilde{\mathcal{P}}\Bigr\} \\
&\leq \max\Bigl\{\Vert \widetilde{\tau}(P) - P\Vert + \Vert P - \widehat{\tau}(P)\Vert: P \in \mathcal{P}\Bigr\} \\
&< \frac{1}{4} + \frac{1}{4} = \frac{1}{2}
\end{align*}
and 
\begin{align*}
\max\Bigl\{\Vert \widetilde{\sigma}(\widetilde{P}) - \widehat{\sigma}\widehat{\tau}\widetilde{\tau}^{-1}(\widetilde{P})\Vert 
&: \widetilde{P} \in \widetilde{\mathcal{P}}\Bigr\} \\
&\leq \max\Bigl\{\Vert \widetilde{\sigma}(\widetilde{P}) - \sigma\widetilde{\tau}^{-1}(\widetilde{P})\Vert 
+ \Vert \sigma\widetilde{\tau}^{-1}(\widetilde{P}) - \widehat{\sigma}\widehat{\tau}\widetilde{\tau}^{-1}(\widetilde{P})\Vert 
: \widetilde{P} \in \widetilde{\mathcal{P}}\Bigr\} \\
&\leq \max\Bigl\{\Vert \widetilde{\sigma}\widetilde{\tau}(P) - \sigma(P)\Vert 
+ \Vert \sigma(P) - \widehat{\sigma}\widehat{\tau}(P)\Vert : P \in \mathcal{P}\Bigr\} \\
&< \frac{1}{4} + \frac{1}{4} = \frac{1}{2}.
\end{align*}
From (i) we see that $\widehat{\tau}\widetilde{\tau}^{-1}$ is 
the bijection that realizes the minimum in 
$d\bigl((\widetilde{\mathcal{P}}, \widetilde{\mathcal{Q}}, \widetilde{\sigma}), 
(\widehat{\mathcal{P}}, \widehat{\mathcal{Q}}, \widehat{\sigma})\bigr)$.  Thus
\[ 
\epsilon > \max\{\Vert \widetilde{P} - \widehat{\tau}\widetilde{\tau}^{-1}(\widetilde{P})\Vert : \widetilde{P} \in \widetilde{\mathcal{P}}\}
= \max\{\Vert \widetilde{\tau}(P) - \widehat{\tau}(P)\Vert : P \in \mathcal{P}\}.
\]
\end{proof}

Endow $M_n(\mathbb{C})$ with its usual topology, and let $U_n$ be the topological subspace of unitary matrices in $M_n(\mathbb{C})$.  Define
\[
E_n = \bigl\{\bigl((\mathcal{P}, \mathcal{Q}, \sigma), U\bigr) \in B_n \times U_n : UPU^* = \sigma(P)\  
\text{for all $P$ in $\mathcal{P}$}\bigr\}.
\]
Note that $\bigl((\mathcal{P}, \mathcal{Q}, \sigma), U\bigr)$ is in $E_n$ if and only if $U$ restricts to an isometric vector space isomorphism 
from $\ran P$ to $\ran\sigma(P)$ for every $P$ in $\mathcal{P}$.  

Equip $E_n$ with the subspace topology it inherits from $B_n \times U_n$, and let 
$p: E_n \longrightarrow B_n$ be the projection map.  

If $\bigl((\mathcal{P}, \mathcal{Q}, \sigma), U\bigr)$ and $((\mathcal{P}, \mathcal{Q}, \sigma), \widetilde{U})$ are both in $E_n$, then they both
lie in $p^{-1}((\mathcal{P}, \mathcal{Q}, \sigma))$, and the unitaries $U$ and $\wt U$ each restrict to isometries from $\ran P$ to $\ran\sigma(P)$ for
every $P$ in  $\mathcal{P}$. As $\ran P$ and $\ran\sigma(P)$ are both one-dimensional subspaces of $\C^n$, two such isometries can differ
from each other only by an isometry of $\C$; such isometries can be represented by elements of $S^1$. Furthermore, because
$\{\ran P\}_{P\in \mc P}$ is a basis of $\C^n$, the matrices $U$ and $\wt U$ are determined completely by these one-dimensional isometries. 
Therefore, roughly speaking, the difference between $U$ and  $\wt U$ can be quantified by an element of 
$T^n \cong \prod_{P\in\mathcal{P}} S^1$. This is part of the content of the following, more precise, statement.

\begin{proposition}\label{fiber structure}
If $\bigl((\mathcal{P}, \mathcal{Q}, \sigma), U\bigr)$ is in $E_n$, then $((\mathcal{P}, \mathcal{Q}, \sigma), \widetilde{U})$
is in $p^{-1}((\mathcal{P}, \mathcal{Q}, \sigma))$ if and only if 
\[
\widetilde{U} = \sum_{P\in\mathcal{P}}\widetilde{z}_P\sigma(P)UP
\]
for some set $\{\widetilde{z}_P\}$ of complex numbers of modulus $1$.  
Furthermore, each such $\widetilde{U}$ can be uniquely written in this form.
\end{proposition}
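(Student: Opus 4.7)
The proof naturally splits into three parts: the ``if'' direction (every $\widetilde U$ of the stated form lies in the fiber), the ``only if'' direction (every element of $p^{-1}((\mathcal P, \mathcal Q, \sigma))$ has this form), and uniqueness of the coefficients. The underlying geometric intuition, already indicated in the discussion preceding the statement, is that each summand $\sigma(P) U P$ is the rank-one partial isometry obtained by restricting $U$ to the one-dimensional isomorphism $\ran P \to \ran\sigma(P)$, so varying $U$ within the fiber amounts to rescaling each such piece independently by a unit complex number.

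For the ``if'' direction, assuming $\widetilde U = \sum_{P\in\mathcal P} \widetilde z_P \,\sigma(P) U P$ with $|\widetilde z_P| = 1$, I would verify unitarity and the intertwining condition $\widetilde U P_0 \widetilde U^* = \sigma(P_0)$ by direct computation, using the orthogonality relations $P P' = \delta_{PP'} P$ and $\sigma(P)\sigma(P') = \delta_{PP'}\sigma(P)$ together with the identity $U^*\sigma(P)U = P$, which follows from $\sigma(P) = UPU^*$ and unitarity of $U$. The double sums in $\widetilde U^*\widetilde U$ and $\widetilde U\widetilde U^*$ then collapse to $\sum_{P} P = I$, and the triple sum in $\widetilde U P_0 \widetilde U^*$ collapses to the single nonzero term $\sigma(P_0)$.

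For the ``only if'' direction, the key step is the observation that $\widetilde U P \widetilde U^* = \sigma(P)$ combined with unitarity of $\widetilde U$ yields $\widetilde U P = \sigma(P) \widetilde U$, so that $\widetilde U P$ has range contained in $\ran\sigma(P)$; the same is true of $UP$. Restricted to the one-dimensional space $\ran P$, both $\widetilde U$ and $U$ are isometries into the one-dimensional space $\ran\sigma(P)$, and any two such isometries between one-dimensional Hilbert spaces differ by a unique unimodular scalar $\widetilde z_P$. Thus $\widetilde U P = \widetilde z_P \, UP = \widetilde z_P\, \sigma(P) UP$, and the representation follows by summing over $P\in \mathcal P$ and using $\sum_{P\in\mathcal P} P = I$.

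Uniqueness is immediate once existence is established: given any expansion of the stated form, right-multiplication by a fixed $P_0\in \mathcal P$ and the orthogonality of $\mathcal P$ collapse the sum to $\widetilde U P_0 = \widetilde z_{P_0} \, \sigma(P_0) U P_0$, and since $U P_0$ is nonzero on the one-dimensional subspace $\ran P_0$, the scalar $\widetilde z_{P_0}$ is uniquely determined. I expect no serious obstacle; the only point worth stating carefully is the reduction $U^*\sigma(P)U = P$ and the role of the one-dimensionality of $\ran P$ and $\ran\sigma(P)$, which is what turns ambiguity in $\widetilde U$ into a single element of $S^1$ per summand and ultimately identifies the fiber with $T^n$.
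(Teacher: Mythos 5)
Your proposal is correct and follows essentially the same route as the paper: direct verification of unitarity and the intertwining condition for the "if" direction, and for the converse the observation that two isometries between the one-dimensional spaces $\ran P$ and $\ran\sigma(P)$ differ by a unique unimodular scalar, then summing over $\mathcal{P}$ using $\sum_{P\in\mathcal{P}}P = I$. Your uniqueness argument (right-multiplying by $P_0$ to isolate the coefficient) is in fact slightly more explicit than the paper's, which simply declares uniqueness evident.
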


\begin{proof} Suppose $\widetilde{U}$ has the form described in the statement of the proposition.  From the definition of $E_n$, we have
$\sigma(P) = UPU^*$ for all $P$ in $\mathcal{P}$.  The projections $\sigma(P)$ in $\mathcal{Q}$ are pairwise orthogonal, and thus
\begin{align*}
\widetilde{U}\widetilde{U}^* &= \left(\sum_{P \in \mathcal{P}} \widetilde{z}_P\sigma(P)UP\right)
\left(\sum_{P \in \mathcal{P}}\overline{\widetilde{z}}_P P^*U^*\sigma(P)^*\right) \\
&= \left(\sum_{P \in \mathcal{P}} \widetilde{z}_P\sigma(P)UP\right)
\left(\sum_{P \in \mathcal{P}}\overline{\widetilde{z}}_P PU^*\sigma(P)\right) \\
&= \sum_{P \in \mathcal{P}} \widetilde{z}_P\overline{\widetilde{z}}_P \sigma(P)UPU^*\sigma(P) \\
&= \sum_{P \in \mathcal{P}}  \sigma(P)\sigma(P)\sigma(P) \\
&= \sum_{P \in \mathcal{P}}  \sigma(P) \\
&= I.
\end{align*}
A similar computation establishes that $\widetilde{U}^*\widetilde{U} = I$, so $\widetilde{U}$ is unitary.  
Next, because the projections in $\mathcal{P}$ are also pairwise orthogonal, we see that
\[
\widetilde{U}P = \widetilde{z}_P\sigma(P)UP =\sigma(P)\widetilde{U},
\]
and hence $\widetilde{U}P\widetilde{U}^* = \sigma(P)$ for every $P$ in $\mathcal{P}$.  The uniqueness of the representation
of $\widetilde{U}$ in the desired form is evident.

Now suppose that $\bigl((\mathcal{P}, \mathcal{Q}, \sigma), \widehat{U}\bigr)$ is in $E_n$.
Fix $P$ in $\mathcal{P}$.  From the remarks following the definition of $E_n$, both $U$ and $\widehat{U}$ restrict to isometric vector space
isomorphisms from $\ran P$ to $\ran\sigma(P)$; in symbols, these isomorphisms are $\sigma(P)UP$ and $\sigma(P)\widehat{U}P$.
The subspaces $\ran P$ and $\ran\sigma(P)$ are one-dimensional, so we must have 
$\sigma(P)\widehat{U}P = \widehat{z}_P\sigma(P)UP$ for some complex number $\widehat{z}_P$ of modulus $1$.  This holds true
for every $P$ in $\mathcal{P}$, and the pairwise orthogonality of the projections in $\mathcal{P}$ and $\mathcal{Q}$
implies that 
\[
\widehat{U} = \sum_{P\in\mathcal{P}}\widehat{z}_P\sigma(P)UP,
\]
whence $\widehat{U}$ has the claimed form.
\end{proof}

A consequence of Proposition \ref{fiber structure} is that we can identify
$p^{-1}((\mathcal{P}, \mathcal{Q}, \sigma))$ with $T^n \cong \prod_{P\in\mathcal{P}} S^1$.  In fact, $E_n$ is a $T^n$-fiber
bundle over $B_n$.  To show this, we first need to establish a technical result.

\begin{lemma}\label{close} Let $P$ and $\widetilde{P}$ be projections in $M_n(\mathbb{C})$ and suppose
that $\Vert P - \widetilde{P}\Vert < 1$.  Then $I + \widetilde{P} - P$ maps $\ran P$ isomorphically onto $\ran\widetilde{P}$.
\end{lemma}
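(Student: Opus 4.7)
My plan is to reduce the lemma to the simple identity $(I + \widetilde P - P)|_{\ran P} = \widetilde P|_{\ran P}$, which follows by a one-line calculation: if $v \in \ran P$ then $Pv = v$, so
\[
(I + \widetilde P - P)v = v + \widetilde P v - v = \widetilde P v \in \ran \widetilde P.
\]
Thus $I + \widetilde P - P$ automatically sends $\ran P$ into $\ran \widetilde P$, and the lemma reduces to showing that $\widetilde P|_{\ran P} \colon \ran P \to \ran \widetilde P$ is bijective.

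For injectivity, I would suppose that $v \in \ran P$ satisfies $\widetilde P v = 0$. Then $(\widetilde P - P)v = \widetilde P v - Pv = -v$, and the hypothesis $\Vert P - \widetilde P \Vert < 1$ yields
\[
\Vert v \Vert_2 = \Vert (\widetilde P - P) v \Vert_2 \leq \Vert \widetilde P - P \Vert \cdot \Vert v \Vert_2 < \Vert v \Vert_2
\]
unless $v = 0$, forcing $v = 0$. So $\widetilde P|_{\ran P}$ is injective.

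For surjectivity, the cleanest route is a dimension count rather than constructing a preimage by hand. Running exactly the same argument with the roles of $P$ and $\widetilde P$ swapped shows that $P|_{\ran \widetilde P} \colon \ran \widetilde P \to \ran P$ is also injective, so the two ranges have the same (finite) dimension. The injection $\widetilde P|_{\ran P}$ is then automatically a bijection.

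I do not anticipate any serious obstacle here; the only point requiring a moment of care is that one should not try to deduce surjectivity directly from injectivity without first establishing that $\ran P$ and $\ran \widetilde P$ have the same dimension, and that equality of dimensions is precisely what the symmetric application of the same estimate provides. The full strength of the hypothesis $\Vert P - \widetilde P\Vert < 1$ is used in both halves of this symmetric step.
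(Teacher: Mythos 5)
Your proof is correct and follows essentially the same route as the paper: reduce to the observation that $(I+\widetilde P-P)v=\widetilde Pv$ for $v\in\ran P$, prove injectivity, and get surjectivity from the symmetric argument and a dimension count. The only difference is that where the paper cites the invertibility of $I+\widetilde P-P$ from an external reference to get injectivity, you prove it directly with the norm estimate $\Vert v\Vert_2=\Vert(\widetilde P-P)v\Vert_2<\Vert v\Vert_2$, which makes the argument self-contained.
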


\begin{proof} The matrix $I + \widetilde{P} - P$ is invertible by Proposition 1.3.4 in \cite{Park}.  Take $v$ in $\ran P$.
Then $Pv = v$, and because $P^2 = P$, we see that
\[
(I + \widetilde{P} - P)v = (I + \widetilde{P} - P)Pv = Pv + \widetilde{P}Pv - Pv =  \widetilde{P}Pv.
\]
Therefore $I + \widetilde{P} - P$ is an injective vector space homomorphism from $\ran P$ to $\ran\widetilde{P}$,
which implies that $\dim\ran P \leq \dim\ran\widetilde{P}$.  A similar computation shows that $I + P - \widetilde{P}$ is
an injective vector space homomorphism from $\ran\widetilde{P}$ to $\ran P$, whence 
$\dim\ran\widetilde{P} \leq \dim\ran P$.  Thus $\dim\ran P = \dim\ran\widetilde{P}$ and $I + \widetilde{P} - P$ 
is an isomorphism from $\ran P$ to $\ran\widetilde{P}$.
\end{proof}

 \begin{proposition}\label{fiber} For each natural number $n$, the map $p$ makes $E_n$ into a fiber bundle 
 over $B_n$ with fiber homeomorphic to $T^n$, the $n$-dimensional torus.
 \end{proposition}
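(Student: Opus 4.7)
The plan is to exhibit a local trivialization around each point $b_0 = (\mc P_0, \mc Q_0, \sigma_0) \in B_n$. By Proposition \ref{fiber structure}, once a single lift of $b_0$ is fixed the fiber $p^{-1}(b_0)$ is canonically identified with $T^n \cong \prod_{P \in \mc P_0} S^1$, so it suffices to produce a continuous local section $b \mapsto (b, U(b))$ of $p$ on an open neighborhood $V$ of $b_0$; the map
\[
\phi \colon V \times T^n \to p^{-1}(V), \qquad \phi\bigl(b,(z_P)_{P\in\mc P_0}\bigr) = \Bigl(b,\ \sum_{P\in\mc P_0} z_P\, \wt\sigma\tau_b(P)\, U(b)\, \tau_b(P)\Bigr)
\]
will then be the desired trivialization, where $\tau_b \colon \mc P_0 \to \wt{\mc P}$ is a canonical continuous family of bijections described below.

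To construct $V$, $\tau_b$, and $U(b)$, let $V$ be the open $d$-ball of radius $1/4$ about $b_0$. Lemma \ref{technical}(i) supplies, for each $b = (\wt{\mc P}, \wt{\mc Q}, \wt\sigma) \in V$, a unique bijection $\tau_b \colon \mc P_0 \to \wt{\mc P}$ realizing the minimum in $d(b,b_0)$, with $\Vert P - \tau_b(P)\Vert$ and $\Vert\sigma_0(P) - \wt\sigma\tau_b(P)\Vert$ both less than $1/2$, and Lemma \ref{technical}(ii) guarantees that $b \mapsto \tau_b(P)$ is continuous for each $P \in \mc P_0$. Now fix any lift $U_0$ of $b_0$ and produce continuously varying unitaries $W_b, W'_b$ with $W_b P W_b^* = \tau_b(P)$ and $W'_b \sigma_0(P) (W'_b)^* = \wt\sigma\tau_b(P)$ for every $P \in \mc P_0$. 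A clean construction is to take $W_b$ to be the unitary factor in the polar decomposition of the operator $M_b := \sum_{P\in\mc P_0}\tau_b(P) P$; Lemma \ref{close} together with $\Vert P - \tau_b(P)\Vert < 1$ ensures that $M_b$ is invertible, and continuity of polar decomposition on invertible matrices makes $b \mapsto W_b$ continuous. A short calculation using orthogonality of $\mc P_0$ and $\wt{\mc P}$ shows that $W_b$ restricts to an isometry from $\ran P$ onto $\ran \tau_b(P)$, so $W_b P W_b^* = \tau_b(P)$. The same recipe applied to $\mc Q_0$ and $\wt{\mc Q}$ yields $W'_b$, and then $U(b) := W'_b U_0 W_b^*$ satisfies $U(b)\tau_b(P)U(b)^* = \wt\sigma\tau_b(P)$, giving the required section.

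It then remains to verify that $\phi$ is a homeomorphism. Fiberwise bijectivity is immediate from Proposition \ref{fiber structure}, and the defining formula shows $\phi$ is continuous. The inverse is continuous because each coordinate $z_P$ can be recovered as the unique scalar satisfying $\wt\sigma\tau_b(P)\wt U \tau_b(P) = z_P\, \wt\sigma\tau_b(P)\, U(b)\, \tau_b(P)$ on the one-dimensional range $\ran\tau_b(P)$, an operation continuous in both $b$ (via $\tau_b$ and $U(b)$) and $\wt U$. The only real obstacle in this argument is the production of the continuously varying conjugating unitaries $W_b$ and $W'_b$; once they are in hand, the rest follows formally from Proposition \ref{fiber structure} together with the continuity provided by Lemma \ref{technical}(ii).
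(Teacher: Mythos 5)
Your proof is correct and follows essentially the same route as the paper's: the same $1/4$-ball neighborhood, the same canonical bijections $\tau_b$ supplied by Lemma \ref{technical}, the same transport of ranges via Lemma \ref{close}, and the same appeal to Proposition \ref{fiber structure} to coordinatize the fibers. The only packaging difference is that you realize the transport as the unitary factor in the polar decomposition of $\sum_{P}\tau_b(P)P$, giving an explicit continuous local section $b\mapsto U(b)$, whereas the paper works directly with the transported unit vectors $(I+\widetilde{\tau}(P)-P)v_P$ and defines the torus coordinates by inner products; since your $W_b$ sends $v_P$ to exactly the paper's normalized transported vector $\tau_b(P)v_P/\Vert\tau_b(P)v_P\Vert$, the two constructions in fact coincide.
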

 
 \begin{proof}
 Fix an element $\bigl((\mathcal{P}, \mathcal{Q}, \sigma), U\bigr)$ of $E_n$.  For each $P$ in $\mathcal{P}$,
 choose unit vectors $v_P$ and $w_P$ in $\ran P$ and $\ran \sigma(P)$ respectively.  Set 
\[
 \mathcal{O} = \bigl\{ (\widetilde{\mathcal{P}}, \widetilde{\mathcal{Q}}, \widetilde{\sigma}) \in B_n : 
 d\bigl((\mathcal{P}, \mathcal{Q}, \sigma), 
 (\widetilde{\mathcal{P}}, \widetilde{\mathcal{Q}}, \widetilde{\sigma})\bigr) < 1\slash4\bigr\} 
 \]
and take  
$\bigl((\widetilde{\mathcal{P}}, \widetilde{\mathcal{Q}}, \widetilde{\sigma}), \widetilde{U}\bigr)$ in $p^{-1}(\mathcal{O})$.
Let $\widetilde{\tau}: \mathcal{P} \longrightarrow \widetilde{\mathcal{P}}$ be the bijection that realizes the minimum for 
 $d\bigl((\mathcal{P}, \mathcal{Q}, \sigma),  (\widetilde{\mathcal{P}}, \widetilde{\mathcal{Q}}, \widetilde{\sigma})\bigr)$.
Lemma \ref{close} shows that $I + \widetilde{\tau}(P) - P$ maps $\ran P$ isomorphically onto
$\ran \widetilde{\tau}(P)$ and $I + \widetilde{\sigma}\widetilde{\tau}(P) - \sigma(P)$ maps $\ran \sigma(P)$
isomorphically onto $\ran \widetilde{\sigma}\widetilde{\tau}(P)$ for every $P$ in $\mathcal{P}$.  In particular,
$(I + \widetilde{\tau}(P) - P)v_P$ and $(I + \widetilde{\sigma}\widetilde{\tau}(P) - \sigma(P))w_P$ are nonzero. For each
$P$ in $\mathcal{P}$, the complex vector spaces $\ran \widetilde{\tau}(P)$ and $\ran \widetilde{\sigma}\widetilde{\tau}(P)$ are
one-dimensional, and so $\widetilde{U}$ maps $\ran \widetilde{\tau}(P)$ isomorphically to $\ran \widetilde{\sigma}\widetilde{\tau}(P)$.  
Furthermore, unitary matrices map unit vectors to unit vectors, so for each $P$ in $\mathcal{P}$, the quantity
\[
z_{\widetilde{\tau}, P} = 
\left\langle \widetilde{U}\left(\frac{(I + \widetilde{\tau}(P) - P)v_P}{\Vert(I + \widetilde{\tau}(P) - P)v_P\Vert}\right),
\frac{(I + \widetilde{\sigma}\widetilde{\tau}(P) - \sigma(P))w_P}{\Vert (I + \widetilde{\sigma}\widetilde{\tau}(P) - \sigma(P))w_P\Vert}\right\rangle
\]
has modulus $1$.   Write $T^n$ as $\prod_{P \in \mathcal{P}}S^1$ and define
$\phi: p^{-1}(\mathcal{O}) \longrightarrow \mathcal{O} \times T^n$ by
\[
\phi\bigl((\widetilde{\mathcal{P}}, \widetilde{\mathcal{Q}}, \widetilde{\sigma}), \widetilde{U}\bigr) = 
\left((\widetilde{\mathcal{P}}, \widetilde{\mathcal{Q}}, \widetilde{\sigma}), \bigoplus_{P\in\mathcal{P}}z_{\widetilde{\tau}, P}\right).
\]
To show that $\phi$ is continuous, it clearly suffices to prove that the map 
$\bigl((\widetilde{\mathcal{P}}, \widetilde{\mathcal{Q}}, \widetilde{\sigma}), \widetilde{U}\bigr) \mapsto z_{\widetilde{\tau}, P}$ 
is continuous for each $P$ in $\mathcal{P}$.  Define $\Phi_P: p^{-1}(\mathcal{O}) \longrightarrow \mathbb{C}^n$ by the formula
$\Phi_P\bigl((\widetilde{\mathcal{P}}, \widetilde{\mathcal{Q}}, \widetilde{\sigma}), \widetilde{U}\bigr) = (I + \widetilde{\tau}(P) - P)v_P$.
Suppose that $\bigl((\widetilde{\mathcal{P}}, \widetilde{\mathcal{Q}}, \widetilde{\sigma}), \widetilde{U}\bigr)$ and 
$\bigl((\widehat{\mathcal{P}}, \widehat{\mathcal{Q}}, \widehat{\sigma}), \widehat{U}\bigr)$ are in $p^{-1}(\mathcal{O})$ and that
$d\bigl((\widetilde{\mathcal{P}}, \widetilde{\mathcal{Q}}, \widetilde{\sigma}), 
(\widehat{\mathcal{P}}, \widehat{\mathcal{Q}}, \widehat{\sigma})\bigr) < \epsilon$.  Using the result of, as well as the notation from,
Lemma \ref{technical}(ii), we obtain
\begin{align*}
\Vert \Phi_P\bigl((\widetilde{\mathcal{P}}, \widetilde{\mathcal{Q}}, \widetilde{\sigma}), \widetilde{U}\bigr) -
\Phi_P\bigl((\widehat{\mathcal{P}}, \widehat{\mathcal{Q}}, \widehat{\sigma}), \widehat{U}\bigr)\Vert
&= \Vert (I + \widetilde{\tau}(P) - P)v_P - (I + \widehat{\tau}(P) - P)v_P\Vert \\
&= \Vert (\widetilde{\tau}(P) - \widehat{\tau}(P))v_P\Vert \\
&\leq \Vert \widetilde{\tau}(P) - \widehat{\tau}(P)\Vert \\
&< \epsilon,
\end{align*}
and so each $\Phi_P$ is continuous.  The formula for each $z_{\widetilde{\tau}, P}$ 
is therefore a composition of continuous functions, and thus the map
$\bigl((\widetilde{\mathcal{P}}, \widetilde{\mathcal{Q}}, \widetilde{\sigma}), \widetilde{U}\bigr) \mapsto z_{\widetilde{\tau}, P}$ 
is continuous.

Next, define $\psi: \mathcal{O} \times T^n \longrightarrow p^{-1}(\mathcal{O})$ in the following way:
take $(\widetilde{\mathcal{P}}, \widetilde{\mathcal{Q}}, \widetilde{\sigma})$ in $\mathcal{O}$ and let
$\widetilde{\tau}$, $v_P$, and $w_P$ be as above.  Suppose 
\[
\left((\widetilde{\mathcal{P}}, \widetilde{\mathcal{Q}}, \widetilde{\sigma}), 
\bigoplus_{P\in\mathcal{P}}\zeta_P\right)
\]
is in $\mathcal{O} \times T^n$.  The set of vectors
\[
\left\{\frac{(I + \widetilde{\tau}(P) - P)v_P}{\Vert(I + \widetilde{\tau}(P) - P)v_P\Vert} : P \in \mathcal{P}\right\} =
\left\{\frac{(I + \widetilde{P} - \widetilde{\tau}^{-1}(\widetilde{P}))v_{\widetilde{\tau}^{-1}(\widetilde{P})}}
{\Vert((I + \widetilde{P} - \widetilde{\tau}^{-1}(\widetilde{P}))v_{\widetilde{\tau}^{-1}(\widetilde{P})}\Vert} : \widetilde{P} \in \widetilde{\mathcal{P}}\right\}
\]
spans $\mathbb{C}^n$, so we can define a unitary matrix $\widetilde{U}$ by setting
\[
\widetilde{U}\left(\frac{(I + \widetilde{P} - \widetilde{\tau}^{-1}(\widetilde{P}))v_{\widetilde{\tau}^{-1}(\widetilde{P})}}
{\Vert(I + \widetilde{P} - \widetilde{\tau}^{-1}(\widetilde{P}))v_{\widetilde{\tau}^{-1}(\widetilde{P})}\Vert} \right) = 
\zeta_{\widetilde{\tau}^{-1}(\widetilde{P})}\left(\frac{(I + \widetilde{\sigma}(\widetilde{P}) - 
\sigma\widetilde{\tau}^{-1}(\widetilde{P}))w_{\widetilde{\tau}^{-1}(\widetilde{P})}}
{\Vert (I + \widetilde{\sigma}(\widetilde{P}) - \sigma\widetilde{\tau}^{-1}(\widetilde{P}))w_{\widetilde{\tau}^{-1}(\widetilde{P})}\Vert} \right)
\]
for each $\widetilde{P}$ in $\widetilde{\mathcal{P}}$.  
Lemma \ref{close} implies that
 $\widetilde{U}$ maps $\ran \widetilde{P}$ to
$\ran \widetilde{\sigma}(\widetilde{P})$ for each $\widetilde{P}$ in $\widetilde{\mathcal{P}}$, and so
$\widetilde{U}\widetilde{P}\widetilde{U}^* = \widetilde{\sigma}(\widetilde{\mathcal{P}})$.  Thus
$\bigl((\widetilde{\mathcal{P}}, \widetilde{\mathcal{Q}}, \widetilde{\sigma}), \widetilde{U}\bigr)$ is in $p^{-1}(\mathcal{O})$,
and we define
\[
\psi\left((\widetilde{\mathcal{P}}, \widetilde{\mathcal{Q}}, \widetilde{\sigma}), 
\bigoplus_{P\in\mathcal{P}}\zeta_P\right) = 
\bigl((\widetilde{\mathcal{P}}, \widetilde{\mathcal{Q}}, \widetilde{\sigma}), \widetilde{U}\bigr).
\]
As with $\phi$, Lemma \ref{technical}(ii) implies that $\psi$ is continuous.  The maps $\psi$ and $\phi$ are inverses of one another
and thus $\phi$ is a homeomorphism.
 \end{proof}
 
We remark that because $B_n$ is a metric space, it is paracompact \cite[Theorem 41.4]{MK2} and Hausdorff \cite[Section 21]{MK2}, and thus $p: E_n \longrightarrow B_n$ is a
 fibration for each natural number $n$ \cite[Corollary 2.7.14]{Spanier}; we will need this fact in Chapter 6. 
 
\section{Unitary equivalence of  normal matrices}\label{S: equiv}

We now return to our study of matrices.  Let $X$ be a topological space.  Recall that $C(X)$ is the $\mathbb{C}$-algebra of complex-valued continuous 
functions on $X$ and that $M_n(C(X))$ is the ring of $n$-by-$n$ matrices with entries in $C(X)$. For $A\in M_n(C(X))$, we define the adjoint of $A$ pointwise, 
and $A$ is defined to be \emph{normal} if $AA^*=A^*A$. The matrix $A$ is \emph{multiplicity free} if, for each $x\in X$, the eigenvalue of $A(x)$ are distinct. 

Suppose $A$ and $B$ in $M_n(C(X))$ are normal, multiplicity-free, and have the same characteristic polynomial. Then for each $x$ in $X$, the matrices 
$A(x)$ and $B(x)$ have the same distinct eigenvalues.  This set of eigenvalues does not come with a natural ordering.  However, given an eigenvalue
$\lambda$ of $A(x)$, we can associate to $\lambda$ the spectral projection $P(x)_\lambda$ of $A(x)$; that is, the orthogonal projection of 
$\mathbb{C}^n$ onto the $\lambda$-eigenspace of $A(x)$.  Similarly, we can associate to $\lambda$ the spectral projection $Q(x)_\lambda$ of $B(x)$. 
We thus have a bijection from the set $\mathcal{P}$ of spectral projections of $A(x)$ to the set $\mathcal{Q}$ of spectral projections of $B(x)$. 
This determines an element of $B_n$.  The spectral projections of $A(x)$ and $B(x)$ vary continuously as functions of $x$, and therefore 
we can assign to the pair $(A, B)$ a continuous map $\Phi_{A,B}: X \longrightarrow B_n$.  

\begin{proposition}\label{P: U equiv}
Matrices $A$ and $B$ in $M_n(C(X))$ that are normal, multiplicity-free, and have the same characteristic polynomial are unitarily equivalent
if and only if $\Phi_{A,B}: X \longrightarrow B_n$ lifts to a continuous map $\widetilde{\Phi}_{A,B}: X \longrightarrow E_n$.
\end{proposition}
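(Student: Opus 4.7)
The plan is to observe that a lift of $\Phi_{A,B}$ to $E_n$ encodes exactly the same data as a global continuous unitary conjugating $A$ to $B$, once I reconcile the convention $B = U^*AU$ used for unitary equivalence with the convention $UPU^* = \sigma(P)$ used in the definition of $E_n$. First I would recall the setup: because $A$ and $B$ are multiplicity free with identical characteristic polynomials, at each $x \in X$ the spectral decompositions $A(x) = \sum_\lambda \lambda P_\lambda(x)$ and $B(x) = \sum_\lambda \lambda Q_\lambda(x)$ run over a common indexing set of eigenvalues, and $\Phi_{A,B}(x)$ consists of the sets $\{P_\lambda(x)\}$ and $\{Q_\lambda(x)\}$ together with the bijection $\sigma_x$ determined by $\sigma_x(P_\lambda(x)) = Q_\lambda(x)$.

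For the forward direction, I would assume $U \in U_n(C(X))$ satisfies $B = U^*AU$ and set $V := U^*$, so that $VAV^* = B$ pointwise. For each $x$ and each eigenvalue $\lambda$, conjugation by $V(x)$ carries the $\lambda$-eigenspace of $A(x)$ bijectively onto the $\lambda$-eigenspace of $B(x)$, which is precisely the assertion $V(x) P_\lambda(x) V(x)^* = Q_\lambda(x)$. Hence $\widetilde{\Phi}_{A,B}(x) := (\Phi_{A,B}(x), V(x))$ lands in $E_n$, and it is continuous because $\Phi_{A,B}$ and $V$ are continuous and $E_n$ carries the subspace topology inherited from $B_n \times U_n$; since $p$ is the projection onto the first factor, this map is a lift of $\Phi_{A,B}$ by construction.

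For the reverse direction, I would take a continuous lift $\widetilde{\Phi}_{A,B}$ and compose with the second-factor projection $E_n \hookrightarrow B_n \times U_n \to U_n$ to obtain a continuous map $V: X \to U_n$, i.e., an element of $U_n(C(X))$. The defining condition of $E_n$ then gives $V(x) P_\lambda(x) V(x)^* = Q_\lambda(x)$ at every $x$ and for every eigenvalue $\lambda$; multiplying by $\lambda$ and summing over the eigenvalues at $x$ yields $V(x) A(x) V(x)^* = B(x)$, and setting $U := V^*$ produces $B = U^*AU$ and thus the desired unitary equivalence.

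I expect this to be essentially bookkeeping rather than technically difficult; the only minor obstacle worth flagging is keeping the two conventions ($B = U^*AU$ versus $UPU^* = \sigma(P)$) straight, which I would resolve throughout by passing between $U$ and $V = U^*$. Everything else reduces to the spectral theorem applied pointwise together with the observation that the subspace topology on $E_n \subset B_n \times U_n$ makes both extraction and construction of the unitary component automatically continuous.
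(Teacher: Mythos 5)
Your proof is correct and follows essentially the same argument as the paper's: pointwise, a conjugating unitary carries spectral projections to spectral projections, so it packages as a continuous lift, and conversely the second-factor projection of a lift produces a continuous unitary that conjugates each spectral projection and hence $A$ to $B$. The only cosmetic difference is that you explicitly reconcile the $B = U^*AU$ convention with $UPU^* = \sigma(P)$ by passing to $V = U^*$, whereas the paper implicitly uses the unitary that already satisfies $UAU^* = B$.
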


\begin{proof}
If $UAU^* = B$ for some $U$ in $U_n(C(X))$, then, by basic linear algebra,  for each $x$ in $X$, the unitary matrix $U(x)$ conjugates each spectral projection
of $A(x)$ to the corresponding spectral projection of $B(x)$; that is, we have $U(x)P(x)_\lambda U(x)^* = Q(x)_\lambda$ for all $x$ and 
$\lambda$.  Therefore $(\Phi_{A,B}(x), U(x))$ is an element of $E_n$ for each $x$ in $X$, and we can define 
$\widetilde{\Phi}_{A,B}: X \longrightarrow E_n$ by $\widetilde{\Phi}_{A,B}(x)=(\Phi_{A,B}(x), U(x))$. This is continuous because the assignments
$x\mapsto \Phi_{A,B}(x)$ and $x\mapsto U(x)$ are continuous by definition.

Conversely, suppose that $p\widetilde{\Phi}_{A,B} = \Phi_{A,B}$ for some continuous map $\widetilde{\Phi}_{A,B}: X \longrightarrow E_n$.  For each $x$ in $X$, 
write $\widetilde{\Phi}_{A,B}(x) = (\Phi_{A,B}(x),U(x))$.  For each eigenvalue $\lambda$ of $A(x)$ and $B(x)$, we have $U(x)P(x)_\lambda U(x)^* = Q(x)_\lambda$
by the definitions of $\Phi_{A,B}$ and $E_n$, and thus $U(x)A(x)U(x)^* = B(x)$ for each $x$ in $X$.
The assignment $x \longmapsto U(x)$ is a continuous map from $X$ to $U_n$ that defines an element $U$ in $U_n(C(X))$, and $UAU^* = B$. 
\end{proof}

\subsection{Cohomology with local coefficients}
Proposition \ref{P: U equiv} tells us that to approach the question of whether $A$ is unitarily equivalent to $B$, we need to know when the map $\Phi_{A,B}$ can be
lifted to the bundle $E_n$.  In order to do this, we will employ obstruction theory, which utilizes \emph{cohomology with local coefficients}.  We sketch the basic 
ideas of cohomology with local coefficients here and refer the interested reader to \cite[Chapter 5]{DK}, \cite[Section 3.H]{Ha}, or \cite[Chapter VI]{Wh} for
more information. In fact, there are two equivalent approaches, both of which will be useful for us. To describe the first, let $\Gamma$ be a group and suppose
we have a representation $\rho$ of $\Gamma$ on an abelian group $A$; i.e.,
a group homomorphism $\rho: \Gamma \longrightarrow \Aut(A)$.  Then $A$ is a left $\mathbb{Z}\Gamma$-module via the action
\[
\left(\sum_{g\in\Gamma} m_g g\right)\cdot a = \sum_{g\in\Gamma} m_g\rho(g)(a);
\]
we often write $A$ as $A_\rho$ to highlight the dependence of the module action on the choice of $\rho$.
Now suppose $X$ is a connected\footnote{The assumption that $X$ be connected is not essential; if $X$ has multiple connected components, each component
can be treated individually. Alternatively, though more technically advanced, one could replace fundamental groups in this discussion with fundamental groupoids.} 
topological space with universal cover $\widetilde{X}$ and basepoint $x_0$. Let $\Gamma=\pi_1(X,x_0)$, and let $S_*(\widetilde{X})$ denote the integral singular
chain complex over $\widetilde{X}$. The groups  $S_*(\widetilde{X})$ are modules over $\mathbb{Z}\Gamma$ by the action of the covering transformations. 
The \emph{cohomology $H^*(X; A_\rho)$ of $X$ with local coefficients in $A$} is the cohomology of the cochain complex 
$\Hom_{\mathbb{Z}\Gamma}(S_*(\widetilde{X}), A)$. If the representation $\rho$ is trivial, then $H^*(X; A_\rho)$ is just $H^*(X;A)$, 
the ordinary cohomology of $X$ with coefficients in the abelian group $A$.   

Equivalently, representations $\pi_1(X,x_0) \longrightarrow \Aut(A)$ correspond to isomorphism classes of bundles over $X$ with fiber $A$; see
\cite[Theorems VI.1.11 and VI.1.12]{Wh}. If $\Pi$ is such a bundle of groups over $X$ corresponding to $A_\rho$, then $H^*(X; \Pi)\cong H^*(X; A_\rho)$ 
can be described via cochains whose values on singular simplices correspond to lifts of the singular simplices  to $\Pi$. See 
\cite[Section 3.H]{Ha} for more details. Yet another approach, utilized in \cite[Section VI.2]{Wh}, is to think of a singular cochain as assigning to a singular chain
$\sigma:\Delta^k \longrightarrow X$ a value in the fiber over $\sigma(v_0)$, where $v_0$ is the initial vertex of $\Delta^k$. Of course, this is equivalent 
to prescribing a lift of all of $\sigma$, as $\Pi$ is a covering space of $X$. With some more effort, suitable versions of cellular cohomology with systems
of local coefficients can be defined; see \cite[Section VI.4]{Wh}. 

Now, suppose we have a fibration $p:E\longrightarrow X$ with fibers $F_x$ over $x\in X$.  Furthermore, assume that the  $F_x$ are $k$-simple, which means
that the action of  $\pi_1(F_x)$ on $\pi_k(F_x)$ is trivial. This $k$-simplicity implies that there are canonical isomorphisms $\pi_k(F_x,f_{x,0})\cong \pi_k(F_x,f_{x,1})$
for any two basepoints $f_{x,0},f_{x,1}\in F_x$. In fact, we obtain bijections $\pi_k(F_x,f_{x,0}) \longrightarrow [S^k,F_x]$, the set of free homotopy classes of
maps from $S^k$ to $F_x$ \cite[Corollary 6.60]{DK}, so we don't have to worry about basepoints in the fibers at all. As a consequence, the fibration 
$p:E \longrightarrow X$ yields a bundle of groups $\pi_k(\mc F)$ over $X$ with fibers $[S^1,F_x]\cong \pi_1(F_x)$; see \cite[Proposition 6.62]{DK} or
\cite[Example VI.1.4]{Wh}. Bundles of groups arising in this way also possess nice topological descriptions when considered as groups with representations 
of $\pi_1(X,x_0)$: Let $F_0$ denote the fiber over the basepoint $x_0\in X$, and consider $\pi_k(F_0)\cong [S^k, F_0]$.  If we have an element of $\pi_k(F_0)$ 
represented by a map $h_0:S^k \longrightarrow F_0$, then the homotopy lifting property of fibrations implies that  a loop $\gamma$ in $X$ determines
(uniquely up to homotopies) an extension of $h_0$ to $H: S^k \times I \longrightarrow E$ over $\gamma$. If $h_0 = H|_{S^k\times \{0\}}$, 
then $H|_{S^k \times \{1\}}$ determines a new map $h_1= H|_{S^k\times \{1\}}:S^k \longrightarrow F_{0}$. So this lifting process determines a map
$\rho:\pi_1(X,x_0) \longrightarrow \Aut(\pi_k(F_{0}))$ by $\gamma \mapsto ([h_0]\to[h_1])$. If we denote $\pi_k(F_{{0}})$ with this action of $\pi_1(X,x_0)$ by
$\pi_k(F_{{0}})_\rho$, the categorical equivalence between bundles of groups over $X$ and groups possessing $\pi_1(X,x_0)$ actions identifies $\pi_k(\mc F)$
with $\pi_k(F_{{0}})_\rho$. The reader should consult \cite{DK} or \cite{Wh} for further details.

\subsection{Back to matrices}
Now, returning to matrices, let $\Phi_{A,B}: X\longrightarrow B_n$ be as above for two normal multiplicity-free matrices in $M_n(C(X))$ with the same
characteristic polynomial, and let $\Phi_{A,B}^*E_n$ be the pullback of $E_n$. Because the fibers of $E_n$ are homeomorphic to the torus $T^n$, so are 
the fibers $F_x$ of $\Phi_{A,B}^*E_n$ over $X$, and $\pi_1(F_x)\cong \Z^n$. As $\Z^n$ is abelian, the group $\pi_1(F_x)$ acts trivially on itself by conjugation
(see \cite[Exercise 114]{DK}), so  $F_x$ is $1$-simple.
Therefore, we can form the bundle of groups $\pi_1(F_x)$, and we will denote this bundle of groups by $\Pi_{A,B}$. 

\begin{theorem}\label{T: U equiv}
Let $X$ be a connected CW complex, and suppose $A$ and $B$ are normal multiplicity-free matrices
in $M_n(C(X))$ that have the same characteristic polynomial.  Then there exists a  unique cohomology class 
$[\theta(A,B)] \in H^2(X; \Pi_{A,B})$ such that $A$ and $B$ are unitarily equivalent if and only if $[\theta(A,B)]=0$. 
\end{theorem}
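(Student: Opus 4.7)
The plan is to combine Proposition \ref{P: U equiv} with the standard obstruction theory for lifting maps along a fibration. By Proposition \ref{P: U equiv}, $A$ and $B$ are unitarily equivalent if and only if $\Phi_{A,B}$ lifts to $E_n$, equivalently if and only if the pulled-back fibration $\Phi_{A,B}^* E_n \to X$ admits a global section (the pullback is a fibration because $p$ is, as noted after Proposition \ref{fiber}). So it suffices to produce a single cohomology class in $H^2(X;\Pi_{A,B})$ whose vanishing controls the existence of such a section.

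I would construct the section cell by cell and track the obstructions that arise. The fiber $F = T^n$ is path-connected and, being abelian, is $k$-simple for all $k$, so the local coefficient systems $\pi_k(\mc F)$ are well-defined bundles of groups over $X$. A section exists over $X^{(0)}$ because the fibers are nonempty, and it extends over $X^{(1)}$ because $\pi_0(T^n)=0$. The first genuine obstruction appears when we try to extend over the $2$-skeleton: evaluating the partial section on the attaching map of each $2$-cell yields a loop in the corresponding fiber whose class in $\pi_1(F)\cong \Z^n$ provides the value of an obstruction cochain on that cell. Standard arguments (see \cite[Chapter VI]{Wh}) show that this cochain is a cocycle, that its cohomology class $[\theta(A,B)] \in H^2(X;\Pi_{A,B})$ is independent of the partial section chosen on $X^{(1)}$, and that it vanishes exactly when the partial section can be modified so as to extend over $X^{(2)}$. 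No higher obstructions appear: since $T^n$ is a $K(\Z^n,1)$, all $\pi_k(T^n)$ vanish for $k \geq 2$, so the higher obstruction classes in $H^{k+1}(X;\pi_k(\mc F))$ are automatically zero, and every section over $X^{(2)}$ extends inductively to all of $X$. Combined with Proposition \ref{P: U equiv}, this yields the biconditional, and uniqueness of $[\theta(A,B)]$ is part of the standard theory.

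I expect the main subtlety to be the bookkeeping required to verify that the local coefficient system produced by the obstruction theory is precisely $\Pi_{A,B}$ as defined just before the theorem, i.e., that the monodromy representation $\pi_1(X,x_0) \to \Aut(\Z^n)$ coming from parallel transport of $\pi_1$ of the fibers of $\Phi_{A,B}^* E_n$ matches the bundle of groups already associated to the fibration. This should follow from the functorial correspondence between bundles of groups and $\pi_1$-representations recalled in the previous subsection, together with the explicit fiberwise trivialization $p^{-1}((\mc P,\mc Q,\sigma))\cong T^n$ given in Proposition \ref{fiber structure}, but writing the identification down carefully is where the real care lies.
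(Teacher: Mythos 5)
Your proposal is correct and follows essentially the same route as the paper: reduce to a lifting problem via Proposition \ref{P: U equiv}, then run obstruction theory for the fibration with fiber $T^n$, so that the only possibly nonzero obstruction lives in $H^2(X;\Pi_{A,B})$ because $T^n$ is a $K(\Z^n,1)$. The one step you defer to ``standard arguments'' that the paper spells out explicitly is the independence of the class from the choice of lift over the $0$-skeleton: any two such lifts are vertically homotopic since the fibers are path connected, so the resulting obstruction cocycles are cohomologous by \cite[Theorem VI.5.6.3]{Wh}.
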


\begin{proof}
The proof is by obstruction theory. We recall the relevant theorem\footnote{Our particular statement  is a hybrid of the phrasings and notations in 
\cite{DK} and \cite{Wh}.}; see \cite[Theorem 7.37]{DK} and \cite[Corollary 5.7]{Wh}: Given a CW complex $X$, a fibration $p:E \longrightarrow Y$ 
with fiber $F$, and a map $f:X \longrightarrow Y$, suppose that $\widetilde{f}^k : X^k \longrightarrow E$ is a lift of $f$ over the $k$-skeleton  $X^k$ of $X$.  
Further, suppose that $F$ is $k$-simple.  Let $\pi_k(\mc F)$ denote the $\pi_k(F)$ bundle associated to $f^*E$ over $X$. 
Then there is an \emph{obstruction class} $[\theta^{k+1}(\widetilde{f}^k)]$ in the cohomology group
$H^{k+1}(X;\pi_k(\mc F))$ such that  $[\theta^{k+1}(\widetilde{f}^k)] = 0$ if and only if the restriction
$\widetilde{f}^k|_{X^{k-1}}$ can be extended to a lifting of $f$ over $X^{k+1}$.  

In our situation, the fiber $F$ is homeomorphic to $T^n$, so $\pi_k(F)$ is trivial unless $k = 1$, in which case
$\pi_1(F) \cong \Z^n$. Thus $F$ is trivially $k$-simple for $k \neq 1$. 
For $k = 1$, we obtain the bundle of groups $\Pi_{A,B}$ over $X$, as described above. 

Now consider $\Phi_{A,B}: X \longrightarrow B_n$. We can 
construct a lift $\widetilde{\Phi}_{A,B}^0 : X^0 \longrightarrow E_n$ by just choosing a point
$\bigl((\mathcal{P}, \mathcal{Q}, \sigma), U\bigr)$ in $p^{-1}(\Phi_{A,B}(x))$
for each $x$ in $X^0$.
Since $\pi_0(F)$ is trivial, the obstruction theorem ensures that there is a continuous map
$\widetilde{\Phi}_{A,B}^1 : X^1 \longrightarrow E_n$ lifting $\Phi_{A,B}$ over the $1$-skeleton $X^1$ of $X$. Now we
encounter an obstruction $[\theta^{2}(\widetilde{\Phi}_{A,B}^1)]$ in $H^{2}(X; \Pi_{A,B})$. 
The obstruction theorem says that this class vanishes if and only if 
$\widetilde{\Phi}_{A,B}^1|_{X^0}$ extends to a lift $\widetilde{\Phi}_{A,B}^2: X^2 \longrightarrow E_n$. If 
$[\theta^{2}(\widetilde{\Phi}_{A,B}^1)] = 0$, then such a $\widetilde{\Phi}_{A,B}^2$ exists.  Furthermore, 
because $\pi_k(F)$ vanishes for $k > 1$, there are no other
obstructions to lifting $\Phi_{A,B}$ on all of $X$ to obtain a map $\widetilde{\Phi}_{A,B}: X \longrightarrow E_n$.

Our construction of the obstruction $[\theta^{2}(\widetilde{\Phi}_{A,B}^1)]$ ostensibly depends on our choices of
$\widetilde{\Phi}_{A,B}^0$ and $\widetilde{\Phi}_{A,B}^1$. 
First, let $\widetilde{\Phi}_{A,B}^0$ and $\widehat{\Phi}_{A,B}^0$ be two lifts of $\Phi_{A,B}$ over the $0$-skeleton. 
These lifts are \emph{vertically (or fiber-wise) homotopic} (see \cite[page 291]{Wh}), because any two lifts of a vertex of $X^0$
lie in the same fiber over $B_n$ and so can be connected by a path in that fiber, which is homeomorphic to $T^n$ and hence is path connected. 
Second, let $\widetilde{\Phi}_{A,B}^1$ and $\widehat{\Phi}_{A,B}^1$ denote the lifts of $\widetilde{\Phi}_{A,B}^0$ and $\widehat{\Phi}_{A,B}^0$ 
on $X^1$ guaranteed by the obstruction theorem.
By the same argument that we just used above, the restrictions $\widetilde{\Phi}_{A,B}^1|_{X^0}$ and 
$\widehat{\Phi}_{A,B}^1|_{X^0}$ are vertically homotopic. This puts us in the setting of  \cite[Theorem VI.5.6.3]{Wh}, which implies that 
$\theta^{2}(\widetilde{\Phi}_{A,B}^1)$ and $\theta^{2}(\widehat{\Phi}_{A,B}^1)$ are cohomologous. 
Thus the obstruction cohomology class in $H^{2}(X; \Pi_{A,B})$ is independent of our choices 
in the construction. Denoting this class by $[\theta(A,B)]$, we have shown that 
$\Phi_{A,B}$ possesses a lifting if and only if $[\theta(A, B)] = 0$. 
Thus by Proposition \ref{P: U equiv}, the matrices $A$ and $B$ are unitarily equivalent if and only if $[\theta(A,B)] = 0$.
\end{proof} 

An immediate corollary is a strengthening of Grove and Pedersen's \cite[Theorem 1.4]{GP}, which implies that if $X$ is a $2$-connected compact 
CW complex then any multiplicity-free normal $A$ in $M_n(C(X))$ can be diagonalized. 

\begin{corollary}\label{C: GP cor}
If $X$ is a simply-connected (not necessarily compact) CW complex and $\Hom(H_2(X),\Z)=0$ (in particular if $H_2(X)$ is torsion), 
then any two normal multiplicity-free matrices $A$ and $B$ in $M_n(C(X))$ with the same eigenvalues at each point are unitarily equivalent. 
In particular, any normal multiplicity-free matrix in $M_n(C(X))$ is diagonalizable.
\end{corollary}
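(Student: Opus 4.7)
The plan is to reduce the corollary to a vanishing statement for the obstruction class provided by Theorem \ref{T: U equiv}, and then verify this vanishing using elementary homological algebra.

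First I would observe that if $A$ and $B$ are normal multiplicity-free matrices in $M_n(C(X))$ with the same set of eigenvalues at each point of $X$, then they have identical characteristic polynomials in $C(X)[\lambda]$, since at each $x\in X$ both characteristic polynomials are $\prod_\lambda (\mu - \lambda)$ over the common eigenvalue set. Thus Theorem \ref{T: U equiv} applies and produces the obstruction class $[\theta(A,B)] \in H^2(X; \Pi_{A,B})$ whose vanishing is equivalent to unitary equivalence. The strategy is therefore to show that $H^2(X;\Pi_{A,B})=0$ under the stated hypotheses.

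Since $X$ is simply connected, every representation $\pi_1(X)\to \Aut(\Z^n)$ is trivial, so the local coefficient system $\Pi_{A,B}$ reduces to the constant system $\Z^n$ and $H^2(X;\Pi_{A,B})\cong H^2(X;\Z)^n$. Next I would apply the universal coefficient theorem:
\[
H^2(X;\Z) \cong \Hom(H_2(X),\Z)\oplus \Ext(H_1(X),\Z).
\]
Simple connectedness gives $H_1(X)=0$ (the abelianization of the trivial group), killing the $\Ext$ summand, while $\Hom(H_2(X),\Z)=0$ is given by hypothesis. (The parenthetical assumption that $H_2(X)$ is torsion is the standard sufficient condition for the $\Hom$ group to vanish.) Therefore $H^2(X;\Pi_{A,B})=0$, the obstruction class is forced to be zero, and $A$ and $B$ are unitarily equivalent.

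For the ``in particular'' statement on diagonalizability, I need to produce a diagonal matrix $D\in M_n(C(X))$ with the same eigenvalues as $A$ and then invoke what I just proved. Here the main subtle point is to show that the unordered set of eigenvalues, which varies continuously on $X$, can be consistently enumerated as $n$ continuous complex-valued functions $d_1,\dots,d_n$ on $X$. Since $A$ is multiplicity-free, the assignment sending $x\in X$ to its set of $n$ distinct eigenvalues gives a map from $X$ into the configuration space of $n$ unordered points in $\C$, and ordered enumerations correspond to lifts to the $n$-fold ordered configuration space, which is an $n!$-sheeted covering. Because $X$ is simply connected, this covering map admits a section over $X$ (pull back to the universal cover and descend), producing the desired continuous functions $d_i$. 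Setting $D=\operatorname{diag}(d_1,\dots,d_n)$ gives a normal multiplicity-free matrix in $M_n(C(X))$ with the same characteristic polynomial as $A$, and the first part of the corollary yields $A\sim D$. The hard step is this lifting/monodromy argument for the eigenvalues; everything else is a direct calculation in obstruction theory and universal coefficients.
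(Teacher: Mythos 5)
Your proof is correct and follows the paper's argument almost exactly for the main claim: both reduce to Theorem \ref{T: U equiv}, note that simple connectivity trivializes $\Pi_{A,B}$ so that $[\theta(A,B)]$ lives in $H^2(X;\Z^n)\cong H^2(X;\Z)^n$, and kill this group via the universal coefficient theorem using $H_1(X)=0$ and $\Hom(H_2(X),\Z)=0$. The one place you diverge is the ``in particular'' step: the paper simply cites Gorin and Lin \cite[Theorem 1.6]{GL} to conclude that the characteristic polynomial splits over a simply connected $X$, whereas you prove this directly by viewing $x\mapsto\{\text{eigenvalues of }A(x)\}$ as a map into the unordered configuration space of $n$ points in $\C$ and lifting through the $n!$-sheeted covering by the ordered configuration space, which is legitimate since a connected, locally path-connected, simply connected space satisfies the lifting criterion vacuously. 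That covering-space argument is essentially the content of the relevant part of the Gorin--Lin result, so your route is a self-contained (and slightly more elementary) substitute for the citation; it buys independence from the reference at the cost of a paragraph, and it is correct as stated.
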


\begin{proof}
Because $X$ is simply connected, we see that $\Pi_{A,B}$ is the trivial $\Z^n$ bundle and so $[\theta(A,B)] \in H^2(X;\Z^n)$. By the universal coefficient theorem
\cite[Theorem 53.1]{MK}, we have $H^2(X;\Z^n)\cong \Hom(H_2(X),\Z^n) \oplus \Ext(H_1(X);\Z^n)$. The supposition that $X$ is simply connected
implies that $H_1(X) = 0$ and thus $\Hom(H_2(X),\Z^n)\cong \displaystyle\oplus_{i=1}^n\Hom(H_2(X),\Z)$. So, given the assumption that 
$\Hom(H_2(X),\Z)=0$, the obstruction class
 $[\theta(A,B)]$ vanishes, and the unitary equivalence follows from Theorem \ref{T: U equiv}. 

To show that any normal multiplicity-free matrix $A$ in $M_n(C(X))$ is diagonalizable, it follows from Goren and Lin \cite[Theorem 1.6]{GL} that the 
simple connectivity of $X$ implies that the characteristic polynomial $\mu$ of $A$ splits as $\prod_{i=1}^n(\lambda- d_i(x))$ for some collection 
$d_1, d_2, \dots, d_n$ of complex-valued continuous functions on $X$.  Let $D \in M_n(C(X))$ be the diagonal matrix with $d_i$ in the $i$th diagonal slot.
By the preceding paragraph, $A$ is unitarily equivalent to $D$. 
\end{proof}
 
\begin{example}\label{E: circle}
Let us re-examine an example from \cite{GP}.
Let $X=S^1$, and let $A$ be the normal matrix
\[
A(z)=\begin{pmatrix}
0&z\\
1&0
\end{pmatrix}.
\]

The characteristic polynomial of $A$ is 
\[
\mu(z, \lambda)=\lambda^2-z,
\]
which is multiplicity free but does not globally split (i.e., it does not factor over $C(X)$). Therefore, by \cite{GP}, $A$ cannot be diagonalized.

What about the unitary equivalence class of $A$? As $S^1$ can be treated as a cell complex with no cells of dimension greater than $1$, 
we see that $H^2(S^1; \Pi_{A,B})=0$ for any normal matrix $B$ with the same characteristic polynomial $\mu$. Therefore $A$ and $B$ are unitarily equivalent
if $B$ is any such matrix.  In other words, there is only one unitary equivalence class of matrices with characteristic polynomial $\mu(z, \lambda)=\lambda^2-z$.
\end{example}

\section{Naturality and the extension to non-CW spaces}\label{S: naturality}

In this section, we show that the obstructions $[\theta(A,B)]$ of Theorem \ref{T: U equiv} are natural with respect to maps in an appropriate sense. We will begin by 
considering cellular maps of CW complexes, but the techniques will allow us to generalize both Theorem \ref{T: U equiv} and our naturality statements to certain
non-CW spaces. For convenience, we will often assume that spaces carrying matrices are pointed (i.e. that they come equipped with basepoints) and that maps
and homotopies preserve the basepoints. In these instances, the spaces $B_n$ and $E_n$ are not assumed to have basepoints, and $\Phi_{A,B}$ is never
a pointed map. First, we recall some background material. 

\subsection{Some more homotopy theory}
Let us briefly recall from \cite[Section VI.2]{Wh} the appropriate categorical framework for maps of cohomology with local coefficients. In \cite{Wh}, 
Whitehead defines a category $\ms L^*$ whose objects are triples $(X,A;\mc G)$ with $(X,A)$ being a space pair (in the category of compactly generated spaces,
 which includes all locally compact Hausdorff spaces \cite[I.4.1]{Wh} and so all CW complexes \cite[II.1.6.1]{Wh}) and $\mc G$ being a system of local 
 coefficients (bundle of groups) over $X$. A morphism $\phi:(X,A;\mc G) \longrightarrow (Y,B;\mc H)$ is then a continuous map of spaces
 $\phi_1:(X,A)\longrightarrow (Y,B)$ along with a bundle homomorphism $\phi_2: \phi_1^*\mc H \longrightarrow \mc G$. Here, if $H$ is the fiber group of $\mc H$
and\footnote{If any of the spaces in our discussion are disconnected, then these statements should be modified either to a collection of statements over different
 connected components or, more direct but also a bit more fancy, a statement in terms of fundamental groupoids. We leave these modifications for the reader. 
 See \cite[Section VI.1]{Wh}.} $\rho_{\mc H}: \pi_1(Y) \longrightarrow  \Aut(H)$ is the monodromy that determines $\mc H$, then $\phi_1^*\mc H$ is the system
 of local coefficients whose fiber group is $H$ and whose monodromy is determined by the composition $\pi_1(X)\xr{\phi_{1*}} \pi_1(Y) \xr{\rho_{\mc H}} \Aut(H)$.
In this setting, we obtain cohomology maps $\phi^*:H^*(Y,B;\mc H) \longrightarrow H^*(X,A;\mc G)$. In our situation, given a map $f:(X,A) \longrightarrow (Y,B)$ 
and a system of local coefficients $\mc H$ over $Y$, we will always take $\mc G = f^*\mc H$, so our $\phi_2$ will always be the identity  
and we simply write $f^*:H^*(Y,B;\mc H) \longrightarrow H^*(X,A;f^*\mc H)$. 

We should also say a few words about homotopies.  For basepoint-preserving homotopies from $X$ to $Y$, it is useful to replace the usual $X\times I$ by  the
``reduced prism'' $X\wedge I_+$, which is  homeomorphic to $X\times I/\{x_0\}\times I$. This space has a natural basepoint --- the image of $\{x_0\}\times I$
in the quotient --- and so serves as a good domain for basepoint-preserving homotopies. See \cite[Section III.2]{Wh}. We will denote the basepoint $[x_0]$. 
If $X$ is a CW complex then so is $X \wedge I_+$ by \cite[Example II.1.5]{Wh}. Whitehead considers the action of homotopic maps on cohomology groups 
in \cite[Section VI.2]{Wh} using the standard prism $X\times I$, but the arguments easily adapt to the reduced prism. Given a system of local coefficients
$\mc G$ on $X$, the prism $X\wedge I_+$ is given the system $p^*\mc G$, where $p:X\wedge I_+ \longrightarrow X$ is the projection. Then one defines a homotopy
between $\phi, \psi: (X,A;\mc G) \longrightarrow (Y,B;\mc H)$ via a map $\eta: (X\wedge I_+, A\wedge I_+;p^*\mc G) \longrightarrow (Y,B;\mc H)$, and we get 
$\phi^*=\psi^*: H^*(Y,B;\mc H) \longrightarrow H^*(X,A;\mc G)$ by \cite[VI.2.6*]{Wh}. In our case, given a homotopy $h:(X\wedge I_+, [x_0]) \longrightarrow (Y,y_0)$ 
between $f:(X,x_0) \longrightarrow (Y,y_0)$ and $g:(X,x_0) \longrightarrow (Y,y_0)$,  rather than work with something of the form $p^*\mc G$, we would prefer to 
work with $h^*\mc H$ on $X\wedge I_{+}$, which restricts to  $f^*\mc H$ and $g^*\mc H$ on $X\times \{0\}$ and $X\times \{1\}$. However, it is not difficult to 
observe that $f^*\mc H\cong g^*\mc H$ and that $h^*\mc H\cong p^*f^*\mc H\cong p^*g^*\mc H$; this frees us to utilize $h^*\mc H$ without violating Whitehead's
framework. For this, it is useful to turn to the viewpoint of bundles of groups as groups with $\pi_1$ actions.  We first observe that the two compositions
$\pi_1(X,x_0)\xr{f_*} \pi_1(Y,y_0) \xr{\rho_{\mc H}} \Aut(H)$ and 
$\pi_1(X,x_0)\xr{g_*} \pi_1(Y,y_0) \xr{\rho_{\mc H}} \Aut(H)$ are identical, because $f$ and $g$ are basepoint preserving homotopic maps. Similarly, 
the compositions 
\begin{align*}
\pi_1(X\wedge I_+,[x_0])\xr{h_*} \pi_1(Y,y_0) \xr{\rho_{\mc H}} \Aut(H)\\
\pi_1(X\wedge I_+,[x_0])\xr{(fp)_*} \pi_1(Y,y_0) \xr{\rho_{\mc H}} \Aut(H)\\
\pi_1(X\wedge I_+,[x_0])\xr{(gp)_*} \pi_1(Y,y_0) \xr{\rho_{\mc H}} \Aut(H)
\end{align*}
are all identical because $fp\sim h\sim gp$. So, in this case, it makes sense to say that 
$f^*=g^*: H^*(Y,B;\mc H) \longrightarrow H^*(X,A;f^*\mc H)= H^*(X,A;g^*\mc H)$. 
The equality is really an abuse of notation; we should replace it with a canonical isomorphism. However, in what follows we will repeat this abuse rather
than overburden the notation. 

\subsection{Back to matrices} We can now return to our study of obstructions to unitary equivalence of matrices. 

\begin{definition}
Suppose $f:Y \longrightarrow X$ is a map of spaces and that $A\in M_n(C(X))$. We define the \emph{pullback of $A$}, denoted $f^*A$, to be the 
matrix in $M_n(C(Y))$ such that $(f^*A)(y)=A(f(y))$. 
\end{definition}

Notice that if $A$ in $M_n(C(X))$ is normal and multiplicity free, then so is $f^*A$, as these are pointwise determined properties. Similarly, if $A$ and $B$
in $M_n(C(X))$ have the same characteristic polynomial, then so do $f^*A$ and $f^*B$, and if $U\in M_n(C(X))$ is unitary, so is $f^*U$. 

\begin{proposition}\label{P: naturality}
Let $f:Y \longrightarrow X$ be a cellular map of CW complexes, and let $A,B\in M_n(C(X))$ be multiplicity-free normal matrices with the same characteristic polynomial. 
Let $[\theta(A,B)]\in H^2(X;\Pi_{A,B})$ be as in Theorem \ref{T: U equiv}. Then $[\theta(f^*A,f^*B)] = f^*[\theta(A,B)]$ in $H^2(Y; f^*\Pi_{A,B})$. 
\end{proposition}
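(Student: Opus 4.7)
The plan is to reduce everything to a naturality property of the classifying map $\Phi_{A,B}$ and then invoke the standard naturality of the obstruction cocycle construction. First, I would check that the classifying maps themselves satisfy $\Phi_{f^*A,\,f^*B} = \Phi_{A,B} \circ f$. This is immediate from the construction in Section \ref{S: equiv}: spectral projections are determined pointwise by the matrix values, and $(f^*A)(y) = A(f(y))$, so the set of spectral projections of $(f^*A)(y)$ is exactly that of $A(f(y))$, and similarly for $B$ and $f^*B$; the bijection between them is the same one. Consequently,
\[
\Phi_{f^*A,\,f^*B}^{*}E_n \;=\; (\Phi_{A,B}\circ f)^{*}E_n \;=\; f^{*}\bigl(\Phi_{A,B}^{*}E_n\bigr),
\]
and passing to the $\pi_1$ of the fibers yields a canonical identification $\Pi_{f^*A,f^*B} \cong f^{*}\Pi_{A,B}$ of bundles of groups over $Y$. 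This identification is exactly what allows the equation in the statement to make sense.

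Second, I would exploit this naturality to produce compatible skeletal lifts. Choose $\widetilde{\Phi}_{A,B}^{0}$ and extend to $\widetilde{\Phi}_{A,B}^{1}$ over $X^1$ as in the proof of Theorem \ref{T: U equiv}. Since $f$ is cellular, $f(Y^k)\subseteq X^k$, so the composition $\widetilde{\Phi}_{A,B}^{1}\circ f|_{Y^{1}} : Y^1 \longrightarrow E_n$ is a legitimate lift of $\Phi_{f^*A, f^*B}$ over the $1$-skeleton of $Y$, and by the choice-independence established in the proof of Theorem \ref{T: U equiv}, this lift may be used to compute $[\theta(f^*A, f^*B)]$.

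Third, I would invoke naturality of the obstruction cocycle at the cochain level. The cocycle $\theta^{2}(\widetilde{\Phi}_{A,B}^{1})$ is the cellular $2$-cochain on $X$ whose value on a $2$-cell $e$ is computed by restricting the lift to $\partial e$ and reading off the resulting homotopy class in the $T^n$ fiber. For a $2$-cell $e'$ of $Y$, the cellular map $f$ sends $\partial e'$ into $X^1$ and $\overline{e'}$ into $X^2$, so the obstruction cocycle of the pulled-back lift, evaluated on $e'$, is simply the value of $\theta^{2}(\widetilde{\Phi}_{A,B}^{1})$ on the cellular image $f_{\#}(e')$, reinterpreted under the identification $\Pi_{f^*A, f^*B}\cong f^*\Pi_{A,B}$. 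Thus at the cocycle level,
\[
\theta^{2}\bigl(\widetilde{\Phi}_{A,B}^{1}\circ f|_{Y^{1}}\bigr) \;=\; f^{\#}\,\theta^{2}\bigl(\widetilde{\Phi}_{A,B}^{1}\bigr),
\]
and passing to cohomology yields $[\theta(f^*A, f^*B)] = f^{*}[\theta(A,B)]$.

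The main obstacle I expect is bookkeeping at the cochain level: one must verify that the canonical isomorphism $\Pi_{f^*A, f^*B}\cong f^{*}\Pi_{A,B}$ really does intertwine the ``lift-the-boundary-and-read-off-a-loop'' recipe for the obstruction cocycle on $X$ with the corresponding recipe on $Y$. This is a routine but pedantic check once the pullback diagram $\Phi_{f^*A,f^*B}=\Phi_{A,B}\circ f$ is in place, and it essentially just unwinds how the local coefficient system on $Y$ is defined as the pullback of the one on $X$. Everything else is a formal consequence of the naturality of obstruction theory as recorded in \cite[Theorem 7.37]{DK} and \cite[Corollary 5.7]{Wh}.
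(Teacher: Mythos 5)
Your proposal is correct and follows essentially the same route as the paper: identify $\Phi_{f^*A,f^*B}=\Phi_{A,B}\circ f$, then use cellularity of $f$ together with the definition of the obstruction cocycle to conclude $\theta^2\bigl(\widetilde{\Phi}^1_{A,B}\circ f|_{Y^1}\bigr)=f^\#\theta^2\bigl(\widetilde{\Phi}^1_{A,B}\bigr)$. The paper simply compresses your third step into a citation of \cite[Theorem V.5.3]{Wh} rather than unwinding the cochain-level bookkeeping explicitly.
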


\begin{proof}
We first notice that $\Phi_{f^*A,f^*B}: Y \longrightarrow E_n$ is equal to the composition $Y\xr{f}X\xr{\Phi_{A,B}} B_n$. If $f$ is cellular, then the obstruction to
lifting the composition is exactly $f^*[\theta(A,B)]$ by  basic properties of obstruction theory that follow directly from the definitions \cite[Theorem V.5.3]{Wh}.
\end{proof}

\begin{example}\label{E: natural example}
Proposition \ref{P: naturality} can yield some results that are \emph{a priori} unexpected if the subject is approached from a purely analytic point of view. For example, 
suppose $(X,Z)$ is any CW pair and that $A,B\in M_n(C(X))$ are multiplicity-free normal matrices. If the restrictions of $A$ and $B$ to $Z$ are not unitarily equivalent,
then certainly $A$ and $B$ cannot be unitarily equivalent over all of $X$. However, the proposition shows that in some cases there will be a surprising converse to this.
In particular, let $i:Z \longrightarrow X$ be the inclusion and suppose that the restriction $i^*:H^2(X;\Pi_{A,B})\longrightarrow H^2(Z; i^*\Pi_{A,B})$ is injective. 
Proposition \ref{P: naturality} implies that $[\theta(i^*A,i^*B)] = i^*[\theta(A,B)]$, so if $[\theta(A,B)]\neq 0$, then $[\theta(i^*A,i^*B)]\neq 0$. 

Here's a concrete example: Consider $S^1\times S^2$, and let $i: S^2\into S^1\times S^2$ take $S^2$ to some $\{x_0\}\times S^2$. Then 
$i^*:H^2(S^1\times S^2;\Z^n)\longrightarrow H^2(S^2; \Z^n)$ is an isomorphism. So if two multiplicity free normal matrices with the same characteristic polynomial
and no monodromy of roots are not unitarily equivalent over $S^1\times S^2$, it follows that their restrictions to $S^2$ cannot be unitarily equivalent. In fact, clearly, 
none of the restrictions to any $\{x\}\times S^2$ can be unitarily equivalent, as any such inclusion can be made cellular. This leads also to the interesting conclusion 
that if $A$ and $B$ are two multiplicity free normal matrices with the same characteristic polynomial over $S^2$, then any extensions of $A$ and $B$ with the same characteristic polynomial over $S^1\times S^2$ must be unitarily equivalent. 
\end{example}

Next, we need a corollary to Proposition \ref{P: naturality} that will serve as a useful lemma later in this section.

\begin{corollary}\label{C: pullback}
Let $(X,x_0)$ be a pointed CW complex, let $(Z,z_0)$ be an arbitrary pointed space, and let $f,g:(X,x_0)\longrightarrow (Z,z_0)$ be homotopic maps.
Suppose $A$ and $B$ in $M_n(C(Z))$ are normal and multiplicity free with a common characteristic polynomial. Then we have $[\theta(f^*A,f^*B)] = [\theta(g^*A,g^*B)]$ in 
$H^2(X;f^*\Pi_{A,B}) = H^2(X;g^* \Pi_{A,B})$. 
\end{corollary}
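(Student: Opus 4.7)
The plan is to reduce this homotopy-invariance statement to the cellular naturality of Proposition~\ref{P: naturality}, applied on the reduced prism over $X$. First I would choose a basepoint-preserving homotopy $h:(X\wedge I_+,[x_0])\longrightarrow (Z,z_0)$ satisfying $h\circ \iota_0=f$ and $h\circ \iota_1=g$, where $\iota_0,\iota_1:(X,x_0)\hookrightarrow (X\wedge I_+,[x_0])$ are the pointed inclusions $x\mapsto [x,0]$ and $x\mapsto [x,1]$ at the two ends of the prism. Since $(X,x_0)$ is a pointed CW complex, so is $X\wedge I_+$ by \cite[Example II.1.5]{Wh}, and the inclusions $\iota_0,\iota_1$ are cellular. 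The pullbacks $h^*A,\,h^*B\in M_n(C(X\wedge I_+))$ remain normal and multiplicity free with the same characteristic polynomial, so Theorem~\ref{T: U equiv} produces an obstruction $[\theta(h^*A,h^*B)]\in H^2(X\wedge I_+;\Pi_{h^*A,h^*B})$.

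Next, I would apply Proposition~\ref{P: naturality} to the cellular maps $\iota_0$ and $\iota_1$. Since $\iota_j^*h^*A=(h\iota_j)^*A$ and analogously for $B$, this gives
\[
[\theta(f^*A,f^*B)]=\iota_0^*[\theta(h^*A,h^*B)]\quad\text{and}\quad [\theta(g^*A,g^*B)]=\iota_1^*[\theta(h^*A,h^*B)].
\]
To finish, I would observe that the identity map $\id:X\wedge I_+\longrightarrow X\wedge I_+$ is a cellular pointed homotopy from $\iota_0$ to $\iota_1$: indeed $\id([x,0])=[x,0]=\iota_0(x)$ and $\id([x,1])=[x,1]=\iota_1(x)$, with $\id([x_0])=[x_0]$. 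By the pointed cellular version of \cite[VI.2.6*]{Wh} recalled at the beginning of this section, the two induced maps on cohomology with local coefficients coincide, $\iota_0^*=\iota_1^*$, after the canonical identification $\iota_0^*\Pi_{h^*A,h^*B}\cong \iota_1^*\Pi_{h^*A,h^*B}$ provided by the homotopy. Applying this equality to $[\theta(h^*A,h^*B)]$ yields $[\theta(f^*A,f^*B)]=[\theta(g^*A,g^*B)]$ in the common cohomology group.

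The main obstacle is not deep but requires careful bookkeeping of local coefficient systems. Specifically, I would need to verify that $\Pi_{h^*A,h^*B}=h^*\Pi_{A,B}$ (which follows from the pointwise identity $\Phi_{h^*A,h^*B}=\Phi_{A,B}\circ h$ together with functoriality of the associated $\pi_1$-bundle construction), so that the restrictions $\iota_j^*\Pi_{h^*A,h^*B}$ really are $f^*\Pi_{A,B}$ and $g^*\Pi_{A,B}$; and also that the canonical identification $f^*\Pi_{A,B}\cong g^*\Pi_{A,B}$ produced by the cellular homotopy $\iota_0\simeq \iota_1$ agrees with the one implicit in the statement of the corollary arising directly from $f\simeq g$. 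Both are routine once one unwinds the formalism of \cite[Section VI.2]{Wh} set up at the top of this section.
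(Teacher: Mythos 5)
Your proposal is correct and follows essentially the same route as the paper: pull back along a basepoint-preserving homotopy $h$ on the reduced prism $X\wedge I_+$, apply Theorem \ref{T: U equiv} to $h^*A,h^*B$, use Proposition \ref{P: naturality} for the cellular end inclusions, and conclude from the fact that the two inclusions are homotopic and hence induce the same map on cohomology with local coefficients. The extra bookkeeping you flag about identifying $\Pi_{h^*A,h^*B}$ with $h^*\Pi_{A,B}$ is exactly what the paper's preliminary discussion in this section is set up to handle.
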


\begin{proof}
Let $h:X \wedge I_+\longrightarrow Z$ be the (basepoint-preserving) homotopy from $f$ to $g$, and, for $s=0,1$, let $i_s:X\longrightarrow X\times \{s\}$ be the inclusions. 
Then $hi_0=f$ and $hi_1=g$, and $i_0,i_1$ are cellular maps. 

By Theorem \ref{T: U equiv}, the class $[\theta(h^*A,h^*B)]$ in $H^2(X\times I;h^*\Pi_{A,B})$ is  a well-defined  obstruction  to  $h^*A$ and $h^*B$ being unitarily equivalent. 
  By Proposition \ref{P: naturality} and the definitions,  
\[
[\theta(f^*A,f^*B)]=[\theta(i_0^*h^*A,i_0^*h^*B)]=i_0^*[\theta(h^*A,h^*B)]\in H^2(Y; i_{0}^*h^*\Pi_{A,B})
\]
and
\[
[\theta(g^*A,g^*B)]=[\theta(i_1^*h^*A,i_1^*h^*B)]=i_1^*[\theta(h^*A,h^*B)]\in H^2(Y; i_{1}^*h^*\Pi_{A,B}).
\]
But $i_0$ and $i_1$ are obviously (basepoint-preserving) homotopic maps, so $i_0^*[\theta(h^*A,h^*B)]=i_1^*[\theta(h^*A,h^*B)]$. The corollary follows. 
\end{proof}

Using the preceding results, we can now define an obstruction to the unitary equivalence of two normal multiplicity-free matrices on any space $Z$ that is homotopy
equivalent to a CW complex: Suppose $(Z,z_0)$ is a pointed locally compact Hausdorff space, and suppose $(X,x_0)$ is a CW pair that is (basepoint-preserving)
homotopy equivalent to $(Z,z_0)$. Let $f:(Z,z_0)\longrightarrow (X,x_0)$ and $g:(X,x_0)\longrightarrow (Z,z_0)$ be homotopy inverses to one another. 
Suppose that $A$ and $B$ in $M_n(C(Z))$ are normal and multiplicity free. Then we have the obstruction $[\theta(g^*A,g^*B)]$ in $H^2(X;g^*\Pi_{A,B})$, where
 $\rho$ is the map $\pi_1(Z,z_0)\longrightarrow \Aut(\Z^n)$ obtained by composing the induced map $(\Phi_{A,B})_*: \pi_1(Z, z_0) \longrightarrow \pi_1(B)$ 
  and the representation $\pi_1(B^n)\longrightarrow \Aut(\Z^n)$ determined by the bundle $E_n\longrightarrow B_n$.
 
 \begin{definition}
 \emph{Define} $[\theta(A,B)] \in H^2(Z;f^*g^*\Pi_{A,B})=H^2(Z;\Pi_{A,B})$ to be $[\theta(A,B)]=f^*[\theta(g^*A,g^*B)]$. 
 \end{definition}

\begin{remark}\label{R: CW agrees}
Note that if $Z$ is itself a CW complex, then this definition agrees with our previous usage by taking both $f$ and $g$ to be the identity map $Z\longrightarrow Z$.
\end{remark}

\begin{proposition}\label{P: nonCW ob}
Suppose $(Z,z_0)$ is a locally compact Hausdorff space that is (basepoint-preserving) homotopy equivalent to a CW pair $(X,x_0)$. Let $A,B\in M_n(C(Z))$
be normal and multiplicity free.  The class $[\theta(A,B)]$ is independent of the choice of homotopy equivalence used to define it, and it vanishes if and only if 
$A$ and $B$ are unitarily equivalent.
\end{proposition}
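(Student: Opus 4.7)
The plan is to prove two things: (i) the class $[\theta(A,B)]$ does not depend on the choice of CW model $(X,x_0)$ and the homotopy inverses $f$ and $g$, and (ii) $[\theta(A,B)]$ vanishes if and only if $A$ and $B$ are unitarily equivalent over $Z$. Both rely on the naturality properties already established in Proposition \ref{P: naturality} and Corollary \ref{C: pullback}; for (ii), the crucial additional input is the fact, recorded at the end of Section 2, that $p:E_n\to B_n$ is a Hurewicz fibration, which lets us transport lifts across arbitrary (not necessarily CW) bases using homotopy lifting.

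For the independence part, suppose $(X_1,x_{0,1})$ with $f_1:Z\to X_1$, $g_1:X_1\to Z$ and $(X_2,x_{0,2})$ with $f_2:Z\to X_2$, $g_2:X_2\to Z$ are two CW models together with pointed homotopy inverses. I would first form the composition $h=f_2g_1:X_1\to X_2$, which is a pointed homotopy equivalence between pointed CW complexes, and use cellular approximation to replace $h$ by a cellular map $h'\simeq h$. Proposition \ref{P: naturality} applied to $h'$ with the matrices $g_2^*A$ and $g_2^*B$ on $X_2$ gives $[\theta(h'^*g_2^*A,h'^*g_2^*B)]=h'^*[\theta(g_2^*A,g_2^*B)]$. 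Next, since $g_2h'\simeq g_2f_2g_1\simeq g_1$ as pointed maps from the CW complex $X_1$ to $Z$, Corollary \ref{C: pullback} identifies the left-hand side with $[\theta(g_1^*A,g_1^*B)]$. Finally, applying $f_1^*$ and using $h'f_1\simeq f_2g_1f_1\simeq f_2$, together with the homotopy invariance of singular cohomology with local coefficients, yields
\[
f_1^*[\theta(g_1^*A,g_1^*B)] = (h'f_1)^*[\theta(g_2^*A,g_2^*B)] = f_2^*[\theta(g_2^*A,g_2^*B)]
\]
in $H^2(Z;\Pi_{A,B})$, which is the desired equality.

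For part (ii), the forward direction is immediate: if $U\in U_n(C(Z))$ satisfies $UAU^*=B$, then $g^*U\in U_n(C(X))$ realizes the unitary equivalence of $g^*A$ and $g^*B$ over the CW complex $X$, so Theorem \ref{T: U equiv} gives $[\theta(g^*A,g^*B)]=0$, and hence $[\theta(A,B)]=f^*0=0$. For the converse, because $f:Z\to X$ is a homotopy equivalence, $f^*:H^2(X;g^*\Pi_{A,B})\to H^2(Z;\Pi_{A,B})$ is an isomorphism, so the vanishing of $[\theta(A,B)]$ forces $[\theta(g^*A,g^*B)]=0$. Applying Theorem \ref{T: U equiv} and the ``if'' direction of Proposition \ref{P: U equiv}, I obtain a lift of $\Phi_{g^*A,g^*B}=\Phi_{A,B}\circ g:X\to B_n$ to $E_n$. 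Composing this lift with $f:Z\to X$ gives a lift of $\Phi_{A,B}\circ g\circ f:Z\to B_n$, and because $gf\simeq\id_Z$ we have a homotopy $\Phi_{A,B}\circ g\circ f\simeq\Phi_{A,B}$ in $B_n$. The homotopy lifting property of the Hurewicz fibration $p:E_n\to B_n$ then promotes the lift of $\Phi_{A,B}\circ gf$ to a lift of $\Phi_{A,B}$ itself, and Proposition \ref{P: U equiv} concludes that $A$ and $B$ are unitarily equivalent.

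The step I expect to require the most care is the independence argument, specifically the bookkeeping of local coefficient systems: the pullbacks $(g_2h')^*\Pi_{A,B}$ and $g_1^*\Pi_{A,B}$ (and similarly $(h'f_1)^*\Pi_{A,B}$ and $f_2^*\Pi_{A,B}$) agree only up to canonical isomorphism, and these identifications need to be threaded compatibly through the chain of equalities. The notational convention articulated just before the definition of $[\theta(A,B)]$, namely tacitly identifying pullback systems along homotopic maps, is precisely what legitimizes the argument; everything else is formal once this convention is accepted.
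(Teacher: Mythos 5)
Your proposal is correct and follows essentially the same route as the paper: the independence argument via a cellular approximation $h'$ of $f_2g_1$ combined with Corollary \ref{C: pullback} (using $g_2h'\simeq g_1$) and Proposition \ref{P: naturality}, and the converse of the vanishing statement via the isomorphism $f^*$, Theorem \ref{T: U equiv}, and the homotopy lifting property of the fibration $p:E_n\to B_n$ applied to $\Phi_{A,B}\circ g\circ f\simeq \Phi_{A,B}$. Your closing remark about tracking the canonical identifications of pulled-back coefficient systems matches the convention the paper adopts explicitly.
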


\begin{proof}
Suppose that $(\widehat X,\hat x_0)$ is another CW pair that is (basepoint-preserving) homotopy equivalent to $(Z,z_0)$ by homotopy inverses  
$\hat f:(Z,z_0)\longrightarrow (\widehat X,\hat x_0)$ and $\hat g:(\widehat X,\hat x_0) \longrightarrow (Z,z_0)$. Let $k$ be a cellular approximation
to $\hat f g$ by a basepoint-preserving homotopy; see \cite[Theorem 4.8]{Ha}). Then $\hat gk\sim \hat g\hat fg\sim g$ in the following diagram:

\begin{diagram}
 (Z,z_0)   &\pile{\rTo^f \\\lTo_g } &  (X,x_0)\\
	\uTo^{\hat g}\dTo_{\hat f}&\ldTo_k\\
	(\widehat X,\hat x_0).
\end{diagram}

Now, we can perform the following computation:

\begin{align*}
f^*[\theta(g^*A,g^*B)]&=f^*[\theta(k^*\hat g^*A,k^*\hat g^*B)]&\text{by Corollary \ref{C: pullback}}\\
&=f^*k^*[\theta(\hat g^*A,\hat g^*B)]&\text{by Proposition \ref{P: naturality}}\\
&=f^*g^*\hat f^*[\theta(\hat g^*A,\hat g^*B)]&\text{pullbacks by homotopic maps}\\
&=\hat f^*[\theta(\hat g^*A,\hat g^*B)]&\text{pullbacks by homotopic maps.}
\end{align*}

This shows that our definition of $[\theta(A,B)]$ on $Z$ is independent of choices. 

For the second claim, first suppose that $A$ and $B$ are unitarily equivalent. Then $B=UAU^*$, and $g^*B=(g^*U)(g^*A)(g^*U^*)=(g^*U)(g^*A)(g^*U)^*$. 
So $g^*B$ is unitarily equivalent to $g^*A$ and $[\theta(A,B)] = f^*[\theta(g^*A,g^*\hat g^*B)] = f^*(0) = 0$. 

Next, suppose that $[\theta(A,B)] = f^*[\theta(g^*A,g^*B)] = 0$. Then we have that $g^*[\theta(A,B)] = g^*f^*[\theta(g^*A,g^*B)] = 0$. But $fg$ is homotopic to the identity, 
so $[\theta(g^*A,g^*B)] = 0$, which implies by Theorem \ref{T: U equiv} that $g^*A$ and $g^*B$ are unitarily equivalent. Pulling back by $f$ a unitary matrix that
realizes the unitary equivalence of $g^*A$ and $g^*B$, as in the argument of the preceding paragraph, shows that $f^*g^*A$ and $f^*g^*B$ are unitarily equivalent. 
By Proposition \ref{P: U equiv}, this means that $\Phi_{f^*g^*A,f^*g^*B}:Z\longrightarrow B_n$ lifts to $E_n$. Unraveling the definitions, we see that
$\Phi_{f^*g^*A,f^*g^*B} = g\circ f\circ \Phi_{A,B}$, which is homotopic to $\Phi_{A,B}$. As $g\circ f\circ \Phi_{A,B}$ has a lift to $E_n$, so does $\Phi_{A,B}$,
by the homotopy lifting extension property of fibrations. Therefore, again by Proposition \ref{P: U equiv}, the matrices $A$ and $B$ are unitarily equivalent. 
\end{proof}

Lastly, now that we have defined an obstruction for non-CW spaces, we can show that it is also natural. 

\begin{proposition}\label{P: nonCW naturality}
Let $h:(Z,z_0)\longrightarrow (\widehat Z,\hat z_0)$ be a map of locally-compact Hausdorff spaces that are (basepoint-preserving) homotopy equivalent to CW complexes. 
Let $A$ and $B$ in $M_n(C(\widehat Z))$ be normal and multiplicity free. Then $[\theta(h^*A,h^*B)] = h^*[\theta(A,B)]$. 
\end{proposition}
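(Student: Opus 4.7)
The plan is to reduce the statement to the already-established cellular naturality (Proposition \ref{P: naturality}) by choosing pointed CW models for $Z$ and $\widehat Z$ and then replacing $h$ by a cellular approximation between those models. Since Proposition \ref{P: nonCW ob} guarantees that $[\theta(\,\cdot\,,\,\cdot\,)]$ on a non-CW space is independent of the chosen CW model, I am free to set up the data as I wish. Specifically, I would pick pointed homotopy equivalences $f\colon (Z,z_0)\to(X,x_0)$, $g\colon(X,x_0)\to(Z,z_0)$, $\hat f\colon(\widehat Z,\hat z_0)\to(\widehat X,\hat x_0)$, and $\hat g\colon(\widehat X,\hat x_0)\to(\widehat Z,\hat z_0)$ with $X$ and $\widehat X$ CW complexes, and then use the cellular approximation theorem (in the pointed category) to choose a cellular map $k\colon(X,x_0)\to(\widehat X,\hat x_0)$ that is pointed homotopic to the composition $\hat f\circ h\circ g$.

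With this setup in place, the argument is a diagram chase. Starting from the definition,
\[
h^*[\theta(A,B)] = h^*\hat f^*[\theta(\hat g^*A,\hat g^*B)] = (\hat f h)^*[\theta(\hat g^*A,\hat g^*B)],
\]
I would use the pointed homotopy $\hat f h\sim\hat f h g f\sim k f$ (from $g f\sim\id_Z$ together with the choice of $k$) and homotopy invariance of cohomology with local coefficients to conclude $(\hat f h)^*=(kf)^*=f^*k^*$. Because $k$ is cellular and $\hat g^*A,\hat g^*B$ are normal, multiplicity-free matrices on the CW complex $\widehat X$ with common characteristic polynomial, Proposition \ref{P: naturality} converts $k^*[\theta(\hat g^*A,\hat g^*B)]$ into $[\theta(k^*\hat g^*A,k^*\hat g^*B)]$. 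Next, $\hat g k\sim\hat g\hat f h g\sim h g$ as pointed maps, so Corollary \ref{C: pullback} identifies $[\theta(k^*\hat g^*A,k^*\hat g^*B)]$ with $[\theta(g^*h^*A,g^*h^*B)]$. Assembling these gives
\[
h^*[\theta(A,B)] = f^*[\theta(g^*h^*A,g^*h^*B)],
\]
and the right-hand side is $[\theta(h^*A,h^*B)]$ by the definition of the obstruction on the non-CW space $Z$.

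The main obstacle is bookkeeping rather than any substantive difficulty. I need to confirm that cellular approximation can be performed in the pointed category, that each intermediate homotopy used above can be arranged to preserve basepoints, and, most delicately, to keep straight the identifications of the several systems of local coefficients along the homotopic pullbacks (the sort of ``abuse of notation'' previewed at the end of the homotopy theory subsection). Once those canonical identifications of the coefficient systems are in place, every equality in the chain above is literally meaningful, and the claim falls out of Proposition \ref{P: naturality} and Corollary \ref{C: pullback}.
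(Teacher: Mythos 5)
Your argument is correct and is essentially identical to the paper's proof: the same choice of CW models, the same cellular approximation $k\sim \hat f h g$, and the same chain of equalities via Proposition \ref{P: naturality}, Corollary \ref{C: pullback}, and homotopy invariance, merely written starting from $h^*[\theta(A,B)]$ instead of from $[\theta(h^*A,h^*B)]$. The bookkeeping caveats you flag (pointed cellular approximation and the canonical identifications of coefficient systems) are exactly the ones the paper also relies on.
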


\begin{proof}
Suppose we have maps $f:(Z,z_0)\longrightarrow (X,x_0)$ and $\hat f:(\widehat Z,\hat z_0) \longrightarrow (\widehat X,\hat x_0)$ 
that are (basepoint-preserving) homotopy 
equivalences to CW pairs with inverses $g:(X,x_0) \longrightarrow (Z,z_0)$ and $\hat g:(\widehat X,\hat x_0) \longrightarrow (\widehat Z,\hat z_0)$.
Consider the following diagram, in which $k$ is a cellular approximation to $\hat fhg$. We have 
$\hat gk \sim \hat g\hat fhg \sim  hg$ and $kf\sim \hat fhgf\sim \hat f h$. 

\begin{diagram}
(Z,z_0)& \pile{\rTo^f \\\lTo_g } &  (X,x_0)\\
\dTo^h &&   \dTo^{k} \\
(\widehat Z,\hat z_0)& \pile{\rTo^{\hat f} \\\lTo_{\hat g} } &  (\widehat X,\hat x_0).\\
\end{diagram}

Now we compute

\begin{align*}
[\theta(h^*A,h^*B)]&=f^*[\theta(g^*h^*A,g^*h^*B)]&\text{definition}\\
&=f^*[\theta(k^*\hat g^* A,k^* \hat g^*B)]&\text{by Corollary \ref{C: pullback}}\\
&=f^*k^*[\theta(\hat g^* A, \hat g^*B)]&\text{by Proposition \ref{P: naturality}}\\
&=h^*\hat f^*[\theta(\hat g^* A, \hat g^*B)]&\text{pullback by homotopic maps}\\
&=h^*[\theta(A,B)]&\text{definition}.
\end{align*}
\end{proof}

\begin{remark}
In particular, if $(Z,z_0)$ and $(\widehat Z,\hat z_0)$ in the statement of Proposition \ref{P: nonCW naturality} are CW pairs but $h$ is not necessarily a cellular map, 
then Proposition \ref{P: nonCW naturality} extends Proposition \ref{P: naturality} to this setting; see also Remark \ref{R: CW agrees}. 
\end{remark}

\section{Monodromy}\label{S: monodromy}

So far, our invariants $[\theta(A,B)]$ have lived in the groups $H^2(X;\Pi_{A,B})$, where $\Pi_{A,B}$ is a bundle of groups over $X$ having fiber $\Z^n$. 
In this section, we will show that, up to isomorphism, our $\Z^n$ bundles depend only on the common characteristic polynomial of $A$ and $B$ and not on
the matrices themselves. For this, it will be convenient in this section to return to thinking of a bundle of groups as a group over the basepoint $x_0$ of $X$
together with a  $\pi_1(X,x_0)$ action. In our case, this corresponds to a representation $\rho:\pi_1(X,x_0)\longrightarrow \Z^n$. 

Let $x_0 \in X$ be a fixed basepoint,  let $A \in M_n(C(X))$ be normal and multiplicity-free, and let  
$\Lambda=\{\lambda_1,\ldots,\lambda_n\}$ be the eigenvalues of $A(x_0)$, listed in some arbitrary order. 
If $\gamma$ is a loop in $X$ based at $x_0$, then $\gamma$ induces a permutation of $\Lambda$ that depends only on the class of 
$\gamma$ in $\pi_1(X) = \pi_1(X,x_0)$. Details can be found in \cite{GL}. The basic idea is that if we choose an eigenvalue 
$\lambda$ of $A(x_0)$ and then follow the continuously varying eigenvalue as we move around the loop $\gamma$, then,
when we return to $x_0$, we may arrive back at a different eigenvalue. Altogether, this yields a monodromy assignment 
from  the homotopy class $[\gamma]$ to  $S_\Lambda$, the permutation group on $\Lambda$. In fact, following all the eigenvalues 
as we move around the loop leads to a one-parameter family of configurations of $n$ distinct points in $\C$, and so one obtains a representation 
$\pi_1(X)\longrightarrow \mc B_n$, where $\mc B_n$ is the braid group on $n$ strands. Our monodromy action on $\Lambda$ then corresponds to the
 map $\mc B_n\longrightarrow S_\Lambda$ determined by how the braid permutes the endpoints. 
Similarly, as we move along $\gamma$ we also obtain a 1-parameter family of collections of $n$ linearly independent 
eigenspaces which will be mutually orthogonal if $A$ is normal. Corresponding to the monodromy permutation of eigenvalues is the corresponding 
permutation of eigenspaces (interpreted as a bijection of sets whose elements are subspaces of $\C^n$, not in terms of specific linear maps).
 Similarly, we have permutations of spectral projections.

\begin{proposition}\label{clean}
Let $\mu$ be the common characteristic polynomial of normal multiplicity-free matrices $A$ and $B$ in $M_n(C(X))$, let 
$\mf m_\mu: \pi_1(X,x_0) \longrightarrow S_\Lambda$ be the representation determined by the monodromy
of the zeros of $\mu$ around loops, and, for $\alpha \in S_\Lambda$, let $\Sigma_{\alpha}$ denote the corresponding permutation matrix.  
Then the representation $\rho:\pi_1(X) \longrightarrow \Aut (\Z^n)$ corresponding to the bundle of groups $\Pi_{A,B}$ takes $[\gamma]$ to 
$\Sigma_{\mf m_\mu([\gamma])}$. 
In particular, $\rho$ depends only on the polynomial $\mu$. 
\end{proposition}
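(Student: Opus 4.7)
The plan is to explicitly track the monodromy of the $T^n$-fiber structure of $\Phi_{A,B}^*E_n$ along a loop $\gamma:(I,\partial I)\to (X,x_0)$ and identify it with the permutation $\mf m_\mu([\gamma])$ acting on a canonical basis of $\pi_1(F_{x_0})\cong\Z^n$.

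First I would use Proposition \ref{fiber structure} to pin down a concrete identification of the fiber $F_{x_0}$ of $\Phi_{A,B}^*E_n$ with $T^n$. Fix any basepoint $((\mc P_0,\mc Q_0,\sigma_0),U_0)\in F_{x_0}$, where $\mc P_0$ and $\mc Q_0$ are the spectral projections of $A(x_0)$ and $B(x_0)$ respectively. Proposition \ref{fiber structure} then identifies $F_{x_0}$ with $\prod_{P\in\mc P_0}S^1$ by sending $(z_P)_{P\in\mc P_0}$ to $\sum_{P\in\mc P_0}z_P\sigma_0(P)U_0P$. Consequently $\pi_1(F_{x_0})\cong\Z^n$ inherits a canonical $\Z$-basis $\{\ell_P\}_{P\in\mc P_0}$, where $\ell_P$ is the loop that winds once in the $P$-th coordinate and is constant in the others.

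Next I would analyze what happens as we traverse $\gamma$. Because $A$ and $B$ are normal and multiplicity free, their spectral projections depend continuously on the point of $X$, so as $t$ ranges over $I$ the sets $\mc P(\gamma(t))$ and $\mc Q(\gamma(t))$ trace out continuous one-parameter families of unordered $n$-tuples of rank-one projections. When we return to $x_0$ these sets coincide with $\mc P_0$ and $\mc Q_0$, but the continuous tracking of an individual element $P\in\mc P_0$ along $\gamma$ produces a permutation $\tau_\gamma$ of $\mc P_0$. Under the bijection $P\mapsto\lambda_P$ between spectral projections and eigenvalues, $\tau_\gamma$ is exactly $\mf m_\mu([\gamma])$, because eigenvalues and their spectral projections deform continuously through one another along the path. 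The homotopy lifting property applied to the fibration $\Phi_{A,B}^*E_n\to X$ then shows that the monodromy action of $[\gamma]$ sends $\ell_P$ to $\ell_{\tau_\gamma(P)}$: a homotopy lift of $\gamma$ starting with $\ell_P$ at $t=0$ ends with a loop that, in the canonical identification of the fiber over $\gamma(1)=x_0$ with $T^n$, winds once in the coordinate indexed by $\tau_\gamma(P)$. Re-expressing this on $\Z^n$ with basis $\{\ell_P\}$ yields the permutation matrix $\Sigma_{\mf m_\mu([\gamma])}$, and the final assertion is immediate since $\mf m_\mu$ is defined purely from $\mu$.

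The main obstacle will be verifying that the monodromy permutation of spectral projections really coincides with the monodromy permutation of the zeros of $\mu$. This rests on the fact that, for a multiplicity-free normal matrix, the assignment from a simple eigenvalue to its spectral projection is continuous (indeed locally given by a holomorphic resolvent integral around a small contour enclosing that eigenvalue), so continuous tracking of a simple eigenvalue along $\gamma$ forces continuous tracking of its spectral projection. A secondary technical point is to confirm that the identifications of nearby fibers with $T^n$ supplied by Proposition \ref{fiber structure} are compatible enough under continuous deformation that the monodromy acts on $\Z^n$ purely by relabeling the basis; this follows from the naturality of the formula $(z_P)\mapsto\sum z_P\sigma(P)UP$ with respect to the choice of labeling of $\mc P_0$, which is why one obtains a permutation matrix rather than a more complicated automorphism of $\Z^n$.
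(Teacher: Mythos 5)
Your proposal is correct and follows essentially the same route as the paper: identify the fiber with $T^n$ via Proposition \ref{fiber structure}, take the coordinate loops $\ell_P$ as generators of $\pi_1(F_{x_0})$, track the spectral projections continuously along $\gamma$ (so that they are permuted exactly as the eigenvalues are), and use a lift of $S^1\times I$ over $\Phi_{A,B}\circ\gamma$ to see that the monodromy merely relabels the coordinates. The paper's only additional content is that it writes the lift down explicitly as $H(z,t)=z\sigma_t(P_i(t))U(t)P_i(t)+\sum_{j\neq i}\sigma_t(P_j(t))U(t)P_j(t)$ using a lifted path $U(t)$ of unitaries, which makes your "canonical identification of nearby fibers" step concrete.
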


\begin{proof}
Choose a basepoint $x_0$ in $X$, and let $\gamma$ be a loop in $X$ based at $x_0$. By definition, the representation $\rho([\gamma])$ is determined 
by the action of the loop $\Phi_{A,B}\circ \gamma$ on $\pi_1(F_0)$, where $\pi_1(F_0)$ is the fundamental group of the fiber $F_0$ of $E_n$ over
$\Phi_{A,B}(x_0)$.  From Proposition \ref{fiber structure}, we know that $F_0$ can be viewed as $\prod_{P\in \mathcal{P}(0)} S^1$, where
 $\mathcal{P}(0)$ is the collection of spectral projections of $A(x_0)$, and hence $\pi_1(F_0) \cong \prod_{P\in \mathcal{P}(0)}\pi_1(S^1) \cong \Z^n$. 

More precisely, let\footnote{It would be more consistent to write $\sigma(0)$, but this choice will make the notation a bit easier below.} 
$\bigl((\mathcal{P}(0), \mathcal{Q}(0), \sigma_0), U(0)\bigr)$ be an arbitrary point in the fiber $F_0$, and let $P_1(0), P_2(0),\ldots, P_n(0)$ be the 
elements of $\mc P(0)$ written in the order determined by the ordering of the eigenvalues in $\Lambda$.  By Proposition \ref{fiber structure}, every
element of $F_0$ has a unique form 
\[
\widetilde{U} = \sum_{j=1}^n z_{j}\sigma_0(P_j(0))U(0)P_j(0),
\]
as each parameter  ${z}_j$ runs over $S^1$. Collectively, this gives the homeomorphism $T^n\cong F_0$. Consequently, via this identification, we can
describe the $i$th generator $[\ell_{i}]\in \pi_1(F_0)$ by the loop
\[
\ell_i(z)= z\sigma_0(P_i(0))U(0)P_i(0) + \sum_{j\neq i}\sigma_0(P_j(0))U(0)P_j(0)
\]
for $z\in S^1$ with its standard orientation. 

Now, as recalled in our review of cohomology with local coefficients in Section \ref{S: equiv}, the action of $\pi_1(X)$ on $[\ell_i]$ will be represented by any
loop ``at the other end'' of a lift of $S^1\times I$ over $\Phi_{A,B}\circ \gamma$ that extends $\ell_i$. We will construct such a lift explicitly. First, we
parameterize the loop $\Phi_{A,B}\circ\gamma$ by $t\in I$.  Note that the spectral projections of $A(\gamma(t))$ vary continuously with $t$ and are distinct
at every point, so, given our choice of ordering $\mc P(0)=\{P_j(0)\}$, the path $\gamma$ determines paths of spectral projections $\{P_j(t)\}$ that agree with
our  $\{P_j(0)\}$ at $t=0$ (explaining our earlier choice of notation). Because $\gamma$ is a loop,  we have that $\mc P(1)=\mc P(0)$, but in general $P_j(1)$ 
is not necessarily equal to $P_j(0)$.  In fact, if $\lambda_j$ is the eigenvalue of $A(\gamma(0))=A(x_0)$ corresponding to the projection  $P_j(0)$, then 
$P_j(1)$ is precisely the projection corresponding to eigenvalue $\mf m_{\mu}(\gamma)(\lambda_j)$; moving along $\gamma$ permutes the spectral 
projections exactly as it permutes the corresponding eigenvalues. 

Next,  let $\eta$ be a lift of $\Phi_{A,B}\circ \gamma$ to $E_n$ such that $\eta(0)=\bigl((\mathcal{P}_0, \mathcal{Q}_0, \sigma_0), U(0)\bigr)$. We can
write $\eta(t)= \bigl((\mathcal{P}_0(t), \mathcal{Q}_0(t), \sigma_t), U(t) \bigr)$, with each $P_j(t)\in \mc P(t)$. Now parameterize $S^1\times I$ by
coordinates $(z,t)$, and define
\[
H(z,t)=z\sigma_t(P_i(t))U(t)P_i(t) + \sum_{j\neq i}\sigma_t(P_j(t))U(t)P_j(t).
\]
Proposition \ref{fiber structure} guarantees that this is a lift of $\Phi_{A,B}\circ\gamma$, and we have clear agreement with $\ell_i$ at $t=0$. At $t=1$,
we have the loop 
\[
z\mapsto z\sigma_1(P_i(1))U(t)P_i(1) + \sum_{j\neq i}\sigma_1(P_j(1))U(t)P_j(1),
\]
which is evidently the generator of $\pi_1(F_0)$ corresponding to the spectral projection associated to the eigenvalue $\mf m_{\mu}(\gamma)(\lambda_i)$.

Therefore, we see that the action of $\gamma$ on the generators of $\pi_1(F)\cong \Z^n$ is precisely as claimed.
\end{proof}

\begin{corollary} Suppose the only homomorphism from $\pi_1(X)$ to $\mc B_n$ is the trivial one.
Then $\theta(A,B)$ is in $H^2(X;\Z^n)$.
\end{corollary}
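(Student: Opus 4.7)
The plan is to deduce this corollary as an essentially immediate consequence of Proposition~\ref{clean} together with the preceding discussion. First I would recall that, as explained in the paragraph introducing the monodromy, the permutation representation $\mf m_\mu: \pi_1(X)\to S_\Lambda$ factors through the braid group as $\pi_1(X) \to \mc B_n \to S_\Lambda$, where the first map sends a loop to the braid traced out by the continuously varying eigenvalues of $\mu$ and the second map records only the induced permutation of endpoints.

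By Proposition~\ref{clean}, the representation $\rho: \pi_1(X) \to \Aut(\Z^n)$ determining $\Pi_{A,B}$ as a bundle of groups is the composition of $\mf m_\mu$ with $\alpha \mapsto \Sigma_\alpha$, so $\rho$ itself factors through $\mc B_n$. Under the hypothesis that every homomorphism $\pi_1(X) \to \mc B_n$ is trivial, this factorization forces $\rho$ to be trivial, and hence $\Pi_{A,B}$ is (isomorphic to) the trivial bundle with fiber $\Z^n$ over $X$. Cohomology with local coefficients in a trivial bundle reduces to ordinary singular cohomology with constant coefficients, so $[\theta(A,B)] \in H^2(X; \Pi_{A,B})$ lies naturally in $H^2(X; \Z^n)$, as claimed.

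There is no real obstacle beyond cleanly reconciling the two viewpoints of $\Pi_{A,B}$ — as a bundle of groups and as a $\pi_1(X)$-module — used to pass from triviality of $\rho$ to triviality of the coefficient system. In the case where $X$ is only assumed homotopy equivalent to a CW complex and $[\theta(A,B)]$ is defined via pullback as in Section~\ref{S: naturality}, I would simply add the remark that the pullback of a trivial local coefficient system is again trivial, so the same conclusion carries over verbatim.
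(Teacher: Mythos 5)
Your argument is correct, but it takes a genuinely different (and shorter) route than the paper's. The paper invokes Gorin--Lin \cite[Theorem 1.4]{GL}: under the hypothesis that every homomorphism $\pi_1(X)\to\mc B_n$ is trivial, every monic polynomial over $C(X)$ of degree $n$ splits globally as $\prod(\lambda-d_i(x))$, and a globally split polynomial has trivial monodromy of roots, so $\rho$ is trivial. You instead bypass the splitting theorem entirely: you observe, as the paper itself records in the paragraph preceding Proposition~\ref{clean}, that $\mf m_\mu$ factors as $\pi_1(X)\to\mc B_n\to S_\Lambda$, so the hypothesis forces $\mf m_\mu$ (and hence $\rho = \Sigma\circ\mf m_\mu$) to be trivial without ever mentioning polynomial splitting. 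Your route is more elementary and uses strictly less input; the paper's route has the incidental side benefit of exhibiting the global factorization of $\mu$, which is a stronger fact but not logically needed for this corollary. Your remarks about reconciling the bundle-of-groups and $\pi_1$-module viewpoints, and about the extension to non-CW spaces via pullback, are fine and consistent with what the paper does elsewhere.
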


\begin{proof}
By \cite[Theorem 1.4]{GL}, if the only homomorphism $\pi_1(X)\longrightarrow \mc B_n$ is trivial, then any polynomial with coefficients in $C(X)$ and leading coefficient $1$ splits as $\prod_{i=1}^n(\lambda- d_i(x))$; 
in particular, by Proposition \ref{clean}, the monodromy of roots is trivial. Thus $\rho$ is trivial, and the claim follows.
\end{proof}

\begin{corollary} Suppose the only homomorphism from $\pi_1(X)$ to $\mc B_n$ is the trivial one, and
also suppose that $H^2(X;\Z) = 0$. Then any two multiplicity-free normal 
matrices $A$ and $B$ in $M_n(C(X))$ with the same characteristic polynomial are unitarily equivalent.
\end{corollary}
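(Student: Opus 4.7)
The plan is to chain together the preceding corollary with Theorem \ref{T: U equiv} (or its non-CW version, Proposition \ref{P: nonCW ob}), reducing the problem to showing that a certain cohomology group vanishes.

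First, I would invoke the previous corollary: since the only homomorphism $\pi_1(X) \longrightarrow \mc B_n$ is trivial, the monodromy representation $\rho:\pi_1(X) \longrightarrow \Aut(\Z^n)$ determined by Proposition \ref{clean} is trivial. Hence the local system $\Pi_{A,B}$ is the constant system with fiber $\Z^n$, and the obstruction class sits in the ordinary cohomology group
\[
[\theta(A,B)] \in H^2(X;\Z^n).
\]

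Next, I would identify $H^2(X;\Z^n)$ with $\bigoplus_{i=1}^n H^2(X;\Z)$, which is a standard consequence of the fact that cohomology commutes with finite direct sums of coefficient groups. Given the hypothesis $H^2(X;\Z)=0$, this direct sum vanishes, so the obstruction $[\theta(A,B)]$ is forced to be zero. Applying Theorem \ref{T: U equiv} (or Proposition \ref{P: nonCW ob} if $X$ is only homotopy equivalent to a CW complex), the vanishing of $[\theta(A,B)]$ is equivalent to $A$ and $B$ being unitarily equivalent, which gives the conclusion.

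There is no substantial obstacle here; the corollary is essentially a direct specialization of the two ingredients already at hand. The only point worth a sentence of care is to note that $A$ and $B$ are given to have the same characteristic polynomial, which is exactly the hypothesis needed to form $\Phi_{A,B}$ and hence the obstruction $[\theta(A,B)]$, and that the triviality of the monodromy of roots from \cite[Theorem 1.4]{GL} (used in the previous corollary) is what justifies working with ordinary rather than twisted coefficients.
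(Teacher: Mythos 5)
Your proposal is correct and follows essentially the same route as the paper: use the preceding corollary to place $[\theta(A,B)]$ in untwisted $H^2(X;\Z^n)\cong (H^2(X;\Z))^n=0$, then apply Theorem \ref{T: U equiv}. No issues.
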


\begin{proof}
The preceding corollary implies that $\theta(A,B)$ is in $H^2(X;\Z^n)$. But $H^2(X;\Z^n) \cong (H^2(X;\Z))^n$.  Now apply 
Theorem 3.2.
\end{proof}

\section{Obstruction relations}\label{S: OR}

In this section, we will consider how the invariants $[\theta(A,B)]$ are related to each other as the matrices $A$ and $B$ vary. In previous sections
our main consideration was whether or not $[\theta(A,B)]=0$. Now we will be more concerned with particular elements of cohomology groups, and, in order
for us to be precise, it will be necessary for us to look under the hood a bit more and pin down better descriptions of our cohomology groups and obstruction elements. 

\subsection{Review of the obstruction cochain}
First, let us describe in more detail the definition of the obstruction cochain $\theta^{2}(\td \Phi^1_{A,B})$ as used in the proof of Theorem \ref{T: U equiv}. 
More generally, recall (\cite[Section VI.5]{Wh}) that if $f:X\longrightarrow B$ is a map from a CW complex $X$ to a space $B$, if $p:E\longrightarrow B$ is a fibration, 
and if $\td f^k:X^k\longrightarrow E$ is a lift of the restriction of $f$ to the $k$-skeleton $X^k$, then we have defined an obstruction cochain $\theta^{k+1}(\td f^k)$. 
This cellular cochain is defined as follows: First, we may as well assume $X$ is connected, or we can work on each component separately. Because $X$ is connected, 
we can assume that $X$ has a single $0$-cell to serve as a basepoint and that every cell attachment map is a basepoint-preserving map. Let $\e^{k+1}$ be a cell
of $X$, with characteristic map $h:(\Delta^{k+1}, \bd \Delta^{k+1})\longrightarrow (X^{k+1}, X^k)$. The composition of $\td f^k$ with the restriction of $h$ to 
$\bd \Delta^{k+1}$ gives a lift map $\bd \Delta^{k+1}\longrightarrow E$ or, equivalently, to the pullback of $E$ over $\Delta^{k+1}$. As $\Delta$ is contractible, 
the pullback of $E$ over $\Delta$ is a trivial fibration (up to a homotopy equivalence that we can assume fixes the fiber over the basepoint) and so is homotopy
equivalent to the fiber $F_0$ of $E$ over the basepoint. So our lift of $\bd \Delta^{k+1}$ to the pullback of $E$ over $\Delta^{k+1}$ defines an element of 
$[S^k,F_0]$, the set of homotopy classes of maps of $k$-spheres to $F_0$. Given the assumption that $F_0$ is $k$-simple, we can identify  $[S^k,F_0]$ with
$\pi_k(F_0)$ without concern about basepoints. This assignment from cells of $X$ to elements of $\pi_k(F_0)$ gives a cochain
$\theta^{k+1}(\td f^k)\in C^{k+1}(X; \pi_k(\mc F))$, where $\pi_k(\mc F)$ denotes the local system of coefficients on $X$ with fiber $\pi_k(F_0)$ determined
by the bundle $f^*E$. As noted in Section \ref{S: equiv}, the results of \cite[Sections VI.5 and VI.6]{Wh} imply that $\theta^{k+1}(\td f^k)$ is a cocycle, that its
cohomology class $[\theta^{k+1}(\td f^k)]$ depends only on $\td f^{k-1}$,  and that $[\theta^{k+1}(\td f^k)]=0$ if and only if $\td f^{k-1}$ can be extended to a lift 
of $f$ over $X^{k+1}$.  It is useful to observe  that  finding a lift of $f:X\longrightarrow B$ to $E$ is equivalent to finding a section of the induced bundle 
$f^*E$ over $X$ (see \cite[Section VI.5]{Wh}), and, in fact, the definition of $\theta^{k+1}(\td f^k)$ remains identical viewing the problem in this light.  

\subsection{Basing the coefficient systems}
Let us return now to our obstructions $[\theta(A,B)]$ in $H^2(X;\Pi_{A,B})$, where $A,B\in M_n(C(X))$ are normal multiplicity-free matrices with a common 
characteristic polynomial $\mu$. Here $\Pi_{A,B}$ is the bundle of groups over $X$ with fibers $\pi_1(F_x)$, where $F_x\cong T^n$ is the fiber of 
$\Phi_{A,B}^*E_n$ over $x\in X$. By the results of Section \ref{S: monodromy}, we know that the bundle structure of $\Pi_{A,B}$ depends only on the common
characteristic polynomial of $A$ and $B$. In particular, Proposition \ref{clean} says that if we choose an ordering $\Lambda$ of the common eigenvalues of 
$A$ and $B$ over the basepoint $x_0\in X$, then, up to isomorphism, $\Pi_{A,B}$ is the bundle corresponding to the representation 
$\rho:\pi_1(X,x_0)\longrightarrow \Aut(\pi_1(F_0))\cong \Aut(\Z^n)$ determined by the permutation of the roots of the characteristic polynomial as we move 
along a loop. Technically, in the language of Proposition \ref{clean}, we have $\rho([\gamma])= \Sigma_{\mf m_\mu([\gamma])}$, where $\Sigma$ is the
permutation matrix corresponding to the permutation $\mf m_\mu([\gamma])\in S_{\Lambda}$. 

The nice thing about $\Z^n_\rho$ is that it does not refer to $A$ and $B$ at all, except through their common characteristic polynomial, and so it provides a
neutral coefficient system in which to compare elements of $H^2(X;\Pi_{A,B})$ for various $A$ and $B$. However, in order to do this, we need to be explicit 
about our isomorphisms $\Z^n_\rho\cong \Pi_{A,B}$. Already this is a bit of notational abuse, as $\Z^n_\rho$ and $\Pi_{A,B}$ live in different categories:
$\Z^n_\rho$ is a group with a  $\pi_1(X,x_0)$ representation and  $\Pi_{A,B}$ is a bundle of groups. To remedy this, \cite[Theorem VI.1.12]{Wh} tells us how
to construct a specific bundle of groups corresponding to $\Z^n_\rho$  with fiber $\Z^n$ identically over the basepoint, and we can abuse notation by allowing
$\Z^n_\rho$ also to stand for this bundle. As we already know that $\Z^n_\rho$ and $\Pi_{A,B}$ are isomorphic (discrete) bundles, it suffices to specify an
isomorphism between them over $x_0$ in order to determine an isomorphism completely. We will refer to this as ``basing'' $\Pi_{A,B}$ because we can think of such
an isomorphism as determining a basis of $\pi_1(F_{0})$ by imposing the image of the standard basis of $\Z^n$. This is analogous to orienting a manifold $M^n$
via an isomorphism from the constant bundle with $\Z$ coefficients (and an arbitrary fixed generator of $\Z$) to the orientation bundle with fibers $H_n(M,M-\{x\})$. 
As in that setting, the exact basing, which is determined completely by our ordering of the eigenvalues over $x_0$,  will not necessarily be so important as the 
establishment of a single reference frame by which to compare objects.

 If $x_0$ is the basepoint of $X$, then the fiber of $\Pi_{A,B}$ over $x_0$ has the form $\pi_1(F_{0})$, where $F_{0}=\{((\mc P_0,\mc Q_0,\sigma_0),U)\}$ with 
 $(\mc P_0,\mc Q_0,\sigma_0)=\Phi_{A,B}(x_0)$ and $U$ ranging over the set of unitary matrices taking the eigenspaces of $A$ to the corresponding eigenspaces
of $B$.  We choose the standard  basis $\{b_i\}_{i=1}^n$ for $\Z^n$, and we suppose that we have chosen an ordering $\Lambda=\{\lambda_1,\ldots,\lambda_n\}$ 
of the roots of $\mu(x_0)$. This determines corresponding orderings of the spectral projections of $A(x_0)$ and $B(x_0)$. Now,  we can define an isomorphism 
$\mf o_{A,B}:\Z^n\longrightarrow \pi_1(F_{0})$ such that $\mf o_{A,B}(b_i)=[\ell_i]$, where $[\ell_i]\in \pi_1(F_{0})\cong [S_1,F_{0}]$ is defined as in the proof of
Proposition \ref{clean}. Note that the definition there of the loop $\ell_i$ depended on a choice of matrix $U_0$ to obtain a basepoint $((\mc P,\mc Q,\sigma),U_0)$ 
in the fiber, but the free homotopy class $[\ell_i]\in [S_1,F_{0}]$ does not depend on this choice.   Because we know that $\Z^n_\rho$ and $\Pi_{A,B}$ are abstractly
isomorphic, the map $\mf o_{A,B}$ extends to an isomorphism of systems of local coefficients.

\subsection{The transposition relation}
We will now utilize our bundle isomorphisms $\mf o_{A,B}$ to study the relationship between $[\theta(A,B)]$ and $[\theta(B,A)]$.

Observe that the space $E_n$ possesses an involution $\td \nu: E_n\longrightarrow E_n$ given by
$$\td \nu((\mc P,\mc Q, \sigma),U)= ((\mc Q,\mc P,\sigma^{-1}), U^{-1}).$$
The map $\td \nu$ is not a bundle map; it does not preserve fibers of $B_n$. However, it covers the involution $\nu$ of $B_n$ given by 
\[
\nu(\mc P,\mc Q, \sigma)= (\mc Q,\mc P,\sigma^{-1}),
\]
so we have a commutative diagram
\begin{diagram}
E_n&\rTo^{\td \nu} & E_n\\
\dTo^p&&\dTo^p\\
B^n&\rTo^\nu &B_n.
\end{diagram}
Furthermore, we can see from the definitions that $\nu\Phi_{A,B}=\Phi_{B,A}$, so  $\td \nu$ induces a bundle map
$\td \nu_\#: \phi_{A,B}^*E_n\longrightarrow \phi_{B,A}^*E_n$ and hence a map of local systems of coefficients that we will denote 
$\td\nu_*: \Pi_{A,B}\longrightarrow \Pi_{B,A}$. 

\begin{lemma}\label{L: orient ABBA}
The following diagram commutes:

\begin{diagram}
\Z^n_{\rho}&\rTo^{\mf o_{A,B}}&\Pi_{A,B}\\
\dTo^{-1}&&\dTo^{\td \nu_*}\\
\Z^n_{\rho}&\rTo^{\mf o_{B,A}}&\Pi_{B,A}.
\end{diagram}
\end{lemma}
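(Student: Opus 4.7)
The plan is to verify commutativity by chasing generators through the diagram. By construction, $\mf o_{A,B}$ sends the standard basis element $b_i \in \Z^n$ to the free homotopy class $[\ell_i^{A,B}]$, where, using the notation from the proof of Proposition \ref{clean},
\[
\ell_i^{A,B}(z) = z\,\sigma_0(P_i(0))U(0)P_i(0) + \sum_{j\neq i}\sigma_0(P_j(0))U(0)P_j(0),
\]
and $U(0)$ is any chosen unitary in the fiber $F_0^{A,B}$. Since $\mf o_{B,A}$ is defined symmetrically, with the roles of $\mc P_0$ and $\mc Q_0$ swapped, $\sigma_0$ replaced by $\sigma_0^{-1}$, and $U(0)$ replaced by $U(0)^{-1} = U(0)^*$, it takes $b_i$ to the class of
\[
\ell_i^{B,A}(w) = w\, P_i(0) U(0)^*Q_i(0) + \sum_{j\neq i} P_j(0) U(0)^* Q_j(0),
\]
where $Q_j(0) = \sigma_0(P_j(0))$.

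The first computational step is to identify the inverse of $\ell_i^{A,B}(z)$. Using that the projections in $\mc P_0$ and $\mc Q_0$ are each pairwise orthogonal and exactly as in the unitarity calculation in the proof of Proposition \ref{fiber structure}, one obtains
\[
\ell_i^{A,B}(z)^{-1} = \ell_i^{A,B}(z)^* = \bar z\, P_i(0) U(0)^* Q_i(0) + \sum_{j\neq i} P_j(0) U(0)^* Q_j(0) = \ell_i^{B,A}(\bar z).
\]
Applying $\td\nu$ pointwise to the loop $z\mapsto ((\mc P_0,\mc Q_0,\sigma_0),\ell_i^{A,B}(z))$ in $F_0^{A,B}$ therefore produces the loop $z\mapsto ((\mc Q_0,\mc P_0,\sigma_0^{-1}),\ell_i^{B,A}(\bar z))$ in $F_0^{B,A}$.

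To conclude, I would observe that the map $z\mapsto \bar z$ reverses the standard orientation of $S^1$, so the loop $z\mapsto \ell_i^{B,A}(\bar z)$ represents the inverse of $[\ell_i^{B,A}]$ in $\pi_1(F_0^{B,A})$. Thus
\[
\td\nu_*\bigl(\mf o_{A,B}(b_i)\bigr) = [\td\nu\circ \ell_i^{A,B}] = -[\ell_i^{B,A}] = \mf o_{B,A}(-b_i),
\]
which is exactly commutativity of the square on generators. Since both $\mf o_{A,B}$ and $\mf o_{B,A}$ are isomorphisms of systems of local coefficients and $\td\nu_*$ is a bundle map over the identity of $X$ (via $\nu\Phi_{A,B}=\Phi_{B,A}$), the identity on generators of the fiber over $x_0$ propagates to commutativity of the whole square. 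The only real point requiring care is the sign: one must remember that $\td\nu$ applies inversion not only to $\sigma_0$ but also to $U$, and it is this inversion on $U$, combined with the $z\mapsto \bar z$ reparameterization it induces on the torus fiber, that produces the $-1$ in the left-hand vertical map.
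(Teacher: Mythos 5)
Your proof is correct and follows essentially the same route as the paper's: chase the generator $b_i$ through the square, identify $\td\nu\ell_i^{A,B}(z)=\ell_i^{A,B}(z)^{-1}$ with $\ell_i^{B,A}(z^{-1})$, and read off the sign from the orientation reversal $z\mapsto \bar z$. The only difference is cosmetic — you compute the inverse as the adjoint (justified by the unitarity established in Proposition \ref{fiber structure}), whereas the paper verifies the same inverse formula by multiplying out and using the orthogonality of the projections.
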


\begin{proof}
Let $F_0$ continue to denote the fiber of $\Phi_{A,B}^*E_n$ over $x_0\in X$, and let $F_0'$ denote the fiber of $\Phi_{B,A}^*E_n$ over $x_0$. By definition, 
the isomorphism $\mf o_{A,B}$ takes the generator $b_i$ of $\Z^n$ to $[\ell_i]$, where the loop $\ell_i$ in $F_0$ has 
\[
\ell_i(z)= z\sigma_0(P_i)U_0P_i + \sum_{j\neq i}\sigma_0 (P_j)U_0P_j
\]
as its unitary coordinate; see the proof of Proposition \ref{clean} and note that we are free to simplify notation a bit here because we will not be lifting a cylinder
as we did in that proof. Here, we have $\{P_i\}=\mc P_0$, though with our chosen ordering. 

From the definition, the map $\td \nu$ takes the loop $\ell_i$ in $F_0$ to a loop $\td \nu \ell_i$ in $F_0'$ that has 
$\td \nu\ell_i(z)=(\ell_i(z))^{-1}$ in its unitary coordinate.  We claim that 
\[
(\ell_i(z))^{-1}= z^{-1}\sigma_0^{-1}(Q_i)U^{-1}_0Q_i + \sum_{j\neq i}\sigma_0^{-1}(Q_j)U^{-1}_0Q_j.
\]
To see this, we consider the products $\sigma^{-1}(Q_j)U^{-1}_0Q_j\sigma_0 (P_k)U_0P_k.$ First, observe that $\sigma_0 (P_k)=Q_k$ and $\sigma^{-1}(Q_j)=P_j$, 
so we can simplify this expression to $P_jU^{-1}_0Q_jQ_k U_0P_k.$ If $j\neq k$, then $Q_jQ_k=0$ as composition of two projections in orthogonal directions. 
If $j=k$, then $Q_jQ_k=Q_jQ_j=Q_j$. Furthermore, as $U_0$ takes the range of $P_k$ to the range of $Q_k$ by definition of $E_n$, we actually have
$Q_jU_0P_k=U_0P_k$. So 
\[
P_jU^{-1}_0Q_jQ_j U_0P_j=P_jU^{-1}_0 U_0P_j=P_jP_j=P_j.
\]
Therefore, multiplying $\ell_i(z)$ by our claimed inverse, distributing, and removing terms that equal zero, we obtain the expression $\sum_j P_j$; this is the
identity because the $P_j$ are $n$ mutually orthogonal projections whose ranges span $\mathbb{C}^n$.

Now, suppose  $\mf o_{B,A}(b_i)=[\ell'_i]$, where $\ell'_i$ is defined analogously to $\ell_i$. For convenience, we can use $U_0^{-1}$ as our basepoint in 
$F'_0$, though, again, the choice of basepoint doesn't really matter. Then we see that $\mf o_{B,A}$ takes $b_i$ to the class of the loop 
\[
z\sigma^{-1}_0(Q_i)U^{-1}_0Q_i + \sum_{j\neq i}\sigma^{-1}_0 (Q_j)U_0^{-1}Q_j.
\]
But this is the negative of the class of the loop $\td \nu\ell_i(z)$, proving the lemma. 
\end{proof}

Next, let us relate $[\theta(A,B)]\in H^2(X;\Pi_{A,B})$ with $[\theta(B,A)]\in H^2(X;\Pi_{B,A})$. For this, we utilize that a map of local systems of coefficients
induces a (covariant) homomorphism on cohomology.

\begin{lemma}\label{L: transpose}
The map $\td \nu_*: H^2(X;\Pi_{A,B}) \longrightarrow H^2(X;\Pi_{B,A})$ takes $[\theta(A,B)]$ to $[\theta(B,A)]$. 
\end{lemma}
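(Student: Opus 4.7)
The plan is to prove this by direct naturality of the obstruction cochain construction with respect to the bundle map $\td\nu_\#:\Phi_{A,B}^*E_n\longrightarrow \Phi_{B,A}^*E_n$. Recall that both obstructions are constructed by first choosing a lift over the $1$-skeleton, then evaluating the obstruction cochain on $2$-cells by reading off a homotopy class of loops in the fiber; since any such choice yields a cocycle with the same cohomology class, it suffices to exhibit compatible choices of lifts for $\Phi_{A,B}$ and $\Phi_{B,A}$ under which the cochains correspond exactly via $\td\nu_*$.

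First, I would pick a lift $\td\Phi_{A,B}^1:X^1\longrightarrow E_n$ of $\Phi_{A,B}$ over the $1$-skeleton, as in the proof of Theorem \ref{T: U equiv}. Because $\td\nu$ covers $\nu$ and $\nu\Phi_{A,B}=\Phi_{B,A}$, the composition $\td\Phi_{B,A}^1:=\td\nu\circ\td\Phi_{A,B}^1$ is automatically a continuous lift of $\Phi_{B,A}$ over $X^1$, and so it is a legal choice for the $1$-skeleton lift in the construction of $[\theta(B,A)]$. Since the cohomology class of the obstruction cocycle is independent of the chosen lift, I may use these matched lifts when comparing.

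Next, I would compare the obstruction cochains cell-by-cell. Let $e^2$ be a $2$-cell of $X$ with characteristic map $h:(\Delta^2,\partial\Delta^2)\longrightarrow(X^2,X^1)$. The obstruction value $\theta^2(\td\Phi_{A,B}^1)(e^2)$ is, by construction, the free homotopy class in $\pi_1(F_{h(v_0)})$ of the loop $\td\Phi_{A,B}^1\circ h|_{\partial\Delta^2}$, viewed inside the fiber of $\Phi_{A,B}^*E_n$ after a contraction of $\Delta^2$. Applying $\td\nu$ to this loop yields $\td\Phi_{B,A}^1\circ h|_{\partial\Delta^2}$, which is exactly the loop whose class computes $\theta^2(\td\Phi_{B,A}^1)(e^2)$. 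But by definition, the induced map $\td\nu_*:\Pi_{A,B}\longrightarrow\Pi_{B,A}$ is precisely the map on fiberwise fundamental groups induced by $\td\nu_\#$, so $\theta^2(\td\Phi_{B,A}^1)(e^2)=\td\nu_*\bigl(\theta^2(\td\Phi_{A,B}^1)(e^2)\bigr)$ for every $2$-cell. Passing to cohomology gives $[\theta(B,A)]=\td\nu_*[\theta(A,B)]$.

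The main obstacle I anticipate is purely bookkeeping: making sure that the identification of $\pi_1$ of the contracted-$\Delta^2$ fibers with the fibers of $\Pi_{A,B}$ and $\Pi_{B,A}$ over $h(v_0)$ commutes with the $\td\nu_\#$ action. This should follow because $\td\nu$ is a global, continuous map of total spaces and so commutes with any contraction of $\Delta^2$ that is used in both constructions in parallel; equivalently, one can observe that $\td\nu_\#$ is itself a map of fibrations over $X$, so the naturality of obstruction cocycles for fibration maps (see, e.g., \cite[Theorem V.5.3]{Wh} as invoked in the proof of Proposition \ref{P: naturality}) applies directly and gives the desired equality on the nose.
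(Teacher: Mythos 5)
Your proposal is correct and follows essentially the same route as the paper's own proof: both take $\td\nu\circ\td\Phi_{A,B}^1$ as the matched lift of $\Phi_{B,A}$ over $X^1$, compare the obstruction cochains cell by cell by applying $\td\nu$ to the boundary loops in the fibers, and conclude $\theta^2(\td\Phi_{B,A}^1)=\td\nu_*\theta^2(\td\Phi_{A,B}^1)$ on the nose before passing to cohomology. No substantive difference.
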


\begin{proof}
Let $\theta^2(\td \Phi_{A,B}^1)$ denote the obstruction cochain determined by the lift $\td \Phi_{A,B}^1:X^1\longrightarrow E_n$ of the restriction of $\Phi_{A,B}$ 
to $X^1$. As we reviewed at the beginning of this section, $\theta^2(\Phi_{A,B}^1)$ acts on a cell $\Delta^2$ by thinking of $\td \Phi_{A,B}^1$ as providing a section
of the pullback of $E_n$ to $\Delta^2$, which determines a loop $\td \Phi_{A,B}^1:\bd\Delta^2\longrightarrow F_0$, after identifying the pullback over $\Delta^2$ 
as $\Delta^2\times F_0$, up to a fiberwise homotopy equivalence (fixing $F_0$). Composing this section over $\bd \Delta^2$ with the pullback of $\td \nu$ to 
$\Delta^2$ then yields an element of $\pi_1(F_0')$ which is precisely the value of the obstruction cochain $\theta^2(\td \nu\td \Phi_{A,B}^1)$. But 
$\td \nu\td \Phi_{A,B}^1$ is a lift over $X^1$ of $\nu\Phi_{A,B}=\Phi_{B,A}$, so we can define $\td \Phi^1_{B,A}=\nu \td \Phi_{A,B}^1$. Also, taking the image of 
a loop in $F_0$ to a loop in $F_0'$ via $\td \nu$ is precisely $\td \nu_*$, so we obtain  
\[
\theta^2(\td \Phi_{B,A}^1)=\theta^2(\td \nu\td \Phi_{A,B}^1)=\td \nu_*\theta^2(\td \Phi_{A,B}^1).
\]
But these $\theta^2$ are the cochains that represent the obstruction cohomology classes, so we have 
\[
[\theta(B,A)]=\td \nu_*[\theta^2(A,B)].
\]
\end{proof}

\begin{remark}\label{R: ABBA}
Informally, we would really like to say something like $[\theta(B,A)]=-[\theta(A,B)]$, which makes some intuitive sense. However, part of the point of the preceding
discussion is that such a statement does not quite make sense because $[\theta(A,B)]$ and $[\theta(B,A)]$ live in groups that have \emph{isomorphic} coefficient 
systems but not \emph{identical} coefficient systems. That said, Lemma \ref{L: orient ABBA}, together with Lemma \ref{L: transpose}, shows that if we base the 
coefficient systems $\Pi_{A,B}$ and $\Pi_{B,A}$ via $\mf o_{A,B}$ and $\mf o_{B,A}$ and then pull back both $[\theta(A,B)]$  and
$[\theta(B,A)]$ to $H^2(X;\Z^n_\rho)$ using these bases, then the images of $[\theta(A,B)]$  and $[\theta(B,A)]$ in $H^2(X;\Z^n_\rho)$ are negatives of each other. 
\end{remark}

\subsection{The additivity relation}
Suppose that $A,B,C\in M_n(C(X))$ are normal and multiplicity free with a common characteristic polynomial.  
We study the relationship between the obstructions $[\theta(A,B)]$, $[\theta(B,C)]$, and $[\theta(A,C)]$.

For this, we first construct a bundle morphism
\[
m_{A,B,C}: \Phi^*_{A,B}E_n \oplus \Phi^*_{B,C}E_n \longrightarrow \Phi^*_{A,C}E_n.
\]
Over a point $x\in X$, the fiber $\Phi^*_{A,B}E_n$ consists of elements of the form $\bigl((\mathcal{P}, \mathcal{Q}, \sigma), U\bigr)$, where 
$(\mathcal{P}, \mathcal{Q}, \sigma)=\Phi_{A,B}(x)$. Similarly,  the fiber of $\Phi^*_{B,C}E_n $ at $x$ consist of elements of the form 
$\bigl((\mathcal{Q}, \mathcal{R}, \tau), V\bigr)$. Then we define $m_{A,B,C}$ over $x$ by
\[
m_{A,B,C,x}\Bigl(\bigl((\mathcal{P}, \mathcal{Q}, \sigma), U\bigr), \bigl((\mathcal{Q}, \mathcal{R}, \tau), V\bigr)\Bigr)
= \bigl((\mathcal{P}, \mathcal{R}, \tau\sigma), VU\bigr).
\]

This is well defined because if $\Phi_{A,B}(x)=(\mathcal{P}, \mathcal{Q}, \sigma)$ and $\Phi_{B,C}(x)=(\mathcal{Q}, \mathcal{R}, \tau)$, then $\Phi_{A,C}(x)$ 
must be $(\mathcal{P}, \mathcal{R}, \tau\sigma)$, as we see by considering the eigenspaces of $A(x)$, $B(x)$, and $C(x)$. Furthermore, if $U$ takes the 
eigenspaces of $A(x)$ to the corresponding eigenspaces of $B(x)$ and if $V$ takes the eigenspaces of $B(x)$ to the corresponding eigenspaces of $C(x)$, 
then $VU$ must take the eigenspaces of $A(x)$ to the corresponding eigenspaces of $C(x)$. 
As $x$ ranges over $X$, the maps $m_{A,B,C,x}$ induces a map of coefficient systems 
$m_{A,B,C\#}: \Pi_{A,B} \oplus \Pi_{B,C} \longrightarrow \Pi_{A,C}$. 

\begin{lemma}\label{L: sums}
We have a commutative diagram
\begin{diagram}
\Z^n_\rho\oplus \Z^n_\rho & \rTo^{\mf o_{A,B}\oplus \mf o_{B,C}}&\Pi_{A,B}\oplus \Pi_{B,C}\\
\dTo^+&&\dTo^{m_{A,B,C\#}}\\
\Z^n_\rho& \rTo^{\mf o_{A,C}} &\Pi_{A,C}.
\end{diagram}
Here $+$ denotes the addition operation in $\Z^n_\rho$. 
\end{lemma}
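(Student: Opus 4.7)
The plan is to verify the commutativity fiberwise over the basepoint $x_0\in X$. First, I would observe that every map appearing in the diagram is a morphism of local systems of coefficients: the additions $+$ and the orienting isomorphisms $\mf o_{A,B},\mf o_{B,C},\mf o_{A,C}$ by construction, and $m_{A,B,C\#}$ because it is induced by the continuous (in fact, fiberwise) bundle map $m_{A,B,C}$. Since (without loss of generality) $X$ is connected, it therefore suffices to show that the two compositions agree on the fiber over $x_0$, i.e., as homomorphisms $\Z^n\oplus\Z^n\longrightarrow \pi_1(F_0'')$, where $F_0,F_0',F_0''$ are the fibers over $x_0$ of $\Phi_{A,B}^*E_n$, $\Phi_{B,C}^*E_n$, and $\Phi_{A,C}^*E_n$, respectively.

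Second, I would exploit additivity: the map $m_{A,B,C\#}$ on fibers over $x_0$ is the $\pi_1$ of the continuous map $F_0\times F_0'\longrightarrow F_0''$ sending $(U,V)\mapsto VU$, and so it is automatically an abelian group homomorphism. It therefore decomposes along the two factors, and it suffices to check the two separate equalities
\[
m_{A,B,C\#}\bigl(\mf o_{A,B}(b_i),0\bigr)=\mf o_{A,C}(b_i),\qquad m_{A,B,C\#}\bigl(0,\mf o_{B,C}(b_i)\bigr)=\mf o_{A,C}(b_i)
\]
for each standard basis vector $b_i\in\Z^n$. For basepoints in the fibers I take $U_0\in F_0$, $V_0\in F_0'$, and their product $V_0U_0\in F_0''$ (which is indeed in $F_0''$, since $V_0U_0$ carries each eigenspace of $A(x_0)$ to the corresponding eigenspace of $C(x_0)$).

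Third, I would carry out the two direct computations, using the explicit loops $\ell^{AB}_i$, $\ell^{BC}_i$, $\ell^{AC}_i$ from the proof of Proposition~\ref{clean}. For the first equality, the pair $\bigl(\ell^{AB}_i(z),V_0\bigr)$ maps under $m_{A,B,C}$ to the unitary
\[
V_0\Bigl(z\,\sigma_0(P_i)U_0P_i+\sum_{j\neq i}\sigma_0(P_j)U_0P_j\Bigr).
\]
Using the commutation identity $V_0Q_j=\tau_0(Q_j)V_0$ (which holds because $V_0Q_jV_0^{-1}=\tau_0(Q_j)$ by the definition of $F_0'$) together with $\sigma_0(P_j)=Q_j$ and $\tau_0\sigma_0(P_j)=\tau_0(Q_j)$, this loop simplifies to the standard expression for $\ell^{AC}_i$ based at $V_0U_0$. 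For the second equality I would make the analogous calculation using the identity $Q_jU_0=U_0P_j$, which holds because $U_0$ maps $\ran P_k$ into $\ran Q_k$ and these ranges are pairwise orthogonal.

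I anticipate that the main obstacle is simply keeping the bookkeeping straight: matching the three basepoint choices so that the product $V_0U_0$ is the basepoint in $F_0''$, and verifying that the simplifications using $V_0Q_j=\tau_0(Q_j)V_0$ and $Q_jU_0=U_0P_j$ indeed reproduce the formula for $\ell^{AC}_i$ exactly, term by term. Once the two generator equalities are established, additivity of $m_{A,B,C\#}$ gives
\[
m_{A,B,C\#}\bigl(\mf o_{A,B}(v),\mf o_{B,C}(w)\bigr)=\mf o_{A,C}(v)+\mf o_{A,C}(w)=\mf o_{A,C}(v+w)
\]
for every $(v,w)\in\Z^n\oplus\Z^n$, which is exactly the commutativity of the diagram on fibers at $x_0$, and hence, by the first paragraph, as maps of local systems.
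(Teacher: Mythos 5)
Your proof is correct and takes essentially the same approach as the paper: verify the diagram fiberwise over $x_0$, choose compatible basepoints $U_0$, $V_0$, $V_0U_0$, and trace the explicit generating loops $\ell_i$, $\ell'_i$ through $m_{A,B,C}$ using the relations $\sigma_0(P_j)=Q_j$, $Q_jU_0=U_0P_j$, and $V_0Q_j=\tau_0(Q_j)V_0$. The only (minor, cosmetic) organizational difference is that you invoke additivity of $\pi_1$ of the multiplication map to reduce to the two ``basis'' pairs $(\ell_i,\mathrm{const})$ and $(\mathrm{const},\ell'_i)$, whereas the paper computes $m_{A,B,C\#}([\ell_i]\oplus[\ell'_k])$ directly and splits into the cases $i\ne k$ (getting $z$ in two slots) and $i=k$ (getting $z^2$ in one slot); your variant sidesteps that case distinction but the underlying computation is the same.
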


\begin{proof}
Let us verify the commutativity over the basepoint $x_0$. This suffices, as all maps are bundle maps. We can assume we have fixed an ordering $\Lambda$ of
the zeros of $\mu(x_0)$.  For convenience, we can also choose a basepoint $U_0$ in the fiber $F_0$ of $\Phi^*_{A,B}E_n$ over $x_0$ and a basepoint $V_0$ 
in the fiber $F'_0$ of $\Phi^*_{B,C}E_n$ over $x_0$. We let $V_0U_0$ be a basepoint in the fiber $F''_0$ of $\Phi^*_{B,C}E_n$ over $x_0$.

Let $[\ell_i]$ be the generators of $\pi_1(F_0)$ employed in Proposition \ref{clean} and earlier in this section, i.e. 
\[
\ell_i(z)= z\sigma_0(P_i)U_0P_i + \sum_{j\neq i}\sigma_0 (P_j)U_0P_j.
\]
Similarly, let 
\[
\ell'_i(z)= z\tau_0(Q_i)V_0Q_i + \sum_{j\neq i}\tau_0 (Q_j)V_0Q_j
\]
be loops generating $\pi_1(F'_0)$.

Next, consider the products of the form $\tau_0 (Q_i)V_0Q_i\sigma_0 (P_k)U_0P_k$. Because $\sigma_0(P_k)=Q_k$, this becomes 
$\tau_0 (Q_i)V_0Q_iQ_k U_0P_k$. If $i\neq k$, then $Q_iQ_k=0$, as these are orthogonal projections; in this case, the entire product is $0$. If $i=k$, then
we have $Q_iQ_k U_0P_k=Q_kQ_kU_0P_k=Q_kU_0P_k=U_0P_k$,  because $U_0$ takes the range of $P_k$ to the range of $Q_k$. Therefore
\[
\tau_0 (Q_k)V_0Q_k\sigma_0 (P_k)U_0P_k=\tau_0 (Q_k)V_0U_0P_k=\tau_0\sigma_0 (P_k)V_0U_0P_k.
\]
Multiplying and distributing, we see that if $j\neq k$, then 
\[
m_{A,B,C, x_0\#}([\ell_i]\oplus [\ell'_k])= z\tau_0\sigma_0 (P_i)V_0U_0P_i+z\tau_0\sigma_0 (P_k)V_0U_0P_k+ \sum_{j\neq i,k}\tau_0\sigma_0 (P_j)V_0U_0P_j,
\]
while if $i=k$, we have 
\[
m_{A,B,C, x_0\#}([\ell_i]\oplus [\ell'_k])= z^2\tau_0\sigma_0 (P_i)V_0U_0P_i+ \sum_{j\neq i}\tau_0\sigma_0 (P_j)V_0U_0P_j.
\]

Comparing with the standard representations of generators of $\pi_1(T^n)$, these computations demonstrate the commutativity of the diagram.
\end{proof}

\begin{remark}
It follows that the induced map
\[
m_{A,B,C*}: H^2(X; \Pi_{A,B}) \oplus H^2(X; \Pi_{B,C})\cong H^2(X; \Pi_{A,B} \oplus \Pi_{B,C}) \longrightarrow
H^2(X; \Pi_{A,C})
\]
can be thought of as simple addition in the coefficients, after using our basings to re-identity this product
as a map 
\[
H^2(X; \Z^n_\rho) \oplus H^2(X; \Z^n_\rho)\cong H^2(X; \Z^n_\rho \oplus \Z^n_\rho) \longrightarrow
H^2(X; \Z^n_\rho).
\]
\end{remark}

\begin{lemma}\label{L: sum}
$m_{A,B,C*}\bigl([\theta(A,B)], [\theta(B,C)]\bigr) = [\theta(A,C)]$. 
\end{lemma}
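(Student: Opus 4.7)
The plan is to verify the identity on the level of obstruction cochains by using a coordinated choice of lifts over the $1$-skeleton, and then to pass to cohomology.

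First, using that $\pi_0$ of the fiber vanishes, I would choose arbitrary lifts $\widetilde{\Phi}_{A,B}^1, \widetilde{\Phi}_{B,C}^1 : X^1 \longrightarrow E_n$ of the restrictions of $\Phi_{A,B}$ and $\Phi_{B,C}$ to $X^1$, exactly as in the proof of Theorem \ref{T: U equiv}. Writing $\widetilde{\Phi}_{A,B}^1(x) = (\Phi_{A,B}(x), U(x))$ and $\widetilde{\Phi}_{B,C}^1(x) = (\Phi_{B,C}(x), V(x))$, I would then set
\[
\widetilde{\Phi}_{A,C}^1(x) = m_{A,B,C,x}\bigl(\widetilde{\Phi}_{A,B}^1(x), \widetilde{\Phi}_{B,C}^1(x)\bigr) = \bigl(\Phi_{A,C}(x), V(x)U(x)\bigr).
\]
By construction of $m_{A,B,C}$, this is a continuous lift of $\Phi_{A,C}|_{X^1}$ into $E_n$, since $V(x)U(x)$ takes each spectral projection of $A(x)$ to the corresponding spectral projection of $C(x)$ and depends continuously on $x$.

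Next, I would evaluate the three obstruction cochains on a $2$-cell $e$ with characteristic map $h:(\Delta^2, \partial\Delta^2) \longrightarrow (X^2, X^1)$. As reviewed at the start of Section \ref{S: OR}, each of the three pullback bundles over the contractible disk $\Delta^2$ is trivializable, and the value of the cochain on $e$ is the free homotopy class in $\pi_1$ of the fiber determined by the section $\widetilde{\Phi}^1 \circ h|_{\partial \Delta^2}$. Because $m_{A,B,C}$ is a morphism of fiber bundles over $X$, any trivializations of $h^*\Phi_{A,B}^*E_n$ and $h^*\Phi_{B,C}^*E_n$ induce, via fiberwise application of $m_{A,B,C}$, a trivialization of $h^*\Phi_{A,C}^*E_n$ with respect to which the section determined by $\widetilde{\Phi}_{A,C}^1$ is the pointwise $T^n$-product of the sections determined by $\widetilde{\Phi}_{A,B}^1$ and $\widetilde{\Phi}_{B,C}^1$. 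By Lemma \ref{L: sums}, this pointwise product in $T^n$ corresponds, after passage to $\pi_1$, precisely to the induced map $m_{A,B,C\#}$ of coefficient systems. Therefore, at the cochain level,
\[
\theta^2(\widetilde{\Phi}_{A,C}^1)(e) = m_{A,B,C\#}\bigl(\theta^2(\widetilde{\Phi}_{A,B}^1)(e),\ \theta^2(\widetilde{\Phi}_{B,C}^1)(e)\bigr).
\]
Since this holds for every $2$-cell $e$, and since each cohomology class $[\theta(\cdot,\cdot)]$ is independent of the choice of lift (established in the proof of Theorem \ref{T: U equiv}), passing to cohomology yields the desired identity $m_{A,B,C*}\bigl([\theta(A,B)], [\theta(B,C)]\bigr) = [\theta(A,C)]$.

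The main obstacle is the step asserting that the trivialization of $h^*\Phi_{A,C}^*E_n$ induced by $m_{A,B,C}$ turns the section coming from $\widetilde{\Phi}_{A,C}^1$ into the pointwise product of the other two sections, and that this pointwise product descends to the claimed sum on $\pi_1$. Once one unwinds the fact that $m_{A,B,C}$ restricts fiberwise to ordinary group multiplication in $T^n$ (which is essentially the content of the computation in Lemma \ref{L: sums}), the translation of the product loop to the sum in $\pi_1(T^n) \cong \Z^n$ is automatic, because $\pi_1$ of a topological group is abelian with group operation induced by the multiplication.
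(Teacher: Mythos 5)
Your proof is correct and follows essentially the same argument as the paper: you define the lift of $\Phi_{A,C}$ over $X^1$ as the $m_{A,B,C}$-image of the chosen lifts of $\Phi_{A,B}$ and $\Phi_{B,C}$, evaluate the obstruction cochains on each $2$-cell via trivializations over the disk, and identify the resulting $\pi_1$-value with $m_{A,B,C\#}$ applied to the other two cochain values. The only cosmetic difference is that you explicitly invoke Lemma~\ref{L: sums} and the general fact about $\pi_1$ of a topological group to finish, whereas the paper folds that identification directly into its discussion of $m_{A,B,C,x_0*}$.
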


\begin{proof}
We can represent $[\theta(A,B)]$ by $\theta^2(\td f^1)$, where $\td f^1$ is a section of  $\Phi^*_{A,B}E_n$ over $X^1$, and similarly, 
we can represent $[\theta(B,C)]$ by $\theta^2(\td g^1)$, where $\td g^1$ is a section of $\Phi^*_{B,C}E_n$ over $X^1$. As $m_{A,B,C}$ is a bundle map,
the composition
\[
X^1 \xr{\td f^1\oplus \td g^1} \Phi^*_{A,B}E_n \oplus \Phi^*_{B,C}E_n \xr{m_{A,B,C}} \Phi^*_{A,C}E_n,
\]
which we denote $\td h^1$, is a section of  $\Phi^*_{A,C}E_n$ over  $X^1$. Therefore, 
$[\theta(A,C)]=[\theta^2(\td h^1)]$.

On the other hand, by definition, we know that the cochain $\theta^2(\td h^1)$ acts on a $2$-cell $e^2$  of $X$ as follows: the bundle $\Phi^*_{A,C}E_n$ pulls
back to a fiber homotopically trivial $F''_0\cong T^n$ bundle over $\Delta^2$ via the characteristic map $i:(\Delta^2, v_0)\longrightarrow (X,x_0)$, and the section
$\td h^1$ pulls back to a section over $\bd \Delta^2$. Via the fiber homotopy trivialization $i^*\Phi^*_{A,C}E_n\cong \Delta^2\times F''_0$ of the bundle over 
$\Delta^2$, which we can assume is the identity on $F''_0$, and the projection $\Delta^2\times F''_0\longrightarrow F''_0$, we determine a class in  $\pi_1(F''_0)$ 
that is the value of $\theta^2(\td h^1)$ on $e^2$. Of course, $\theta^2(\td f^1)$ and $\theta^2(\td g^1)$ are defined similarly, and 
$m_{A,B,C\#}(\theta^2(\td f^1),\theta^2(\td g^1))$ takes the value on $e^2$ corresponding to the product $m_{A,B,C,x_0*}(\theta^2(\td f^1)(e^2), \theta^2(\td g^1)(e^2))$.
In this last expression, $\theta^2(\td f^1)(e^2)\in \pi_1(F_0)$ and $\theta^2(\td g^1)(e^2)\in \pi_1(F_0')$ are loops and
$m_{A,B,C,x_0*}(\theta^2(\td f^1)(e^2), \theta^2(\td g^1)(e^2))$ is the value under the induced map 
$m_{A,B,C,x_0*}:\pi_1(F_0)\times \pi_1(F_0')\longrightarrow \pi_1(F_0'')$. Up to homotopy, this is simply the product (via $m_{A,B,C}$) of the sections over
$\bd \Delta^2$ of the pullbacks of $\Phi^*_{A,B}E_n$ and $\Phi^*_{B,C}E_n$. But this is precisely the section determined by $\td h^1$. So 
$\theta^2(h^1)=m_{A,B,C*}(\theta(\td f^1), \theta(\td g^1))$. 

Thus we conclude that $[\theta(A,C)]=m_{A,B,C*}\bigl([\theta(A,B)], [\theta(B,C)]\bigr)$. 
\end{proof}

\begin{corollary}\label{C: add corollary}
$m_{A,B,C*}([\theta(A,B)], [\theta(B,C)]) = 0$ if and only if $A$ and $C$ are unitarily equivalent.
\end{corollary}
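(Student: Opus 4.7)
The plan is to combine the two main results we have just established, namely Lemma \ref{L: sum} and Theorem \ref{T: U equiv}. Lemma \ref{L: sum} gives the identification $m_{A,B,C*}\bigl([\theta(A,B)], [\theta(B,C)]\bigr) = [\theta(A,C)]$ in $H^2(X;\Pi_{A,C})$, so the hypothesis of the corollary translates immediately into the statement $[\theta(A,C)] = 0$.

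From there, the equivalence with unitary equivalence of $A$ and $C$ is exactly the content of Theorem \ref{T: U equiv}, which applies because $A$ and $C$ are normal, multiplicity free, and share the common characteristic polynomial $\mu$ assumed in the statement. Concretely, I would write: by Lemma \ref{L: sum}, the vanishing of $m_{A,B,C*}\bigl([\theta(A,B)], [\theta(B,C)]\bigr)$ is equivalent to $[\theta(A,C)] = 0$, and by Theorem \ref{T: U equiv}, the vanishing of $[\theta(A,C)]$ is equivalent to $A$ and $C$ being unitarily equivalent.

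There is essentially no obstacle here; the work has already been done in the preceding lemmas. The only thing worth remarking is that the corollary does \emph{not} require $B$ to play any distinguished role — it is a ``cocycle-like'' statement saying that the class $[\theta(-,-)]$ behaves additively along triples, so we can test unitary equivalence of $A$ and $C$ against any auxiliary $B$ with the same characteristic polynomial. If $X$ is not a CW complex but is homotopy equivalent to one, the same argument goes through using Proposition \ref{P: nonCW ob} in place of Theorem \ref{T: U equiv}, provided one first verifies that Lemma \ref{L: sum} extends to that setting via the pullback construction of Section \ref{S: naturality}.
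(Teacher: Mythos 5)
Your proof is correct and is exactly the paper's argument: apply Lemma \ref{L: sum} to identify $m_{A,B,C*}\bigl([\theta(A,B)], [\theta(B,C)]\bigr)$ with $[\theta(A,C)]$, then invoke Theorem \ref{T: U equiv}. The additional remarks about the auxiliary role of $B$ and the non-CW setting are accurate but not needed for the corollary as stated.
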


\begin{proof}
The preceding lemma states that $m_{A,B,C*}([\theta(A,B)], [\theta(B,C)]) = [\theta(A,C)]$, 
and Theorem \ref{T: U equiv} states that $[\theta(A,C)] = 0$ if and only if $A$ and $C$ are unitarily equivalent. 
\end{proof}

Together, Lemmas \ref{L: sum} and \ref{L: sums} basically say that ``$[\theta(A,B)]+[\theta(B,C)] = [\theta(A,C)]$'' once we have chosen basings that allow us to 
normalize all of the elements into the same group $H^2(X;\Z^n_\rho)$ in a consistent way. Corollary \ref{C: add corollary} then says that $A$ and $C$ are 
unitarily equivalent if and only if ``$[\theta(B,C)]=-[\theta(A,B)]$,'' which, using Remark \ref{R: ABBA}, is equivalent to ``$[\theta(B,C)]=[\theta(B,A)]$.'' So two 
matrices $A$ and $C$ are unitarily equivalent if and only if they fail to be unitarily equivalent to a third matrix $B$ via ``the same'' obstruction. In this sense, 
we see that it makes sense to think of our obstructions $[\theta(A,B)]$ as being defined on equivalence classes of matrices and not just on individual matrices.  

Formalizing these observations leads to the following proposition and its corollary. 

\begin{proposition}\label{P: Os}
Let $X$ be a CW complex and $\mu=\mu(x, \lambda)$ a multiplicity free polynomial over $C(X)$. 
Let $A_0\in M_n(C(X))$ be any normal matrix with characteristic polynomial $\mu$.
Let $\mc O_{A_0}$ denote the  set $\{\mf o_{A_0,B}^{-1}([\theta(A_0,B)])\} \subseteq H^2(X;\Z^n_\rho)$ as $B$ runs over all normal matrices in $M_n(C(X))$
with characteristic polynomial $\mu$. Then there is a bijection between $\mc O_{A_0}$ and the set of unitary equivalence classes of normal matrices over $X$
with characteristic polynomial $\mu$. 
\end{proposition}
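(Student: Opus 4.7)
The plan is to define a map $\Psi$ from the set of unitary equivalence classes of normal multiplicity-free matrices in $M_n(C(X))$ with characteristic polynomial $\mu$ to $H^2(X;\Z^n_\rho)$ by the formula $\Psi([B]) = \mf o_{A_0,B}^{-1}([\theta(A_0,B)])$, to verify that this formula descends to equivalence classes, and then to check that the resulting map is injective. By the definition of $\mc O_{A_0}$, surjectivity of $\Psi$ onto $\mc O_{A_0}$ is automatic, so a bijection between unitary equivalence classes and $\mc O_{A_0}$ will follow.

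The key computational tool is the identity obtained by combining Lemma \ref{L: sum} with Lemma \ref{L: sums}. For any three normal multiplicity-free matrices $A_0, B, B' \in M_n(C(X))$ with common characteristic polynomial $\mu$, Lemma \ref{L: sum} gives
\[
m_{A_0,B,B'*}\bigl([\theta(A_0,B)], [\theta(B,B')]\bigr) = [\theta(A_0,B')],
\]
while Lemma \ref{L: sums} shows that after applying the basing isomorphisms $\mf o$ this becomes ordinary addition in $H^2(X;\Z^n_\rho)$. Pulling back via $\mf o_{A_0,B'}^{-1}$ yields
\[
\mf o_{A_0,B'}^{-1}\bigl([\theta(A_0,B')]\bigr) = \mf o_{A_0,B}^{-1}\bigl([\theta(A_0,B)]\bigr) + \mf o_{B,B'}^{-1}\bigl([\theta(B,B')]\bigr),
\]
which is the formal analogue of the telescoping relation ``$\theta(A_0,B') = \theta(A_0,B) + \theta(B,B')$'' at the level of the normalized group $H^2(X;\Z^n_\rho)$.

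Now I would use this identity twice. For well-definedness, assume $B$ and $B'$ are unitarily equivalent. Then Theorem \ref{T: U equiv} forces $[\theta(B,B')] = 0$, hence $\mf o_{B,B'}^{-1}([\theta(B,B')]) = 0$, so the identity collapses to $\mf o_{A_0,B'}^{-1}([\theta(A_0,B')]) = \mf o_{A_0,B}^{-1}([\theta(A_0,B)])$. Thus $\Psi$ is a well-defined map on unitary equivalence classes. For injectivity, conversely assume $\Psi([B]) = \Psi([B'])$. Subtracting in the identity above gives $\mf o_{B,B'}^{-1}([\theta(B,B')]) = 0$, and since $\mf o_{B,B'}$ is a bundle isomorphism this forces $[\theta(B,B')] = 0$. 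Applying Theorem \ref{T: U equiv} once more shows $B$ and $B'$ are unitarily equivalent, i.e. $[B] = [B']$, so $\Psi$ is injective.

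The main obstacle is not really difficult once the framework of Lemmas \ref{L: sum} and \ref{L: sums} is in place; it is the careful bookkeeping of the basing isomorphisms $\mf o_{A_0,B}$, $\mf o_{A_0,B'}$, and $\mf o_{B,B'}$ that lets us compare classes living in the different (but isomorphic) coefficient systems $\Pi_{A_0,B}$, $\Pi_{A_0,B'}$, and $\Pi_{B,B'}$. This is precisely what the basing machinery of the preceding subsection was designed to facilitate, and once the telescoping identity above is in hand, both well-definedness and injectivity drop out from it with essentially the same one-line argument.
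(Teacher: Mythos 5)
Your proposal is correct and follows essentially the same route as the paper: both rest on the telescoping identity $\mf o_{A_0,B'}^{-1}([\theta(A_0,B')]) = \mf o_{A_0,B}^{-1}([\theta(A_0,B)]) + \mf o_{B,B'}^{-1}([\theta(B,B')])$ obtained from Lemmas \ref{L: sum} and \ref{L: sums}, and then invoke Theorem \ref{T: U equiv} together with the fact that $\mf o_{B,B'}$ is an isomorphism to get well-definedness and injectivity in one stroke. The only difference is presentational: you separate the two directions explicitly, while the paper packages them as a single ``if and only if.''
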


\begin{proof}
By Lemmas \ref{L: sum} and \ref{L: sums}, 
\[
\mf o_{A_0,C}^{-1}[\theta(A_0,C)]=\mf o_{A_0,B}^{-1}([\theta(A_0,B)])+ \mf o_{B,C}^{-1}[\theta(B,C)]\bigr).
\]

So $\mf o_{A_0,C}^{-1}[\theta(A_0,C)]=\mf o_{A_0,B}^{-1}([\theta(A_0,B)])$ if and only if $\mf o_{B,C}^{-1}[\theta(B,C)]=0$, which in turn is
true if and only if $[\theta(B,C)]=0$, because $\mf o_{B,C}$ is an isomorphism.  So, via Theorem \ref{T: U equiv}, the matrices $B$ and $C$ are 
unitarily equivalent if and only if $\mf o_{A_0,C}^{-1}[\theta(A_0,C)]=\mf o_{A_0,B}^{-1}([\theta(A_0,B)])$, whence the proposition follows. 

\end{proof}

The lemma immediately implies the following remarkable corollary:

\begin{corollary}\label{C: counting}
Given a connected CW complex $X$ and a multiplicity-free  polynomial $\mu=\mu(x, \lambda)$, the number of unitary equivalence classes of normal matrices with 
characteristic polynomial $\mu$ is less than or equal to the cardinality of $H^2(X; \Z^n_\rho)$, where $\rho$ is the representation determined by $\mu$.
In particular, if $H^2(X; \Z^n_\rho)$ is finite, there are a finite number of such equivalence classes, and if $X$ contains a countable number of cells, there are 
a countable number of such equivalence classes\footnote{The countability of unitary equivalence classes is not obvious, given that our matrix components
are $\C$-valued!}. 
\end{corollary}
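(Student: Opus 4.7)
The plan is to deduce the corollary as an immediate cardinality consequence of Proposition \ref{P: Os}. First, I would fix any normal matrix $A_0 \in M_n(C(X))$ with characteristic polynomial $\mu$; if no such matrix exists, the claim is vacuously true, so I can assume one does. Proposition \ref{P: Os} then provides a bijection between the set of unitary equivalence classes of normal matrices in $M_n(C(X))$ with characteristic polynomial $\mu$ and the subset $\mc O_{A_0} \subseteq H^2(X;\Z^n_\rho)$. The number of equivalence classes is therefore equal to $|\mc O_{A_0}| \le |H^2(X;\Z^n_\rho)|$, which is precisely the main cardinality bound asserted in the corollary.

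The finite case follows at once, since a subset of a finite group is finite. For the countability claim, I would verify that $H^2(X;\Z^n_\rho)$ is countable whenever $X$ has a countable number of cells; then $\mc O_{A_0}$, as a subset, is also countable, and so is the set of equivalence classes via the bijection from Proposition \ref{P: Os}.

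The step I expect to be the main obstacle is precisely this countability assertion: for a CW complex $X$ with countably infinitely many cells, the cellular cochain group $C^2(X;\Z^n_\rho)$ is a priori an uncountable product of copies of $\Z^n$, so the countability of $H^2$ does not follow immediately from the countability of the cell structure. A plausible remedy is to show that the image $\mc O_{A_0}$ sits inside the image of a natural map from a countable object, for instance a colimit over finite subcomplexes or the subgroup of $H^2$ represented by cochains of finite support, rather than attempting to bound all of $H^2$. Up to this subtlety, the entire corollary is a direct consequence of Proposition \ref{P: Os}, with no further obstruction-theoretic machinery required.
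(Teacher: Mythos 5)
Your derivation of the main cardinality bound is exactly the paper's argument: the paper presents Corollary \ref{C: counting} as an immediate consequence of Proposition \ref{P: Os}, with no further reasoning, so identifying the set of unitary equivalence classes with $\mc O_{A_0}\subseteq H^2(X;\Z^n_\rho)$ and comparing cardinalities is all there is to it (and the degenerate case where no normal matrix has characteristic polynomial $\mu$ is indeed harmless). The finite case is likewise immediate.

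The subtlety you flag in the final countability assertion is genuine, and the paper does not address it: it treats that clause, too, as following at once from the proposition. As you observe, a CW complex with countably infinitely many cells can have uncountable $H^2$ (already $H^2\bigl(\bigvee_{i=1}^\infty S^2;\Z\bigr)\cong\prod_{i=1}^\infty\Z$), so the bound by $|H^2(X;\Z^n_\rho)|$ does not yield countability. Your proposed remedy is unlikely to succeed: the obstruction cochain assigns to each $2$-cell the class in $\pi_1(F_0)$ of a lift over its boundary, and there is no reason for this to vanish on all but finitely many cells, so the obstruction classes need not lie in the image of the finitely supported cochains. In fact the clause appears to be false as stated: over $X=\bigvee_{i=1}^\infty S^2$ with $\mu=\lambda(\lambda-1)$, applying the construction of Example \ref{E: sphere} sphere by sphere produces, for each sequence $(k_i)\in\prod_i\Z$, a normal projection whose obstruction against the constant diagonal matrix restricts to $k_i\oplus(-k_i)$ on the $i$th sphere; by naturality these give uncountably many distinct unitary equivalence classes. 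So your proof is as complete as the paper's for everything except the countability clause, and the remaining gap there lies in the statement rather than in your argument.
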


\begin{example}
It is possible for the inequality implied by the preceding corollary to be strict. For example, if $n=1$, then a multiplicity free normal matrix in $M_1(C(X))$ is just 
a function $f:X\longrightarrow \C$, and, regardless of $H^2(X; \Z_\rho)$, the unitary equivalence class of such a matrix consists of just one element, because
$z^*f(x)z=f(x)$ for any function $z: X\longrightarrow U_1=S^1$. In fact, in this example, $\mu(x)=\lambda-f(x)$, so when $n=1$ there is a bijection between 
elements of $M_1(C(X))$ and characteristic polynomials of such matrices. 

Of course, when $H^2(X; \Z^n_\rho)=0$, for example if $X$ is a point, then equality is realized in the corollary. We will see below that there are less trivial 
examples for which the inequality is strict. 

\end{example}

\subsection{Non-CW spaces} The considerations of this section extend just as well to non-CW spaces, using the techniques of Section 
\ref{S: naturality}. 
Recall that if $Z$ is a locally compact Hausdorff space and that if $f:(Z,z_0)\longrightarrow (X,x_0)$ and $g:(X,x_0)\longrightarrow (Z,z_0)$ are homotopy inverses
to one another, then we defined $[\theta(A,B)]\in H^2(X;\Pi_{A,B})$  as $f^*([\theta(g^*A,g^*B)])$. We can define a basing here by choosing $\hat{\mf o}_{A,B}$
so that the following is a commutative diagram of isomorphisms:

\begin{diagram}
H^2(X; \Z^n_{\rho g_*})&\rTo^{\mf o_{g^*A,g^*B}} &H^2(X;g^*\Phi_{A,B}^*E_n)\\
\dTo^{f^*}&&\dTo^{f^*}\\
H^2(Z; \Z^n_{\rho g_*f_*})=H^2(Z; \Z^n_{\rho} )&\rTo^{\hat{\mf o}_{A,B}}&H^2(Z;f^*g^*\Phi_{A,B}^*E_n)\cong H^2(Z;\Phi_{A,B}^*E_n).
\end{diagram}
Here $\rho$ is the representation of $\pi_1(Z,z_0)$ on $\Phi_{A,B}^*E_n$. 
The invariant to unitary equivalence between the matrices $A$ and $B$ can then be written as either 
$f^*\mf o_{g^*A,g^*B}^{-1}([\theta(g^*A,g^*B)])$ or $\hat{\mf o}_{A,B}^{-1}f^*([\theta(g^*A,g^*B)])$ in 
 $H^2(Z; \Z^n_{\rho})$ , and this vanishes if and only if $A$ is unitary equivalent to $B$.  

Rather than go through the technicalities of translating all the results of this section from $X$ to $Z$, let us  use our existing results to show directly that versions
of Proposition  \ref{P: Os} and Corollary \ref{C: counting} hold for $Z$. Let $A,B,C\in M_n(C(X))$ be normal with the same multiplicity free characteristic 
polynomial. Using both the notation and proof of Proposition \ref{P: Os}, we see that 
$\mf o_{g^*A_0,g^*C}^{-1}[\theta(g^*A_0,g^*C)]=\mf o_{g^*A_0,g^*B}^{-1}([\theta(g^*A_0,g^*B)])$ in $H^2(X; \Z^n_{\rho g_*})$ if and only if 
$\mf o^{-1}_{g^*B,g^*C}[\theta(g^*B,g^*C)]=0$. But this implies that $f^*\mf o_{g^*A_0,g^*C}^{-1}[\theta(g^*A_0,g^*C)]=f^*\mf o_{g^*A_0,g^*B}^{-1}([\theta(g^*A_0,g^*B)])$ in $H^2(X; \Z^n_{\rho })$ if and only if $f^*\mf o_{g^*B,g^*C}^{-1}[\theta(g^*B,g^*C)]=0$ (recall that $f^*$ is an isomorphism
as $f$ is a homotopy equivalence).  In other words, $B$ is unitarily equivalent to $C$ if and only if the obstruction to $A_0$ and $B$ being unitarily 
equivalent is equal to the obstruction to $A_0$ and $C$ being unitarily equivalent.  This is identically the situation that implies Proposition \ref{P: Os}, 
so the analogous conclusions hold over $Z$. A version of Corollary \ref{C: counting} follows.

\section{Relation with Chern classes}

In this section, we make some observations concerning the situation when our characteristic polynomial has a global factorization 
$\mu(x, \lambda)=\prod_{i=1}^n(\lambda-\lambda_i(x))$.  By \cite{GL}, this is equivalent to assuming that the monodromy of the roots of $\mu$ is trivial along all curves. 
In this case, if $A,B\in M_n(C(X))$ are normal and multiplicity free with characteristic polynomial $\mu$, then  $\Pi_{A,B}$ is isomorphic to the trivial $\Z^n$ bundle. 
Moreover, this implies that, for each $i$, the $\lambda_i$ eigenspaces of  $A$ and $B$ determine complex line bundles over $X$. It turns out that, in this setting, 
the obstruction $[\theta(A,B)]$ can be expressed in terms of the Chern classes of the line bundles of maps between these corresponding eigenspace bundles. 

\begin{proposition}\label{P: chern}
Suppose $A$ and $B$ in $M_n(C(X))$ are multiplicity-free normal matrices with a common characteristic polynomial that factors globally over the CW complex $X$. 
Choose eigenvalue functions $\lambda_1, \lambda_2, \dots, \lambda_n$ as described above.  For each $x$ in $X$ and $1 \leq i \leq n$,
let $P_{\lambda_i}(x)$ and $Q_{\lambda_i}(x)$ denote the projections of $\mathbb{C}^n$ onto the $\lambda_i(x)$-eigenspaces of
$A(x)$ and $B(x)$ respectively, and consider the corresponding complex line bundles $\bar P_{\lambda_i}$ and $\bar Q_{\lambda_i}$.  Then 
$[\theta(A, B)] \in H^2(X; \Z^n) = \bigoplus_{i=1}^n H^2(X; \Z)$ is equal to $\bigoplus_{i=1}^n c^1(\Hom(\bar P_{\lambda_i}, \bar Q_{\lambda_i}))$,  
where $c^1(\cdot)$ indicates the first Chern class. 
\end{proposition}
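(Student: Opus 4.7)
The plan is to exhibit a concrete splitting of the pulled-back bundle $\Phi^*_{A,B}E_n$ as a fiber product of $n$ circle bundles, one for each eigenvalue function, identify each summand with the unit circle bundle of the hermitian line bundle $\Hom(\bar P_{\lambda_i}, \bar Q_{\lambda_i})$, and then invoke the standard fact that the primary obstruction to sectioning a principal $U(1)$-bundle is the first Chern class of the associated complex line bundle.

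First, I would observe that the global factorization of $\mu$ makes the assignments $x \mapsto P_{\lambda_i}(x)$ and $x \mapsto Q_{\lambda_i}(x)$ continuous, so the bijection $\sigma(x)$ appearing in $\Phi_{A,B}(x)=(\mathcal{P}(x),\mathcal{Q}(x),\sigma(x))$ is globally given by $\sigma(x)(P_{\lambda_i}(x)) = Q_{\lambda_i}(x)$, and by Proposition \ref{clean} the coefficient bundle $\Pi_{A,B}$ is trivially $\Z^n$. By Proposition \ref{fiber structure}, over each $x$ a point of the fiber of $\Phi^*_{A,B}E_n$ is determined by a unitary matrix that restricts to a unit-length linear isomorphism from $\ran P_{\lambda_i}(x)$ to $\ran Q_{\lambda_i}(x)$ for each $i$; equivalently, by an $n$-tuple of unit vectors, one in each hermitian line $\Hom(\bar P_{\lambda_i}(x), \bar Q_{\lambda_i}(x))$. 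I would assemble these fiberwise identifications into a bundle isomorphism
\[
\Phi^*_{A,B}E_n \;\cong\; \bigoplus_{i=1}^n S\bigl(\Hom(\bar P_{\lambda_i}, \bar Q_{\lambda_i})\bigr),
\]
where $S(L)$ denotes the unit circle bundle of a hermitian line bundle $L$; the continuity of this identification and its inverse is exactly the content of the local trivializations $\phi$ and $\psi$ constructed in the proof of Proposition \ref{fiber}, now using the globally defined projections rather than locally chosen ones. By Proposition \ref{P: U equiv}, $A$ and $B$ are unitarily equivalent if and only if each of these circle bundles admits a section.

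The last step is to identify the obstruction. Obstruction theory behaves additively for a Whitney/product sum of fibrations: if a fibration $E \to X$ splits as $E_1 \times_X \cdots \times_X E_n$ with fibers $F_i$, then on any $2$-cell a choice of lift over the $1$-skeleton decomposes into lifts in each factor, and the attaching loop in the fiber $F \cong \prod F_i$ projects to the attaching loop in each $F_i$, so the obstruction cochain $\theta^2$ for $E$ is the componentwise sum of the obstruction cochains of the $E_i$'s under the identification $\pi_1(\prod F_i) \cong \bigoplus \pi_1(F_i)$. Applying this to our decomposition,
\[
[\theta(A,B)] \;=\; \bigoplus_{i=1}^n [\theta_i] \;\in\; \bigoplus_{i=1}^n H^2(X;\Z),
\]
where $[\theta_i]$ is the primary obstruction to sectioning the principal $U(1)$-bundle $S(\Hom(\bar P_{\lambda_i}, \bar Q_{\lambda_i}))$. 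The classical fact that this primary obstruction equals the first Chern class of the associated complex line bundle (this is the definition of $c^1$ via the Postnikov tower of $BU(1) = K(\Z,2)$, or equivalently the fact that $c^1$ classifies hermitian line bundles) then yields $[\theta_i] = c^1(\Hom(\bar P_{\lambda_i}, \bar Q_{\lambda_i}))$, completing the identification.

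The main obstacle I anticipate is promoting the pointwise decomposition of unitaries into a genuine bundle isomorphism, i.e.\ verifying that the map $U \mapsto (\sigma(x)(P_{\lambda_i})U P_{\lambda_i})_{i=1}^n$ and its inverse are continuous in $x$ as well as in $U$. This is essentially a global version of Proposition \ref{fiber} and should reduce to the same compactness/continuity arguments used there, but writing it out carefully --- together with checking that the standard additivity of obstruction cochains for product fibrations applies verbatim to our setup --- is where the bookkeeping lives.
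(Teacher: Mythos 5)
Your proposal is correct and follows essentially the same route as the paper: both arguments exploit the global factorization to decompose the torus bundle $\Phi^*_{A,B}E_n$ into its $S^1$ factors indexed by the eigenvalue functions, reduce the obstruction to the $n$ separate obstructions for those circle bundles, and then cite the classical identification of the primary obstruction to sectioning an $S^1$-bundle with the first Chern class of the associated Hermitian line bundle. The only cosmetic difference is that you package the decomposition as a bundle isomorphism to a fiber product $S(\Hom(\bar P_{\lambda_1},\bar Q_{\lambda_1}))\times_X\cdots\times_X S(\Hom(\bar P_{\lambda_n},\bar Q_{\lambda_n}))$ together with an explicit additivity argument for the obstruction cochain, whereas the paper works via the projection maps $\kappa_i$ to the circle factors and the induced splittings of the coefficient system; these are two descriptions of the same structure (and the continuity you flag is no more of an issue than it is in the paper's version, being handled by the same arguments as in Proposition \ref{fiber} once the projections $P_{\lambda_i}$, $Q_{\lambda_i}$ are globally defined).
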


\begin{proof} For each $1 \leq i \leq n$, endow  $\bar P_{\lambda_i}$ and $\bar Q_{\lambda_i}$ with the Hermitian metrics they inherit
as subbundles of the trivial bundle $X \times \mathbb{C}^n$; this induces a Hermitian metric on $\Hom(\bar P_{\lambda_i}, \bar Q_{\lambda_i})$.  By construction, 
$[\theta(A, B)]$ is the obstruction to the existence of a section over $X$ of the torus bundle whose $S^1$ factors at a point $x$ correspond to the set
$\mc U(P_{\lambda_i}(x), Q_{\lambda_i}(x))$ of unitary matrices in
$\Hom(P_{\lambda_i}(x), Q_{\lambda_i}(x))$. Let $\mc U_i$ denote the corresponding $S^1$ bundle over $X$. 
In fact, with our assumptions, we can project each fiber of $\Phi^*_{A,B}E_n$ to the corresponding torus factor $\mc U(P_{\lambda_i}(x), Q_{\lambda_i}(x))=\mc U_{i,x}$,
and this induces a map of bundles of groups $\kappa_i$ from $\Pi_{A,B}$ to the bundle of groups $\pi_1(\mc U_{i,x})$. The maps $\kappa_i$ are projections to direct
summands over each point, and so globally due to the absence of monodromy. So, up to isomorphism, this results in cohomology maps
$\kappa_{i*}:H^2(X;\Pi_{A,B})\longrightarrow  H^2(X;\Z)$, and $\oplus_i \kappa_{i*}$ is an isomorphism  $H^2(X;\Pi_{A,B})\longrightarrow \oplus_i H^2(X;\Z^n)$. 

Now, $[\theta(A,B)]=[\theta^2(\td f^1)]$ is the obstruction to extending a section $\td f^1:X^1\longrightarrow \Phi_{A,B}^*E_n$ to $X^2$, and we see that
$\kappa_{i*}  ([\theta(A,B)])$ will be the obstruction to extending a section over $X^1$ of  the $S^1$ bundle $\mc U_i$.  This obstruction is independent of the particular 
section over $X^1$ by the same arguments employed in the proof of Theorem \ref{T: U equiv}. It only remains to observe that the obstructions to extending to $X^2$ 
sections of circle bundles over $X^1$ is the Chern class of the circle bundle $c^1(\mc U_i)$ (or, equivalently, the Chern class of the equivalent line bundle
$\Hom(P_{\lambda_i}, Q_{\lambda_i})$. But this description of the Chern classes as obstruction classes dates back to Chern's 
original paper, see \cite[Chapter III, Section 1]{Ch}; Chern assumes in this section of his paper that the base space is a complex 
manifold, but this is not essential. See also Steenrod \cite{Ste}, particularly Sections 41.2-41.4.
\end{proof}

\begin{example}\label{E: sphere}
We can now extend another example from \cite{GP}. 
Let $X= \C P^1$, and let $A$ be the normal, multiplicity free matrix
\[
A([z_1, z_2])=\frac{1}{|z_1|^2 + |z_2|^2}\begin{pmatrix}
|z_1|^2 & z_1\bar{z_2}\\
\bar{z_1}z_2 & |z_2|^2
\end{pmatrix}.
\]
The characteristic polynomial is 
\[
\mu([z_1, z_2], \lambda)=\lambda^2-\lambda=\lambda(\lambda-1),
\]
which  globally splits with constant eigenvalue functions $0$ and $1$. In fact, $A$ is the matrix that projects the trivial $\C^2$ bundle over $\C P^1$ to the 
tautological line bundle, which is the $\lambda = 1$ eigenspace bundle of $A$. As this bundle is not trivial, $A$ is not diagonalizable, by the discussion in \cite{GP}. 
Let us see, though, what else we can say about unitary equivalence classes of normal matrices on $\C P^1$ with characteristic polynomial $\mu$. 

If $B$ is any other normal matrix in $M_2(C(\C P^1))$ with characteristic polynomial $\lambda^2-\lambda$, then $B$ will similarly be a projection matrix onto a 
line subbundle of the trivial $\C^2$ bundle. Furthermore, as the polynomial globally splits, we know that any $\Pi_{A,B}$ is isomorphic to the trivial $\Z^n$ bundle
over $\C P^1$. In the discussion that follows, we will tacitly assume that we have utilized our basing procedure from Section \ref{S: OR} to identify all possible 
$H^2(X;\Pi_{A,B})$ with $H^2(X;\Z^2)$.  In this case, the maps  $m_{A,B,C*}$ become simple addition in $H^2(X;\Z^2)$. We can assume we have ordered the
eigenvalues such that  $\lambda_1=1$ and $\lambda_2=0$. 

To pick a more convenient matrix for comparison than the matrix $A$ above, let
\[
D=\begin{pmatrix}1&0\\0&0\end{pmatrix},
\]
which also has characteristic polynomial $\lambda(\lambda - 1)$.  The matrix $D$ projects the trivial $\C^2$ bundle over $\C P^1$ to a trivial $\C$ bundle 
over $\C P^1$ that is also the $\lambda=1$ eigenspace of $D$. The kernel of the projection, corresponding to the $\lambda = 0$ eigenspace bundle, is another
trivial $\C$ bundle. Denote the trivial $\C^n$ bundle by $\epsilon^n$.

Now let $B$ be an arbitrary matrix with characteristic polynomial $\lambda(\lambda - 1)$ and let $E_0$ and $E_1$ be the two eigenspace 
line bundles associated to $B$ with eigenvalues $0$ and $1$, respectively. By Proposition \ref{P: chern}, we see that $[\theta(D,B)] \in H^2(X;\Z^n)$ is equal to 
$c^1(\Hom(\epsilon^1,E_0)) \oplus c^1(\Hom(\epsilon^1, E_1)) = c^1(E_0) \oplus c^1(E_1)$, where  $c^1$ indicates the first Chern class. 
But $E_0 \oplus E_1\cong \epsilon^2$, so $0 = c^1(\epsilon^2) = c^1(E_0\oplus E_1) = c^1(E_0)+c^2(E_1)$. Thus
$[\theta(D,B) ]= c^1(E_1) \oplus -c^1(E_1) \in H^2(\C P^1) \oplus H^2(\C P^1)$. In particular, every obstruction 
$[\theta(D, B)] \in H^2(\C P^1) \oplus H^2(\C P^1)$ must have the form $\alpha\oplus -\alpha$. 

Next, let us show that any element $\alpha \oplus -\alpha \in H^2(\C P^1) \oplus H^2(\C P^1) \cong \Z\oplus \Z$ can be realized by a matrix with characteristic 
polynomial $\lambda(\lambda - 1)$. Every complex line subbundle $L$ of $\epsilon^2$ over $\C P^1$ is determined by a  map $\C P^1\longrightarrow \C P^1$ 
(in the obvious way --- a subbundle  of $\epsilon^2$ consists of a complex line in $\C^2$ over every point of $\C P^1$, which is precisely the information of a map 
$\C P^1\longrightarrow \C P^1$). In particular, the subbundle $L$ is the pullback of the tautological line bundle $\gamma^1$ over $\C P^1$, which over the point 
$[z_1, z_2] \in \C P^1$ has fiber that is the linear subspace of $\C^2$ containing $(z_1, z_2)$.  Furthermore, the first Chern class of $\gamma^1$ generates
$H^2(\C P^1)$ by \cite[Theorem 14.4]{MS}.  But $\C P^1\cong S^2$, and we know there are maps $f_k:S^2\longrightarrow S^2$ of any integer degree $k$. 
By naturality of characteristic classes, the pullback 
bundle $L_k=f^*_k\gamma^1$ must then have Chern class $k c^1(\gamma)$. Therefore, given any $k\in H^2(\C P^1)\cong \Z=\langle c^1( \gamma^1) \rangle$,
the class $k$ is the Chern class of the line bundle $L_k$, which is a subbundle of $\epsilon^2$.  Let $P_k$ be the matrix representing the projection 
operator from $\epsilon^2$ to $L_k$. Over each point, the projection has one eigenvalue equal to $1$ and one equal to $0$, so  
$P_k$ has  characteristic polynomial $\lambda^2 - \lambda$. All projections are normal operators, and the two eigenspace bundles of  
$P_k$ are $E_1=L_k$ and $E_0=L_k^{\perp}$. From our discussion just above, $[\theta(D,P_k)]=k\oplus -k\in H^2(\C P^1)\oplus H^2(\C P^1)$. 

It now follows from these computations and from Proposition \ref{P: Os}  that there are a countably infinite number of unitary equivalence classes
of normal matrices on $\C P^1$ with characteristic polynomial $\lambda(\lambda-1)$, indexed by the isomorphism classes of complex line bundles 
on $\C P^1$ or, equivalently, their Chern classes.
\end{example}

\begin{example}\label{E: 7.3}
 In this example, we construct explicitly an example of a nontrivial  ``twisted'' obstruction to unitary equivalence, i.e. a nonzero $[\theta(A,B)]$ for
 which the common characteristic polynomial has nontrivial monodromy of its roots.   

First, consider the tautological line bundle $\gamma^1$ over $\C P^1$, whose Chern class $c^1(\gamma^1)$ generates $H^2(\C P^1)\cong \Z$. We can 
consider $\gamma^1$ to be a subbundle of the trivial $\C^2$ bundle over $\C P^1$; in fact, the classifying map for $\gamma^1$ is the identity map
 $\C P^1\to \C P^1$, which assigns to each point in $\C P^1$ the complex line in $\C^2$ that it represents. Using the standard Hermitian structure 
 on $\C^2$, let $\gamma^\perp$ denote the perpendicular bundle to $\gamma^1$, and let $\nu:\C P^1\to \C P^1$ be the associated map taking
 $y\in \C P^1$ to the complex line orthogonal to the complex line represented by $y$. Then $\gamma^\perp=\nu^*\gamma^1$. As 
 $\gamma^1\oplus \gamma^\perp=\epsilon^2$, the trivial complex plane bundle, we have
 $0=c^1(\gamma^1\oplus \gamma^\perp)=c^1(\gamma^1)+c^1(\gamma^\perp)$, so $c^1(\gamma^\perp)=-c^1(\gamma^1)$. By the naturality of
 Chern classes, we see that $\nu: \C P^1\to \C P^1$ must have degree $-1$. Furthermore, $\nu$ must be a homeomorphism because every linear
 subspace of $\C^2$ has a unique orthogonal subspace. 

Let $X$ be the quotient space of $I\times \C P^1$ by the identification $(1,y)\sim (0,\nu(y))$. Notice that $X$ has the structure of a $\C P^1$ bundle 
over $S^1$. Let $p:X\to S^1$ be the projection. From the long exact sequence of the fibration, we must have $\pi_1(X)\cong \pi_1(S^1)\cong \Z$.
We can similarly construct $X\times \C^2$ as the quotient space of $I\times \C P^1\times \C^2$ by the identification $(1,y,t)\sim (0,\nu(y),t)$. Thinking of
$E=X\times \C^2$ as the trivial $\C^2$ bundle over $X$, we can identify within $E$ a ``twisted double bundle'' that assigns two linear subspaces of
$\C^2$ to each point in $X$ but such that a trip around a generating loop of $\pi_1(X)$ keeping track of these lines results in interchanging the two
subspaces. In fact, to the image of  each point $(z,y)\in I\times \C P^1$, we assign the complex line represented by $y$  and  the orthogonal subspace
to the line represented by $y$. While this is clearly well defined on $I\times \C P^1$, it is also well defined on $X$ by our construction, as the
quotient identifies two points corresponding to orthogonal lines. 

Choose a base point $z_0\in S^1$. Over $p^{-1}(z_0)\cong \C P^1$, our ``double bundle'' reduces to copies of $\gamma^1$ and $\gamma^\perp$.
Let us assign to one of these bundles one square root of $z_0$ (identifying $S^1$ with the standard unit circle in $\C$) and to the other bundle the 
other square root of $z_0$. We can continuously extend these assignments, assigning the two square roots of $z$ to the two orthogonal bundles on
$p^{-1}(z)$ for each $z\in S^1$. Of course each time we loop around the full circle, the two square roots are interchanged, but, by construction, so 
are the bundles! Therefore, we achieve a well-defined continuous global assignment $\pm\sqrt{z}$  to  the bundles  over $p^{-1}(z)$. Now, at each 
point  $x\in X$, there is a unique matrix $B(x)\in M_2(\C)$ whose eigenspaces correspond to the complex lines in $\C^2$ given by restricting our 
double bundle to $x$ and whose eigenvalues are the values in $S^1$ given by our assignment\footnote{Suppose we choose vectors $v,w$ in our
designated eigenspaces with eigenvalues $\lambda_1\neq \lambda_2$. Then the standard basis vectors can be written in terms of $v$ and $w$ 
as $e_1=av+bw$ and $e_2=cv+dw$ for some $a,b,c,d\in \C$. But then we know exactly how $B(x)$ acts on $e_1$ and $e_2$, and this 
determines uniquely our matrix.}. Because our eigenvalues and eigenvectors vary continuously, so will $B(x)$, and this gives us a matrix 
$B\in M_2(C(X))$. The eigenspaces of $B$ are orthogonal at each point, so $B$ is normal, and it is clearly multiplicity free.

Consider the matrix
\[
A = p^*\begin{pmatrix} 0 & z \\ 1 & 0 \end{pmatrix}
\]
in $M_2(C(X))$; it follows from Example \ref{E: circle} and the fact that normality is preserved by pullbacks that $A$ is normal.
The characteristic polynomial of $A$ is $\mu=\lambda^2-z$, which is the same as the characteristic polynomial of $B$.  Because $A$ is a pullback matrix, 
the eigenspace bundles of the restriction of $A$ to $p^{-1}(z_0)$ are trivial. So, if we let $A_{z_0}$ and $B_{z_0}$ denote the restrictions of
$A$ and $B$ to $p^{-1}(z_0)$, then by Proposition \ref{P: chern}, we must have 
\[
[\theta(A_{z_0}, B_{z_0})]=c^1(\Hom(\epsilon^1, \gamma^1))\oplus c^1(\Hom(\epsilon^1, \gamma^\perp))
=c^1(\gamma^1)\oplus c^1(\gamma^\perp)\in H^2(\C P^1;\Z^2).
\]
This class is non-zero, so $A_{z_0}$ and $B_{z_0}$ are not unitarily equivalent over $p^{-1}(z_0)$. It follows that $A$ and $B$ cannot be
unitarily equivalent over $X$. 

This example demonstrates that the obstruction $[\theta(A,B)]$ can  be nontrivial when there is monodromy of eigenvalues. But this example
has the following additional amusing element: the group $H^2(X;\Z)$ is trivial, so any two normal matrices over $X$ with the same characteristic
polynomial with trivial monodromy \emph{are} unitarily equivalent by Theorem \ref{T: U equiv}. So here is a space where we have obstructions to 
unitary equivalence only when nontrivial monodromy of roots occurs. 

To verify the claim that $H^2(X;\Z)=0$, recall that $X$ is a $\C P^1$ bundle over $S^1$. In the Leray-Serre spectral sequence for the cohomology 
of $X$, the only $E_2$ term that  could contribute to $H^2(X)$ and that isn't evidently trivial is $E_2^{0,2}=H^0(S^1;\mc H^2(\C P^1))$. 
Here $\mc H^2(\C P^1)$ is the local coefficient system induced by the bundle structure.
As $H^2(\C P^1)\cong \Z$ and because we form $X$ by attaching $\{0\}\times \C P^1$ and $\{1\}\times \C P^1$ by a map of degree $-1$, this bundle is the 
bundle $\Z_\rho$, where $\rho:\pi_1(S^1)\cong \Z\longrightarrow \Aut(\Z)$ takes a generator of $\pi_1(S^1)$ to the nontrivial automorphism of $\Z$. But now give 
$S^1$ the standard CW structure with one $0$-cell $e^0$ and one $1$-cell $e^1$. Then, in the universal cover $\wt S^1\cong \R$, we have a natural 
CW structure with $0$- and $1$-cells $e^0_i, e^1_i$ for all $i\in \Z$. We can assume $\bd e^1_0=e^0_1-e^0_0$. If $\eta$ is a generator of 
$\pi_1(S^1)\cong \Z$, then $\pi_1(S^1)$ acts on the cellular chain complex $C_*(\wt S^1)$ by $\eta(e^j_i)=e^j_{i+1}$ for $j=0,1$.  The cohomology 
$H^*(S^1;\mc H^2(\C P^1))$ is then the cohomology of the cochain complex $C^*(S^1;\Z_\rho)=\Hom_{\mathbb{Z}[\Z]}(C_*(\td S^1), \Z_\rho)$, 
where we let  $\Z_\rho$ denote $\Z$ with the stated action as a $\pi_1(S^1)$ module. 

Let $f_a$ be the $0$-cochain such that $f_a(e^0_0)=a$. From the module structure, all elements of $C^0(S^1;\Z_\rho)$ have this form. 
We compute 

\begin{align*}
(df_a)(e^{1}_0)&=-f_a(\bd e^1_0)\\
&=-f_a(e^0_{1}-e^0_0)\\
&=-f_a(\rho e_0^0-e^0_0)\\
&=-(\rho f_a(e_0^0)-f_a(e_0^0))\\
&=-(\rho (a)-a)\\
&=-(-a-a)\\
&=2a;
\end{align*}
in the first line we follow the sign convention for coboundary operators determined by  \cite[Definition 10.1] {Do}.
Therefore, $df_a=0$ only if $f_a=0$. Thus there are no nontrivial cocycles in $C^0(S^1;\Z_\rho)$ and $H^0(S^1;\Z_\rho)=0$, as claimed.
\end{example}

\section{Further questions}

Our work here raises or leaves unanswered several questions for future research:

\begin{itemize}
\item In Section 7, we showed that if the common characteristic polynomial of multiplicity-free normal matrices of $A$ and $B$ globally factors into 
linear factors, then we can write our obstruction $[\theta(A,B)]$ in terms of the first Chern classes of the bundles $\Hom(E_i,F_i)$, where $E_i$ and $F_i$ are the respective eigenspace bundles of $A$ and $B$ with the same eigenvalue. This raises the question: more generally, when can we compute the obstruction $[\theta(A,B)]$ in terms of other known invariants? Similarly, are there effective computational algorithms for determining when $[\theta(A,B)]=0$, given $A$ and $B$?

\item We  also saw in Section 7, particularly in Examples \ref{E: sphere} and \ref{E: 7.3}, that not every element of $H^2(X; \Z^n_\rho)$ can be realized as an obstruction class $[\theta(A,B)]$. So, which cohomology classes can be realized as obstructions? By Proposition \ref{P: Os}, an answer to this question would determine the number of unitary equivalence classes with a given multiplicity-free characteristic polynomial. 

\item What can be said about normal matrices that are not multiplicity free? Such matrices are nongeneric, in the sense that any such matrix can be made multiplicity free by an arbitrarily small (in your favorite reasonable sense) perturbation.  As our example in the introduction suggests, non-multiplicity-free normal matrices turn out to be much more complicated than multiplicity-free ones, even if the underlying topological space is contractible.  Therefore the algebraic topological methods that we employ in this paper are unlikely to shed much light on non-multiplicity-free normal matrices, and thus other techniques, perhaps involving algebraic geometry, will be needed.

\item What is $H^2(B_n; \mathbb{Z})$?  A concrete description of this group might shed light on our obstruction.  Also, what additional information
can be discovered about the fiber bundles $p: E_n \longrightarrow B_n$?   For example, is there a structural group and an associated principal bundle?
\end{itemize}

\bibliographystyle{amsplain}

\begin{thebibliography}{Park}

\bibitem{Ch} S. S. Chern, \lq\lq Characteristic classes of Hermitian manifolds,\rq\rq\ Ann. of Math. (2) {\bf 47} (1946), 85--121.

\bibitem{Do} A. Dold, \emph{Lectures on Algebraic Topology},  Springer--Verlag, New York, 1972.

\bibitem{DK} J. F. Davis and P. Kirk, \emph{Lecture Notes in Algebraic Topology}, vol. 35, Amer. Math. Soc., Providence,
RI, 2001.

\bibitem{GL} E. A. Gorin and V. J. Lin, \lq\lq Algebraic equations with continuous coefficients and
some problems of the algebraic theory of braids,\rq\rq\  Math. USSR-Sb. {\bf 7} (1969), no. 4, 569--596.

\bibitem{GP} K. Grove and G. K. Pedersen, \lq\lq Diagonalizing matrices over $C(X)$,\rq\rq\ 
J. Funct. Anal. {\bf 59} (1984), no. 1, 65--89.

\bibitem{Ha} Allen Hatcher, \emph{Algebraic topology}, Cambridge University Press, Cambridge,
  2002.

\bibitem{Kad2} R. V. Kadison, \lq\lq Diagonalizing matrices,\rq\rq\  Amer. J. Math. {\bf 106} (1984), no. 6, 1451--1468.

\bibitem{MS} John~W. Milnor and James~D. Stasheff, \emph{Characteristic classes}, Princeton
  University Press, Princeton, N. J., 1974, Annals of Mathematics Studies, No.
  76. 
  
\bibitem{MK} J. R. Munkres, \emph{Elements of Algebraic Topology}, Addison--Wesley, Reading, MA, 1984.

\bibitem{MK2} James R. Munkres, \emph{Topology: Second Edition}, Prentice Hall, Upper Saddle River, NJ, 2000.

\bibitem{Park} E. Park, \emph{Complex Topological K-Theory}, Cambridge Studies in Advanced
Mathematics, vol. 111, Cambridge Univ. Press, Cambridge, 2008.

\bibitem{Spanier} Edwin H. Spanier, \emph{Algebraic Topology}, Springer-Verlag, New York, 1966 

\bibitem{Ste}N. Steenrod, \emph{The Topology of Fibre Bundles}, Princeton University Press, Princeton, NJ, 1951.

\bibitem{Wh} G. W. Whitehead, \emph{Elements of Homotopy Theory}, Springer--Verlag, New York, 1978.

\end{thebibliography}

\vskip 24pt

\parindent 0pt
Greg Friedman \newline
Box 298900 \newline
Texas Christian University\newline
Fort Worth, TX 76129\newline
g.friedman{$@$}tcu.edu

\vskip 24pt
Efton Park \newline
Box 298900 \newline
Texas Christian University\newline
Fort Worth, TX 76129\newline
e.park{$@$}tcu.edu

\end{document}